\numberwithin{equation}{section}
\theoremstyle{definition}
\theoremstyle{plain}
\newtheorem{theorem}{Theorem}[section]
\newtheorem{proposition}[theorem]{Proposition}
\newtheorem{corollary}[theorem]{Corollary}
\newtheorem{lemma}[theorem]{Lemma}
\newcommand{\CC}{\mathbf{C}}
\newcommand{\HH}{\mathbf{H}}
\newcommand{\RR}{\mathbf{R}}
\newcommand{\QQ}{\mathbf{Q}}
\newcommand{\ZZ}{\mathbf{Z}}
\newcommand{\A}{\mathcal{A}}
\newcommand{\cM}{\mathcal{M}}
\newcommand{\hh}{\mathfrak{h}}
\begin{document}

\title{Cohomology of the hyperplane complement of a quaternionic reflection group}

\author{Stephen Griffeth}

\author{David Guevara}

\address{Instituto de Matem\'aticas \\
Universidad de Talca  \\}

\begin{abstract}
We compute the graded rank of the cohomology of the hyperplane complement associated with a quaternionic reflection group, and observe that it factors into irreducible factors with positive integer coefficients. For an irreducible group, these irreducible factors are all linear except at most one irreducible quadratic factor, which occurs for precisely three of the exceptional groups. 
\end{abstract}

\thanks{We thank Gwyn Bellamy, Gerhard Röhrle, Johannes Schmitt, Don Taylor, and Ulrich Thiel for comments on a preliminary version of this article.}

\maketitle

\section{Introduction} 

\subsection{Reflection groups} Each compact Lie group $K$ with maximal torus $T$ produces a \emph{Weyl group} $W$, which is a finite linear group $W=N_K(T)/T$ acting on the dual of the Lie algebra of $T$ and stabilizing the lattice of characters of $T$. This representation of the Weyl group is in particular defined over $\QQ$, and one shows that $W$ is generated by the reflections it contains. In this way compact Lie groups are related to reflection groups defined over $\QQ$, with many of the properties of $K$ encoded by the corresponding Weyl group $W$. Extending scalars from $\QQ$ to $\RR$ and considering those finite $\RR$-linear groups generated by reflections produces the class of finite Coxeter groups, and further enlarging the class of groups under study to allow a group $W$ generated by reflections acting on a $\CC$-vector space $V$ results in complex reflection groups, characterized as those finite linear groups whose corresponding quotient space $V/W$ is smooth. The sequence $\QQ \subseteq \RR \subseteq \CC$ of fields may be extended to $$\QQ \subseteq \RR \subseteq \CC \subseteq \HH$$ by considering the division ring $\HH$ of quaternions, and while the resulting class of reflection groups is far less well-studied than the class of complex reflection groups, it has recently developed (see \cite{CaGr}) that the natural generality for the study of diagonal coinvariant rings of complex reflection groups seems to be in the context of these \emph{quaternionic reflection groups} (also known as \emph{symplectic reflection groups}). The purpose of this paper is to explore another aspect of the theory of complex reflection groups that, quite mysteriously, also extends to quaternionic reflection groups. Specifically, we compute the Poincar\'e polynomials of the hyperplane complements for quaternionic reflection groups and observe that they exhibit some extremely striking behavior similar to the case of complex reflection groups (as first proved in \cite{OrSo}): they factor into irreducible (in $\ZZ[t]$) factors of degree at most two with positive integer coefficients. Our calculation relies on Cohen's classification \cite{Coh} of quaternionic reflection groups (see also \cite{Tay} and \cite{Wal}). Specifically, we show that for the infinite family $W_n(\Gamma,\Delta)$ (see \ref{infinite family} for the definition), the hyperplane complement is an iterated fiber bundle, while for the exceptional groups we compute the structure of the lattice of parabolic subgroups explicitly enough to determine the Poincar\'e polynomial. This calculation should also be of use in questions having to do with the structure of the class of parabolic subgroups, for instance in the study of symplectic reflection algebras. 

\subsection{Numerical invariants from polynomial invariants and the coinvariant ring} Given a complex reflection group $W$ with reflection representation $\hh$, the ring $\CC[\hh]^W$ of $W$-invariant polynomial functions on $\hh$ is generated by $n=\mathrm{dim}(\hh)$ algebraically independent homogeneous polynomials of degrees $d_1 \leq d_2 \leq \cdots \leq d_n$. This sequence $d_1 \leq d_2 \leq \cdots \leq d_n$ is independent of the choice of generators, and the integers $d_1,\dots,d_n$ are called the \emph{degrees} of $W$. The ring $\CC[\hh]^W$ is the ring of functions on the quotient scheme $\hh/W$, and the \emph{coinvariant ring} of $W$ is the ring $\CC[\pi^{-1}(0)]$ of functions on the scheme-theoretic fiber $\pi^{-1}(0)$ of the quotient map $\pi: \hh \to \hh/W$. The structure of the graded $W$-module $\CC[\pi^{-1}(0)]$ provides another family of numerical invariants of $W$: upon forgetting the grading it is isomorphic to the regular representation $\CC W$ and so for each irreducible $\CC W$-module $E$ there is a well-determined sequence $m_1 \leq m_2 \leq \cdots \leq m_p$ of $p=\mathrm{dim}(E)$ non-negative integers known as the \emph{exponents} of $E$ with $E$ occurring in degrees $m_1, m_2,\dots,m_p$ in $\CC[\pi^{-1}(0)]$. In particular, the representation $\hh$ appears in degrees $d_1-1,d_2-1,\dots,d_n-1$ in $\CC[\pi^{-1}(0)]$, and the numbers $e_1=d_1-1,\dots,e_n=d_n-1$ are known simply as the \emph{exponents} of $W$. The \emph{co-exponents} of $W$ are the integers $1=f_1 \leq f_2 \leq \cdots \leq f_n$ so that $\hh^*$ appears in degrees $f_1,f_2,\dots,f_n$ in $\CC[\pi^{-1}(0)]$. If $\hh \cong \hh^*$, or in other words if $W$ is a real reflection group, then the exponents and co-exponents coincide.

\subsection{Exponents and co-exponents via topology of the hyperplane complement and the combinatorics of the intersection lattice} These numerical invariants arise in another context. Namely, given a complex reflection group $W$ we define two polynomials
$$c_W(t)=\sum_{w \in W} t^{\mathrm{codim}(\mathrm{fix}(w))}$$ and
$$p_W(t)=\sum_{d=0}^\infty \mathrm{dim}(H^d(\mathcal{M}_W)) t^d,$$ where $\mathcal{M}_W$ is the hyperplane complement for the group $W$, obtained by removing all its reflecting hyperplanes from the space $V$ on which it acts. These polynomials factor in terms of the exponents and coexponents of $W$:
$$c_W(t)=\prod_{i=1}^n (1+e_i t) \quad \text{and} \quad p_W(t)=\prod_{i=1}^n (1+f_i t).$$ Thus if $W$ is a real reflection group, we have $p_W(t)=c_W(t)$. Both of these definitions still make sense for a quaternionic reflection group, and the purpose of this paper is to point out that while the algebraic definitions of the exponents and coexponents no longer work, the polynomials $c_W(t)$ and (especially!) $p_W(t)$ exhibits fascinating and mysterious behavior very similar to the case of a complex reflection group.  

\subsection{The codimension and Poincar\'e polynomials for a quaternionic reflection group} Given a quaternionic reflection group $W$ with reflection representation $V$, we search for numerical invariants. The paper \cite{CaGr} introduced three variants of the Coxeter number $h$ of an irreducible real reflection group, $g=2N/n,$ $h=(N+N^*)/n$, and $k=2N^*/n$, where $N$ is the number of reflections, $N^*$ is the number of reflecting hyperplanes and $n$ is the rank. Here we compute the ranks of the cohomology groups with coefficients in $\ZZ$ of the  hyperplane complement $V^\circ$ associated with $W$, with the hope of discovering an analog of the coexponents. 

We encode this information in the Poincar\'e polynomial $p_W(t)$, which is the generating function for these ranks:
$$p_W(t)=\sum_{d=0}^\infty \mathrm{rk}\left(H^{3d}(V^\circ,\ZZ) \right) t^d.$$ We note that the cohomology groups are always free $\ZZ$-modules, and are zero in degrees not divisible by $3$ (essentially because the $3$-sphere $S^3$ has these properties), so $p_W(t)$ contains all the information about the cohomology groups up to isomorphism. We also define the codimension generating function $c_W(t)$ just as in the complex case (but taking $\HH$-codimension).

Our main theorem is the computation of these polynomials for all quaternionic reflection groups. Since they are multiplicative, $p_{W_1 \times W_2}=p_{W_1} p_{W_2}$ and $c_{W_1 \times W_2}=c_{W_1} c_{W_2}$, it suffices to compute them for irreducible groups, which we do using Cohen's \cite{Coh} classification. We may assume our group $W$ is not complex.

\newpage

\begin{theorem} \label{main theorem}

\begin{itemize} 
\item[(a)] For a quaternionic group $W$ of rank $2$ containing $N$ reflections and $N^*$ reflecting hyperplanes, $$p_W(t)=(1+t)(1+(N^*-1)t) \quad \text{and} \quad c_W(t)=1+Nt+(|W|-N-1)t^2.$$ 

\item[(b)] For a positive integer $n \geq 3$, a finite non-abelian subgroup $\Gamma$ of $\HH^\times$ with order $|\Gamma|=m$, and a normal subgroup $\Delta$ of $\Gamma$ such that $\Gamma/\Delta$ is abelian, the Poincar\'e polynomial $p_W(t)$ of $W_n(\Gamma,\Delta)$ is
$$p_W(t)=\left(1+t \right) \left(1+(1+m)t \right) \left(1+(1+2m)t \right)\cdots\left(1+(1+(n-1)m)t \right),$$ and the polynomial $c_W(t)$ is
$$c_W(t)=\left(1+(m-1)t \right)\left(1+(2m-1)t \right) \cdots \left(1+((n-1)m-1)t \right) \left(1+(n p-1)t \right),$$ where $p=|\Delta|$ is the order of $\Delta$. 
\item[(c)] For the exceptional irreducible quaternionic groups of rank at least $3$ the Poincar\'e polynomials and codimension generating functions are given in the tables below.
\end{itemize}
\end{theorem}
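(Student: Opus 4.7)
The plan is to treat the three parts with different techniques. For parts (a) and (b) the Poincar\'e polynomial comes from an explicit fiber-bundle decomposition of $V^\circ$, while for part (c) we compute it combinatorially from the intersection lattice via a Goresky--MacPherson/Bj\"orner--Ziegler-type formula. In every case $c_W(t)$ is obtained by directly enumerating group elements, organized by the $\HH$-dimension of their fixed subspaces.

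For part (a), the key observation is that $V \setminus \{0\}$ is the total space of the principal $\HH^\times$-bundle over $\HH P^1 \cong S^4$ given by right scalar multiplication, and this restricts to a principal bundle over $S^4 \setminus \{p_1, \ldots, p_{N^*}\}$ (the $p_i$ representing the reflecting hyperplanes) whose total space is $V^\circ$. Since the base is homotopy equivalent to a wedge of $(N^*-1)$ copies of $S^3$, its $H^4$ vanishes, so the principal bundle is trivial; K\"unneth then yields $p_W(t) = (1+t)(1+(N^*-1)t)$. For $c_W$, in rank two the fixed subspace of any $w \in W$ is either $V$ (the identity), a reflecting hyperplane (one of the $N$ reflections), or only $\{0\}$ (the remaining $|W|-1-N$ elements), producing $c_W(t) = 1+Nt+(|W|-1-N)t^2$.

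For part (b), I would establish a Fadell--Neuwirth-style fibration $V_n^\circ \to V_{n-1}^\circ$ given by projection onto the first $n-1$ coordinates; the fiber over a point $(y_1,\ldots,y_{n-1}) \in V_{n-1}^\circ$ is $\HH$ with the $1+(n-1)m$ points $0$ and $\gamma y_i$ (for $\gamma \in \Gamma$, $1 \leq i \leq n-1$) removed, distinctness being exactly the defining condition on $(y_1,\ldots,y_{n-1})$. Since hyperplane complements in $\HH^k$ are $2$-connected (they are obtained from a Euclidean space by removing real-codimension-$4$ subspaces), the cohomology local system on the base is constant, and a direct inspection of bidegrees shows that no differential $d_r$ with $r \geq 2$ in the Serre spectral sequence can be nonzero; it collapses to give the multiplicative recursion $p_{W_n}(t) = p_{W_{n-1}}(t)(1+(1+(n-1)m)t)$, which together with the base case $p_{W_1}(t) = 1+t$ produces the full formula. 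For $c_W$ I would organize the elements of $W_n(\Gamma,\Delta)$ by cycle structure of the underlying permutation $\sigma \in S_n$: each cycle contributes one $\HH$-dimension to the fixed space precisely when the product of its attached $\Gamma$-entries equals $1$, and a careful induction on $n$ which treats the cycle containing $n$ separately (since the global product is constrained to lie in $\Delta$) produces the claimed product, with the distinguished last factor $(1+(np-1)t)$ reflecting that constraint.

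Part (c) is the main obstacle and is essentially a case-by-case computation for each exceptional group on Cohen's list. The plan is to describe the intersection lattice $\cL$ of the hyperplane arrangement, equivalently the poset of parabolic subgroups, explicitly enough to compute the M\"obius function at every element, and then to read off $c_W(t) = \sum_{w \in W} t^{\mathrm{codim}(\mathrm{fix}(w))}$ by enumeration and $p_W(t) = \sum_{X \in \cL} |\mu(\hat{0}, X)| t^{\mathrm{codim}(X)}$ from the M\"obius numbers. The validity of the second formula (and with it the concentration of cohomology in degrees divisible by $3$ and freeness over $\ZZ$) is justified inductively: every proper interval $[\hat{0}, X]$ in $\cL$ is itself the intersection lattice of a lower-rank quaternionic reflection arrangement already handled in parts (a) and (b) or at an earlier stage of the case-by-case argument. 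The bulk of the work is identifying these lattices from Cohen's generating data, which I would approach by combining explicit matrix computations with reductions to the already-known lower intervals; the observed factorization pattern---linear factors together with at most one irreducible quadratic factor, appearing for precisely three exceptional groups---then emerges as an output of the tables rather than being proved a priori.
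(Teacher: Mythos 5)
Your overall architecture matches the paper's: the Poincar\'e polynomial for the infinite family via a Fadell--Neuwirth fibration whose Serre spectral sequence collapses for degree reasons (identical to the paper, including the identification of the fiber as $\HH$ minus $1+(n-1)m$ points), and part (c) by a case-by-case determination of the intersection lattice and its M\"obius function, with $p_W(t)=\sum_X |\mu(X)|t^{\mathrm{rk}(X)}$. (The paper justifies that formula by a deletion--restriction exact sequence that splits automatically because the quaternionic degrees are incompatible, rather than by Goresky--MacPherson; both are legitimate, though your remark that ``every proper interval is already handled'' is not really what makes the formula valid --- what matters is the lattice-theoretic recursion for $\mu$ matching the topological recursion, or geometricity of the lattice if you invoke Goresky--MacPherson.) Your part (a) is a genuinely different and rather pretty route: the paper just reads $p_W(t)=1+N^*t+(N^*-1)t^2$ off the rank-two intersection lattice, whereas you trivialize the principal $\HH^\times$-bundle over $\HH P^1$ minus $N^*$ points; this works because the base is a $3$-complex and $B\HH^\times=\HH P^\infty$ is $3$-connected, and it gives some geometric insight into where the factor $(1+t)$ comes from. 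The $c_W$ computations in part (a) and the tables in part (c) agree with the paper.

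The one genuine gap is the codimension generating function in part (b) for a proper normal subgroup $\Delta\lneqq\Gamma$. Your cycle-product criterion for $\dim\mathrm{fix}(w)$ is correct and is also the paper's starting point, but the asserted ``careful induction on $n$ which treats the cycle containing $n$ separately'' is not carried out, and it is not clear it can be: the ratio $c_{W_n(\Gamma,\Delta)}(t)/c_{W_{n-1}(\Gamma,\Delta)}(t)$ is not a polynomial in general, so there is no clean one-step recursion of the kind that works for $\Delta=\Gamma$ (where the paper uses a coset decomposition $w=xv$ with additive fixed-space codimensions --- a decomposition the paper explicitly notes has ``no chance of working'' for general $\Delta$, since not every element is a product of at most $n$ reflections). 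The paper closes this gap differently: it shows that the number of elements with fixed space of a given dimension depends only on $n$, $d$, $|\Gamma|$, and $|\Delta|$ (the key point being that the number of ways to factor $g\in G$ as an ordered product of $k$ non-identity elements depends only on $|G|$, $k$, and whether $g=1$), and then reads the answer off the known formula for the complex groups $G(\ell,\ell/p,n)$ obtained by taking $\Gamma$ cyclic. You would need either to reproduce an argument of this kind or to supply an honest direct evaluation of the cycle-type sum; as written, this step of your proposal is an unproved assertion.
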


\begin{table}[h]
\caption{Poincar\'e polynomials for the exceptional groups of rank at least $3$}
\centering
\begin{tabular}{l c c rrrrrrr}
\hline \hline
Group $W$  &\multicolumn{1}{c}{Poincar\'e polynomial $p_W(t)$}
\\ [0.5ex]
\hline 
&$1+63t+987t^2+925t^3$ \\[-1ex]
\raisebox{1.5ex}{$W(Q)$} &   $=(1+t)(1+25t)(1+37t)$ \\[1ex]

&$1+315t+23667t^2+23353t^3$ \\[-1ex]
\raisebox{1.5ex}{$W(R)$} &   $=(1+t)(1+121t)(1+193t)$ \\[1ex]

&$1+36t+438t^2+1924t^3+1521t^4$ \\[-1ex]
\raisebox{1.5ex}{$W(S_1)$} &   $=(1+t)(1+9t)(1+13t)(1+13t)$ \\[1ex]

&$1+72t+1722t^2+14176t^3+12525t^4$ \\[-1ex]
\raisebox{1.5ex}{$W(S_2)$} &   $=(1+t)(1+25t)(1+46t+501t^2)$ \\[1ex]

&$1+180t+10326t^2+195220t^3+185073t^4$ \\[-1ex]
\raisebox{1.5ex}{$W(S_3)$} &   $=(1+t)(1+49t)(1+130t+3777t^2)$ \\[1ex]

&$1+180t+10614t^2+207892t^3+197457t^4$ \\[-1ex]
\raisebox{1.5ex}{$W(T)$} &   $=(1+t)(1+61t)(1+118t+3237t^2)$ \\[1ex]

& $1+165t+10010t^2+265210 t^3+2657589t^4+2402225t^5$ \\[-1ex]
\raisebox{1.5ex}{$W(U)$} &   $=(1+t)(1+25t)(1+37t)(1+49t)(1+53t)$ \\[1ex]

\hline 
\end{tabular}
\label{tab:PPer}
\end{table}

\begin{table}[h]
\caption{Codimension generating functions for the exceptional groups of rank at least $3$}
\centering
\begin{tabular}{l c c rrrrrrr}
\hline \hline
Group $W$  &\multicolumn{1}{c}{Codimension generating function $c_W(t)$}
\\ [0.5ex]
\hline 
&$1+63t+1239t^2+10793t^3$ \\[-1ex]
\raisebox{1.5ex}{$W(Q)$} & Irreducible    \\[1ex]

&$1+315t+27447t^2+1181837t^3$ \\[-1ex]
\raisebox{1.5ex}{$W(R)$} & Irreducible   \\[1ex]

&$1+36t+438t^2+2180t^3+4257t^4$ \\[-1ex]
\raisebox{1.5ex}{$W(S_1)$} &   $=(1+11t)(1+25t+163t^2+387t^3)$ \\[1ex]

&$1+72t+1722t^2+16496t^3+64653t^4$ \\[-1ex]
\raisebox{1.5ex}{$W(S_2)$} &   $=(1+23t)(1+49t+595t^2+2811t^3)$ \\[1ex]

&$1+180t+10974t^2+272420t^3+3034185t^4$ \\[-1ex]
\raisebox{1.5ex}{$W(S_3)$} &   $=(1+71t)(1+109t+3235t^2+42735t^3)$ \\[1ex]

&$1+180t+10614t^2+244628t^3+2336577t^4$ \\[-1ex]
\raisebox{1.5ex}{$W(T)$} &   $=(1+59t)(1+121t+3475t^2+39603t^3)$ \\[1ex]

& $1+165t+10010t^2+279290t^3+3658149t^4+23423905t^5$ \\[-1ex]
\raisebox{1.5ex}{$W(U)$} &   Irreducible \\[1ex]
\hline 
\end{tabular}
\label{tab:PPer}
\end{table}

\newpage

The polynomials $p_W(t)$ factor into irreducible factors with positive integer coefficients, all of which are linear except for a single quadratic factor for the three exceptional groups of type $S_2$, $S_3$, and $T$. The factor $(1+t)$ that appears in all of them occurs simply because these arrangements are central, but the other linear factors are more mysterious. It would be very interesting to explain these phenomena via some structural property shared by all quaternionic reflection groups. The polynomials $c_W(t)$ also factor into irreducible factors with positive integer coefficients, and share (to a lesser extent) this tendency to have linear factors, but this is perhaps less surprising (a partial analog of the phenomenon in \ref{c product formula} could explain it).

Our proof of Theorem \ref{main theorem} occupies the remainder of the paper. Part (a) follows immediately from the definitions and the material in the next section. We prove parts (b) and (c) in a case by case fashion. By Cohen's classification theorem \cite{Coh}, every irreducible quaternionic reflection group is either a complex reflection group or covered by our theorem. We have recently learned that the graded dimension of the $W$-invariant part of the cohomology has also been calculated by Giordani-Röhrle-Schmitt \cite{GRS}.

\section{Topological preliminaries: the cohomology of a quaternionic hyperplane complement}

\subsection{Summary} The purpose of this section is to collect the basic facts about hyperplane complements we will use, in the form we will use them. Most of this has appeared in \cite{Sch}. For us the important point is that, just as for complex arrangements, the Poincar\'e polynomial of the complement of a quaternionic hyperplane arrangement may be computed in terms of the M\"obius function of the intersection lattice of the arrangement. The proof of this is actually easier than the corresponding fact for complex arrangements: the splitting of the long exact sequence for a complex arrangement equipped with a distinguished hyperplane is automatic for dimension reasons in the quaternionic case, as we explain below. In fact, it seems likely that one could turn this around, give an \emph{a priori} proof that the cohomology of the quaternionification of a complex hyperplane complement is isomorphic (up to changing the grading) to the cohomology of the complex complement, and thereby obtain a more efficient proof of the results in the complex case by use of the quaternionic case.

\subsection{Setup} We write $\HH$ for the division ring of quaternions, which is the $\RR$-vector space with basis $1,i,j,k$ and $\RR$-linear multiplication determined by
$$i^2=j^2=k^2=ijk=-1.$$ We write $\overline{h}=a-bi-cj-dk$ for the conjugate of a quaternion $h=a+bi+cj+dk \in \HH$ with $a,b,c,d \in \RR$, and $\HH^\times$ for the group of non-zero quaternions.

We fix a finite-dimensional right $\HH$-vector space $V$. Let $\A$ be a finite collection of linear (that is, containing $0$) hyperplanes in $V$ (sometimes called a \emph{central} hyperplane arrangement), and let $\cM_\A$ be the corresponding hyperplane complement $$\cM_\A=V \setminus \bigcup_{H \in \A} H.$$ We are interested in the cohomology with integer coefficients of $\cM_\A$. It turns out that this is entirely controlled by the structure of the poset $L(\A)$ whose elements are the finite intersections $X=H_1 \cap \cdots \cap H_m$ of elements of $\A$, with the order $X \geq Y$ if $X \subseteq Y$. Thus $V$ (the empty intersection) is the unique minimal element of $L(\A)$, and the intersection of all the elements of $\A$ is the unique maximal element (often $\{ 0\}$) of $L(\A)$. This poset is ranked by codimension: we define $\mathrm{rk}(X)=\mathrm{codim}_V(X)$ for $X \in L(\A)$. 

\subsection{The inductive approach} Fix $H_0 \in \A$. We obtain two new arrangements: first, the arrangement $\A_0$ in the vector space $H_0$ consists of the set $H_0 \cap H$ of hyperplanes in $H_0$ obtained by intersecting $H_0$ with those $H \in \A$ such that $H_0$ is not contained in $H$. Second, the arrangement $\A^0=\A \setminus \{H_0 \}$ is obtained simply by omitting $H_0$ from $\A$. We write $\cM_0$ for the complement in $H_0$ of the arrangement $\A_0$ and $\cM^0$ for the complement in $V$ of the arrangement $\A^0$. Thus $\cM_\A$ is an open subset of $\cM^0$ with complement equal to $\cM_0$ (which is therefore a closed subset of $\cM^0$). We will use this configuration to study the arrangement $\A$ and its complement $\cM_\A$ by induction on the cardinality of $\A$.

\subsection{Cohomology} We will use only cohomology with integer coefficients. Thus for a topological space $X$ we write simply $H^d(X)=H^d(X;\ZZ)$ for the $d$th cohomology group with integer coefficients. For an abelian group $A$, we write $\mathrm{rk}(A)$ for its rank (which is a non-negative integer or $\infty$). Given a quaternionic hyperplane arrangement $\A$, we define the \emph{Poincar\'e polynomial} $p_\A(t)$ by
$$p_\A(t)=\sum_d \mathrm{rk}(H^{3d}(\cM_\A))t^d.$$ As we will see below, the cohomology groups of $\cM_\A$ are finitely generated free abelian groups and are zero except in degrees divisible by $3$, so $p_\A(t)$ contains all the information about their isomorphism classes.

\subsection{The topological side of the induction} The key observation for the topological side of this inductive approach is that $\cM_0$ has a tubular neighborhood $U$ in $\cM^0$, so that the pair $(U,U \setminus \cM_0)$ is homeomorphic to $(\cM_0 \times \HH,\cM_0 \times \HH^\times)$. Thus by excision, the K\"unneth formula, and the fact that the relative cohomology of the pair $H^d(\HH,\HH^\times)$ is zero for $d \neq 0$ and isomorphic to $\ZZ$ for $d=4$,

$$H^d(\cM^0,\cM_\A) \cong H^d(U,U \setminus \cM_0)\cong H^d(\cM_0 \times \HH,\cM_0 \times \HH^\times) \cong H^{d-4}(\cM_0) \otimes H^4(\HH,\HH^\times) \cong H^{d-4}(\cM_0).$$ 

The long exact sequence for the pair $(\cM^0,\cM_\A)$ is
$$H^d(\cM^0,\cM_\A) \to H^d(\cM^0) \to H^d(\cM_\A) \to H^{d+1}(\cM^0,\cM_\A),$$ which by the preceding isomorphism produces an exact sequence
$$H^{d-4}(\cM_0) \to H^d(\cM^0) \to H^d(\cM_\A) \to H^{d-3}(\cM_0).$$ If $\A$ contains only one hyperplane, then $\cM_\A$ is homotopic to $S^3$ and the cohomology is non-zero only in degrees $0$ and $3$. By using the exact sequence above and induction on $|\A|$, it follows that $H^d(\cM_\A)$ is non-zero only for $d$ divisible by $3$. Moreover, when $d$ is divisible by $3$, the integer $d-4$ is not, and it follows that we have an exact sequence
$$0 \to H^d(\cM^0) \to H^d(\cM_\A) \to H^{d-3}(\cM_0) \to 0.$$

Now induction shows furthermore that $H^d(\cM_\A)$ is a free $\ZZ$-module of finite rank, isomorphic to the direct sum $H^d(\cM^0) \oplus H^{d-3}(\cM_0)$ and so the Poincar\'e polynomial $p_\A(t)$ satisfies the recursion
\begin{equation} \label{p recursion}
p_\A(t)=p_{\A^0}(t)+tp_{\A_0}(t).
\end{equation}

\subsection{The combinatorial side of the induction} 

Here we simply restate the results from chapter 2 of \cite{OrTe}, the proofs of which go through without change for arrangements in a vector space over a division ring. We recall that the \emph{M\"obius function} of the intersection lattice $L(\A)$ is the function $\mu: \A \times \A \to \ZZ$ determined by the normalization $\mu(X,X)=1$ for all $X \in L(\A)$, $\mu(X,Y)=0$ unless $X \leq Y$, and for $X<Y$
$$0=\sum_{X \leq Z \leq Y} \mu(X,Z).$$ 

Just as in \cite{OrTe}, chapter 2, section 2.2, we write $\mu(X)=\mu(V,X)$ for $X \in L(\A)$. We then have $(-1)^{\mathrm{rk}(X)} \mu(X) >0$ for all $X \in L(\A)$, and defining the polynomial $\pi_\A(t)=\sum (-1)^{\mathrm{rk}(X)} \mu(X) t^{\mathrm{rk}(X)}$ we obtain the recursion
$$\pi_A(t)=\pi_{\A^0}(t)+t\pi(\A_0,t),$$ which together with the initial condition $\pi_\emptyset(t)=1$ shows that $\pi_\A(t)=p_\A(t)$. Thus
\begin{equation} \label{L formula}
\mathrm{rk}(H^{3d}(\cM_\A))=\sum_{\substack{X \in L(\A) \\ \mathrm{rk}(X)=d}} (-1)^{\mathrm{rk}(X)} \mu(X).
\end{equation} 
\subsection{The reformulation in terms of line systems}

Each hyperplane arrangement $\A$ in an $\HH$-vector space $V$ equipped with a positive definite Hermitian form produces a \emph{line system} $L$: we take $L$ to be the set of lines consisting of the lines in $V$ that are orthogonal to the hyperplanes in $\A$. The subspace lattice $S(L)$ generated by $L$ then consists of the orthogonal complements of the elements of the intersection lattice $L(\A)$. The elements of $S(L)$ are the \emph{flats} of $L$, and we refer to the elements of $S(L)$ of dimension $d$ as the \emph{$d$-flats} of $L$. 

Given an element $X \in S(L)$ we define $\mu(X)=(-1)^{\mathrm{rk}(X^\perp)} \mu(X^\perp)$. Thus \eqref{L formula} may be written
\begin{equation} \label{line system cohomology formula}
\mathrm{rk}(H^{3d}(\cM_\A))=\sum_{\substack{X \in S(L) \\ \mathrm{dim}(X)=d}} \mu(X).
\end{equation} This is the form in which we will compute the Poincar\'e polynomials for the line systems associated with the exceptional quaternionic reflection groups. 

The definition of the M\"obius function implies the following recursion for computing $\mu(X)$ for $X$ in $S(L)$: $\mu(0)=1$ and
\begin{equation} \label{mu recursion} 
\mu(X)=\sum_{\substack{Y \in S(L) \\ Y \subsetneqq X}} (-1)^{\mathrm{dim}(X)-\mathrm{dim}(Y)-1} \mu(Y),
\end{equation} where $\mathrm{dim}(X)$ is the $\HH$-dimension of the subspace $X$.

\subsection{A map to $(\HH^\times)^\A$} We will not use the remaining assertions in the body of the paper, but we include them here because they fit well with the theme. For each hyperplane $H \in \A$ we choose a linear form $\alpha_H:V \to \HH$. Together these define a map $V \to \HH^\A$ sending $v \in V$ to the point of $\HH^\A$ with coordinates $\alpha_H(v)$ for $H \in \A$, restricting to a map $\cM_\A \to (\HH^\times)^\A$ which commutes with the right $\HH^\times$-action on both. Now a space with an $\HH^\times$-action inherits a degree $-3$ differential on its cohomology algebra. In the case of $(\HH^\times)^\A$, its cohomology algebra is an exterior algebra on degree three variables $\omega_H$ in bijection with the hyperplanes $H \in \A$, and the derivation $d$ is the unique one sending $\omega_H$ to $1$ for all $H \in \A$.

The map $\cM_\A \to (\HH^\times)^\A$ induces a map on cohomology $H^*((\HH^\times)^\A) \to H^*(\cM_\A)$ that commutes with the differentials induced by the $\HH^\times$-actions.
 Moreover one checks that for any dependent set $H_1,\dots,H_p$ in $\A$, the element $\omega_{H_1} \cdots \omega_{H_p}$ is in the kernel of this map on cohomology. Since this map commutes with differentials, its kernel contains $\partial(\omega_{H_1} \cdots \omega_{H_p})$ for any dependent set $H_1,\dots,H_p$ in $\A$, where $\partial$ is the derivation induced by the $\HH^\times$-action.

\subsection{The Orlik-Solomon algebra and the algebraic side of the induction} 

 The \emph{Orlik-Solomon algebra} $A(\A)$ is the quotient of the exterior $\ZZ$-algebra on degree $3$ generators $\omega_H$ for all $H \in \A$ by the ideal generated by all $\partial(\omega_{H_1} \cdots \omega_{H_p})$ for dependent sets $H_1,\dots,H_p$ in $\A$. The defining ideal of $A(\A)$ is homogeneous, so $A(\A)$ is a graded algebra with graded pieces that we write as $A(\A)^d$, and there are exact sequences
 $$0 \to A(\A^0)^d \to A(\A)^d \to A(\A_0)^{d-3} \to 0$$ of $\ZZ$-modules. Using these and induction on $|\A|$ one proves that the natural maps $A(\A)^d \to H^d(\cM_\A)$ are isomorphisms. 

\subsection{The graded trace of an automorphism of $\A$} 

If $g$ is an automorphism of $V$ that preserves the set $\A$ of hyperplanes, then $g$ induces automorphisms of the cohomology ring of $\cM_\A$ and on the Orlik-Solomon algebra $A(\A)$, and the isomorphism $A(\A) \to H^*(\cM_\A)$ constructed above is $g$-equivariant. Just as in chapter 6 of \cite{OrTe} we obtain the graded trace of $g$ on $H^*(\cM_\A)$:
$$\mathrm{tr}(g,H^{3d}(\cM_\A))=(-1)^d \sum_{\substack{X \in L(\A)^g \\ \mathrm{rk}(X)=d}} \mu^g(V,X),$$ where $L(\A)^g$ is the poset of flats that are fixed (setwise) by $g$ and $\mu^g$ is its M\"obius function.  

\section{Reflection groups and line systems}

\subsection{Hermitian forms and angles between lines} 

Given a right $\HH$-vector space $V$, a \emph{Hermitian form} on $V$ is a function $(\cdot,\cdot):V \times V \to \HH$ such that
$$(v_1,v_2)=\overline{(v_2,v_1)} \quad \text{and} \quad (v_1,v_2a+v_3b)=(v_1,v_2)a+(v_1,v_2)b \quad \hbox{for all $v_1,v_2,v_3 \in V$ and all $a,b \in \HH$.}$$ Thus $(\cdot,\cdot)$ is $\HH$-linear in the second variable and satisfies
$$(av_1+bv_2,v_3)=\overline{a} (v_1,v_3)+\overline{b} (v_2,v_3) \quad \hbox{for all $a,b \in D$ and all $v_1,v_2,v_3 \in V$.}$$ Evidently $(v,v) \in \RR$ for all $v \in V$, and we say that the form is \emph{positive definite} if $(v,v)>0$ for all $v \in V \setminus \{ 0 \}$. 

For example, the form on $\HH^n$ given by the formula
$$((x_1,\dots,x_n),(y_1,\dots,y_n))=\sum_{i=1}^n \overline{x_i} y_i$$ is a positive definite Hermitian form. We will assume throughout that $V$ is equipped with a fixed positive definite Hermitian form. 

The \emph{Cauchy-Schwarz inequality} is
$$|(v_1,v_2)| \leq |v_1| | v_2| \quad \hbox{for all $v_1,v_2 \in V$, with equality if and only if $v_1 \HH=v_2 \HH$.}$$ This holds for any positive definite Hermitian form, and allows us to define the \emph{angle} $\alpha$ between two one-dimensional spaces $\ell$ and $\ell'$ of $V$ by 
\begin{equation} \label{angle definition}
\mathrm{cos}(\alpha)=\frac{|(v_1,v_2)|}{|v_1| |v_2|}
\end{equation} for any non-zero $v_1 \in \ell_1$ and $v_2 \in \ell_2$. 

\subsection{The general linear and unitary groups} Let $V$ be a finite-dimensional right $\HH$-vector space equipped with a positive definite Hermitian form $(\cdot,\cdot)$. The \emph{general linear group} of $V$ is the group $\mathrm{GL}(V)$ of all invertible $\HH$-linear transformations $g:V \to V$. It acts on $V$ on the left, and if the dimension of $V$ is $n$ then upon choosing a basis of $V$ may be identified with the group of all invertible $n$ by $n$ matrices with entries in $\HH$, acting on $\HH^n$ by left multiplication.  The \emph{unitary group} of $V$ is 
$$U(V)=\{g \in \mathrm{GL}(V) \ | \ (g(v_1),g(v_2))=(v_1,v_2) \ \hbox{for all $v_1,v_2 \in V$} \}.$$

\subsection{Reflection groups} Let $V$ be a right $\HH$-vector space equipped with a positive definite Hermitian form $(\cdot,\cdot)$, and let $W \subseteq U(V)$ be a finite group of unitary $\HH$-linear transformations. A \emph{reflection} in $W$ is an element $r \in W$ such that
$$\mathrm{codim}_\HH(\mathrm{fix}_V(r))=1.$$ We write $R$ for the set of all the reflections in $W$, and we say that $W$ is a \emph{reflection group} if it is generated by $R$. Given a reflection group $W$, we let
$$\mathcal{A}_W=\{ \mathrm{fix}(r) \ | \ r \in R\}$$ be the reflection arrangement for $W$, and we let
$$\mathcal{M}_W=V \setminus \bigcup_{H \in \mathcal{A}_W} H$$ be the corresponding (quaternionic) hyperplane complement. We also obtain a line system $L$ associated with $W$ whose elements are the orthogonal complements of the reflecting hyperplanes.

\subsection{The definition of $W_n(\Gamma,\Delta)$} \label{infinite family} Here we define the infinite family of reflection groups that accounts for all but $7$ of the irreducible quaternionic reflection groups in rank at least $3$. We fix a finite subgroup $\Gamma \leq \HH^\times$ of the quaternions together with a normal subgroup $\Delta \leq \Gamma$ such that the quotient group $\Gamma/\Delta$ is abelian. If $\Gamma$ is cyclic then every subgroup $\Delta$ of $\Gamma$ satisfies these properties. It is often most convenient to specify $\Delta$ by specifying the character group of $\Gamma/\Delta$, which is a subgroup of the group of linear characters of $\Gamma$. 

The group $W_n(\Gamma,\Delta)$ is the subgroup of $\mathrm{GL}_n(\HH)$ generated by the group $S_n$ of permutation matrices together with those diagonal matrices $\mathrm{diag}(\gamma_1,\gamma_2,\dots,\gamma_n)$ such that $\gamma_1,\dots,\gamma_n \in \Gamma$ and $\gamma_1 \gamma_2 \cdots \gamma_n \in \Delta$. When $\Delta=\Gamma$ we abbreviate by writing simply $W_n(\Gamma)=W_n(\Gamma,\Delta)$. For readers more familiar with complex reflection groups, we note that it $\Gamma$ is cyclic of order $\ell$ and $\Delta$ is the subgroup of $\Gamma$ of order $\ell/m$ for some divisor $m$ of $\ell$, then the group we denote $W_n(\Gamma,\Delta)$ is often written as $G(\ell,m,n)=W_n(\Gamma,\Delta)$. 

We will use the following notation: given $\gamma \in \Gamma$ and $1 \leq m \leq n$, we write $\gamma_m$ for the diagonal matrix with $\gamma$ in the $m$th position and $1$'s elsewhere. We will also identify $W_{n-1}(\Gamma)$ with the subgroup of $W_n(\Gamma)$ fixing the vector $e_n=(0,0,\dots,0,1)$. 

\subsection{Parabolic subgroups} Let $W$ be a reflection group with reflection representation $V$. A subgroup $W'$ of $W$ is \emph{parabolic} if there exists $v \in V$ such that $W'=W_v$ is the stabilizer of $v$ in $W$. It is a consequence of the work \cite{BST}of Bellamy-Schmitt-Thiel that each parabolic subgroup $W'$ of $W$ is itself a reflection group. 

Given a parabolic subgroup $W'$ of $W$, we may decompose $V$ as $V^{W'} \oplus V'$, where $V'$ is the orthogonal complement to the fixed space $V^{W'}$. We refer to $V'$ as the \emph{support} of $W'$, and the \emph{rank} of $W'$ is by definition the dimension of $V'$. In particular the rank of $W$ is the dimension of $V$ minus the dimension of $V^W$ (so it is equal to the dimension of $V$ if $W$ is non-trivial and $V$ is an irreducible representation of $W$). 

The support of $W'$ may also be described as follows: for each $H \in \mathcal{A}_W$ containing $V^{W'}$ we let $\ell_H$ be its orthogonal complement. The span of all of these lines is the support of $W'$. In this way parabolic subgroups of $W$ are in bijection with elements $X$ of the subspace lattice $S(L)$ of the line system of $W$.

\subsection{The numerical invariants $\mu$ and $e$} Given a quaternionic reflection group $W$ with reflection representation $V$, we define an element $w \in W$ to be \emph{elliptic} if $\mathrm{fix}_V(w)=\{ 0 \}$. We then define $e(W)$ to be the number of elliptic elements of $W$. Evidently we have
$$e(W)=|W|-\sum e(W'),$$ where the sum runs over all proper parabolic subgroups $W'$ of $W$, each of which we regard as acting on its support in $V$. It is also an immediate consequence of the definitions that $$e(W_1 \times W_2)=e(W_1) e(W_2).$$

The invariant $\mu(W)$ is defined to be the dimension of the top cohomology group of the hyperplane complement $\mathcal{M}_W$ of $W$. More specifically, if $W$ is a group of rank $n$ then we define
$$\mu(W)=\mathrm{dim}(H^{3n}(\mathcal{M}_W)).$$ Just as for $e(W)$ there is a recurrence determining $\mu(W)$:
$$\mu(W)=\sum (-1)^{\mathrm{rk}(W)-\mathrm{rk}(W')-1} \mu(W'),$$ where the sum runs over all parabolic subgroups $W'$ of $W$ (by using \eqref{line system cohomology formula} and the recursion for the M\"obius function). And just as for $e(W)$ the invariant $\mu(W)$ is multiplicative (by the K\"unneth formula):
$$\mu(W_1 \times W_2)=\mu(W_1) \times \mu(W_2).$$

\subsection{The codimension generating function}

We consider the generating function $$c_W(t)=\sum_{w \in W} t^{\mathrm{codim}(\mathrm{fix}(w))}.$$ Evidently only the identity has fixed space of codimension $0$. At the other extreme, if $n=\mathrm{dim}(V)$ then the coefficient of $t^n$ in $c_W(t)$ is the number $e(W)$ of elliptic elements of $W$. By \cite{BST} the fixed space of each $w \in W$ is an element $X$ of the intersection lattice: each $w \in W$ can be written as a product of reflections in (some of) the hyperplanes of the arrangement of $W$ containing its fixed space, and it follows that its fixed space is precisely the intersection of those hyperplanes of the arrangement of $W$ that contain it. 

The elements with fixed space of codimension $1$ are the reflections, and those with fixed space of codimension $2$ are the elements of a rank two parabolic subgroup that are not reflections or the identity; in other words, they are the elliptic elements of some rank two parabolic subgroup of $W$. In general, the number of elements of $W$ with fixed space of codimension $d$ is the total number of elliptic elements of rank $d$ parabolic subgroups of $W$. Thus
\begin{equation} \label{c e formula}
c_W(t)=\sum_d \left(\sum_{\mathrm{rk}(W')=d} e(W') \right)t^d,
\end{equation} where the inner sum runs over all rank $d$ parabolic subgroups $W'$ of $W$.

\subsection{Lines and angles} As above, we let $V$ be a finite-dimensional right $D$-vector space equipped with a positive definite Hermitian form $(\cdot,\cdot)$. A \emph{line} $\ell$ in $V$ is a one-dimensional subspace $\ell \leq V$. Given two lines $\ell$ and $k$, spanned by vectors $v_1$ and $v_2$, as above the angle between them is the real number $\alpha$ defined by
$$\cos(\alpha)=\frac{|(v_1,v_2)|}{|v_1| |v_2|} \quad \hbox{and $0 \leq \alpha \leq \pi/2$.}$$ 

\begin{lemma} \label{star uniqueness}
Suppose $k$ and $\ell$ are lines and the angle between them is $\pi/3$. Then there is a unique line $m$ in the plane spanned by $k$ and $\ell$ that is at angle $\pi/3$ to both $k$ and $\ell$.
\end{lemma}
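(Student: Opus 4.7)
The plan is to reduce the uniqueness statement to a scalar equation over $\HH$ by choosing normalized representatives and parameterizing lines in the plane. First I would select a unit vector $v_1$ spanning $k$ and a unit vector $v$ spanning $\ell$. The angle condition forces $|(v_1, v)| = 1/2$, so writing $(v_1, v) = (1/2)\epsilon$ with $\epsilon$ a unit quaternion and replacing $v$ by $v_2 := v \epsilon^{-1}$ (which spans the same line, since $\HH$ acts on the right), I arrange
$$(v_1, v_2) = \tfrac{1}{2} \in \RR, \qquad |v_1| = |v_2| = 1.$$
This normalization is the crucial first move: without it every subsequent inner product would carry an extra quaternionic factor that obscures the computation.

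Next I would parameterize lines in the plane. Any line other than $k$ itself has a unique spanning vector of the form $u = v_1 a + v_2$ for some $a \in \HH$, obtained by scaling a general combination $v_1 a + v_2 b$ (with $b \neq 0$) on the right by $b^{-1}$; the line $k$ is excluded since it makes angle $0$, not $\pi/3$, with itself. Using $\HH$-linearity in the second argument and conjugate-linearity in the first, I would compute
$$(v_1, u) = a + \tfrac{1}{2}, \qquad (v_2, u) = \tfrac{1}{2}a + 1, \qquad |u|^2 = |a|^2 + \mathrm{Re}(a) + 1.$$

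Finally I would impose the two squared angle-$\pi/3$ conditions $|(v_1, u)|^2 = |u|^2/4$ and $|(v_2, u)|^2 = |u|^2/4$. Expanding $|a + 1/2|^2$ and $|a/2 + 1|^2$ via $|q|^2 = q \overline{q}$ and cancelling, I would obtain the pair
$$|a|^2 + \mathrm{Re}(a) = 0, \qquad \mathrm{Re}(a) = -1.$$
These force $|a|^2 = 1$ and $\mathrm{Re}(a) = -1$; since $|a|^2 = \mathrm{Re}(a)^2 + |\mathrm{Im}(a)|^2$, the imaginary part of $a$ has zero norm and $a = -1$. The unique line is therefore $m = (v_2 - v_1)\HH$, and a quick check confirms that both angles are $\pi/3$.

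The main obstacle is purely administrative: in a right $\HH$-vector space one must keep careful track of the side from which scalars act, especially when scaling the spanning vector of $\ell$ to achieve $(v_1, v_2) \in \RR$ and when normalizing the parameterization $u = v_1 a + v_2$. With those conventions fixed, the argument is short and the quaternionic non-commutativity causes no genuine trouble because all the quantities $|u|^2$, $|(v_i,u)|^2$ and their cross terms reduce to $|a|^2$ and $\mathrm{Re}(a)$.
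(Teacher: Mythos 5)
Your proposal is correct and follows essentially the same route as the paper: parameterize a candidate vector in the plane by its coefficients over $v_1,v_2$ and solve the angle conditions to force a unique solution. Your normalization (unit vectors, $(v_1,v_2)=\tfrac12$ real, and the one-parameter form $u=v_1a+v_2$) is a mild streamlining of the paper's two-parameter computation, reducing everything to the two real equations $|a|^2+\mathrm{Re}(a)=0$ and $\mathrm{Re}(a)=-1$, but the underlying argument is the same.
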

\begin{proof}
Choose basis elements $v_1$ and $v_2$ for $k$ and $\ell$, such that $(v_1,v_1)=2=(v_2,v_2)$. Existence: by scaling $v_1$ we may assume $(v_1,v_2)=-1$. Put $v=-v_1-v_2$. Then 
$$(v,v)=(v_1,v_1)+2(v_1,v_2)+(v_2,v_2)=2-2+2=2 \quad \text{and} \quad (v,v_1)=-(v_1,v_1)-(v_2,v_1)=-2+1=-1=(v,v_2).$$ It follows that the line spanned by $v$ is at angle $\pi/3$ to $k$ and $\ell$. 

Now we prove uniqueness. Suppose $m$ is a line at angle $\pi/3$ to each of $k$ and $\ell$, and choose a basis element $v$ of $m$ such that $(v,v)=2$. By scaling, we may assume 
 $$(v_1,v_2)=-1 \quad \text{and} \quad (v_1,v)=-1.$$ Since the angle between the lines spanned by $v_2$ and $v$ is $\pi/3$ we furthermore have 
 $$|(v_2,v)|=1.$$ Write $v=v_1 a+v_2 b$ for $a,b \in \HH$. We obtain 
$$-1=(v_1,v)=(v_1,v_1 a+ v_2 b)=2a-b \quad \implies \quad b=2a+1$$ and
$$2=(v,v)=(v,v_1)a+(v,v_2)b=-a+(v,v_2)b.$$ Now 
$$(v_2,v)=-a+2b \quad \implies \quad (v,v_2)=-\overline{a}+2\overline{b} \quad \implies \quad 2=-a+(-\overline{a}+2\overline{b})b.$$ Substituting $b=2a+1$ into this gives
$$2=-a+(-\overline{a}+2 (\overline{2a+1}))(2a+1)=-a-\overline{a}-2|a|^2+2(2\overline{a}+1)(2a+1).$$ Simplifying this implies
$$a+\overline{a}=-2|a|^2.$$ 

But also
$$1=|(v_2,v)|^2=|-a+2b|^2=|-a+2(2a+1)|^2=|3a+2|^2.$$ Thus if $a=x+yi+zj+wk$ for real numbers $x,y,z,w$ then $x=\frac{a+\overline{a}}{2}=-|a|^2$ and
$$1=(3x+2)^2+9y^2+9z^2+9w^2=9(x^2+y^2+z^2+w^2)+12x+4=9|a|^2-12|a|^2+4.$$ It follows that $|a|=1$, and hence $x=-1$ and finally $a=-1$, $b=2a+1=-1$. 
\end{proof}

\subsection{Hyperplanes, and reflections of order $2$}  A \emph{hyperplane} $H$ in $V$ is a codimension-one subspace $H \leq V$. Our choice of form induces a bijection between lines and hyperplanes, given by the rule
$$\ell^\perp=\{v \in V \ | \ (v,x)=0 \quad \hbox{for all $x \in \ell$} \}.$$ Given a hyperplane $H$ in $V$, we choose a non-zero vector $\alpha$ in the line $H^\perp$ orthogonal to $H$ and define the \emph{reflection of order $2$} in $H$ by the formula
$$r_H(v)=v-\alpha \frac{2}{(\alpha,\alpha)} (\alpha,v).$$ Given a line $\ell$ we will also write $r_\ell$ for the reflection of order $2$ in the hyperplane $H=\ell^\perp$. The first part of the next lemma justifies omitting the choice of $\alpha$ from the notation for $r_H$:
\begin{lemma} \label{reflections lemma}
\begin{enumerate}
\item[(a)] The reflection $r_H$ is independent of the choice of $\alpha$.
\item[(b)] We have $(r_H(v_1),r_H(v_2))=(v_1,v_2)$ for all $v_1,v_2 \in V$.
\item[(c)] We have $r_H(v)=v$ for all $v \in H$.
\item[(d)] We have $r_H(v)=-v$ for all $v \in H^\perp$. 
\item[(e)] If $\ell$ and $k$ are lines at angle $\pi/3$ to one another, then $r_\ell(k)$ is the unique line in their span at angle $\pi/3$ to both.
\end{enumerate}
\end{lemma}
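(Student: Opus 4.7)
My plan is to handle the five parts in the order (c), (d), (a), (b), (e), since the direct substitutions come first, then the well-definedness, the isometry property, and finally the geometric consequence.

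Parts (c) and (d) are immediate from the defining formula $r_H(v)=v-\alpha\cdot \frac{2}{(\alpha,\alpha)}(\alpha,v)$: if $v\in H$ then $(\alpha,v)=0$, and if $v\in H^\perp$ then $v=\alpha q$ for some $q\in\HH$, so $(\alpha,v)=(\alpha,\alpha)q$ and the correction term becomes $\alpha\cdot \frac{2}{(\alpha,\alpha)}(\alpha,\alpha)q=2\alpha q=2v$, giving $r_H(v)=-v$. Throughout the argument it is essential to remember that $(\alpha,\alpha)\in\RR$, so it is central in $\HH$ and may be moved past $\alpha$ and past quaternionic scalars without comment; this is the one recurring technicality.

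For part (a), I would write any other nonzero vector in $H^\perp$ as $\alpha'=\alpha q$ with $q\in\HH^\times$ and compute
$$r_{H,\alpha'}(v)=v-\alpha q\cdot \frac{2}{(\alpha q,\alpha q)}\cdot(\alpha q,v)=v-\alpha q\cdot \frac{2}{\overline{q}(\alpha,\alpha)q}\cdot \overline{q}(\alpha,v).$$
Using $(\alpha,\alpha)\in\RR$ gives $(\alpha q,\alpha q)=|q|^2(\alpha,\alpha)$, which is real and commutes with everything; then $q\overline{q}=|q|^2$ cancels the $|q|^2$ in the denominator, leaving $r_H(v)$.

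For part (b), set $\lambda=2/(\alpha,\alpha)\in\RR$ and $c_i=\lambda(\alpha,v_i)$, so that $r_H(v_i)=v_i-\alpha c_i$. Expanding the form gives
$$(r_H(v_1),r_H(v_2))=(v_1,v_2)-(v_1,\alpha)c_2-\overline{c_1}(\alpha,v_2)+\overline{c_1}(\alpha,\alpha)c_2.$$
Substituting $(v_1,\alpha)=\overline{(\alpha,v_1)}$, $c_i=\lambda(\alpha,v_i)$, and pulling the real scalar $\lambda$ through shows the last three terms collapse to $-2\lambda\overline{(\alpha,v_1)}(\alpha,v_2)+2\lambda\overline{(\alpha,v_1)}(\alpha,v_2)=0$, so $(r_H(v_1),r_H(v_2))=(v_1,v_2)$. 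This (combined with non-commutativity) is the main obstacle: one has to be careful to move only real scalars across quaternions, and to invoke $\lambda^2(\alpha,\alpha)=2\lambda$ in the quartic term.

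For part (e), observe first that $r_\ell$ preserves the two-plane spanned by $k$ and $\ell$, since by (c) and (d) it sends $\ell$ to itself and sends $k$ into $\ell\oplus k$ (directly from the defining formula). Choosing unit vectors $v_1\in\ell$, $v_2\in k$, part (b) together with (d) gives $(v_1,r_\ell(v_2))=-(v_1,v_2)$, so $|(v_1,r_\ell(v_2))|=|(v_1,v_2)|$ and the angle between $\ell$ and $r_\ell(k)$ is again $\pi/3$. For the angle between $k$ and $r_\ell(k)$, a direct expansion gives $(v_2,r_\ell(v_2))=|v_2|^2-\frac{2}{|\alpha|^2}|(\alpha,v_2)|^2$; substituting $|(\alpha,v_2)|^2=|\alpha|^2|v_2|^2/4$ (the angle-$\pi/3$ condition) yields $|v_2|^2/2$, so that angle is $\pi/3$ as well. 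The uniqueness clause is then exactly Lemma \ref{star uniqueness}.
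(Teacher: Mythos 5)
Your proposal is correct and follows essentially the same route as the paper: the paper declares (a)--(d) straightforward (your computations supply exactly those routine details, with the correct care about real scalars being central in $\HH$) and proves only (e), which, like yours, reduces the uniqueness claim to Lemma \ref{star uniqueness} after checking that $r_\ell(k)$ makes angle $\pi/3$ with both lines. The only cosmetic difference is that the paper verifies the angles by normalizing $(v_1,v_1)=(v_2,v_2)=2$, $(v_1,v_2)=-1$ so that $r_\ell(v_2)=v_1+v_2$, whereas you deduce them from parts (b) and (d) plus a direct expansion of $(v_2,r_\ell(v_2))$.
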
 
\begin{proof}
This is all straightforward. We prove only (e). We may choose $v_1 \in \ell$ and $v_2 \in k$ with $(v_1,v_2)=-1$ and $(v_1,v_1)=2=(v_2,v_2)$. Then by definition
$$r_\ell(v_2)=v_2-v_1(v_1,v_2)=v_1+v_2,$$ so that $r_\ell(k)$ is at angle $\pi/3$ to both $\ell$ and $k$. By Lemma \ref{star uniqueness} we have proved (e). 
\end{proof} 

The order of the product $r_\ell r_{\ell'}$ of two reflections of order two is determined by the angle between $\ell$ and $\ell'$; the following lemma is a straightforward check.
\begin{lemma} Let $\ell$ and $\ell'$ be lines in $V$.
\begin{itemize}
\item[(a)] $r_\ell$ and $r_{\ell'}$ commute if and only if $\ell \perp \ell'$.
\item[(b)] The order of $r_\ell r_{\ell'}$ is $3$ if and only if the angle between $\ell$ and $\ell'$ is $\pi/3$.
\item[(c)] The order of $r_\ell r_{\ell'}$ is $4$ if and only if the angle between $\ell$ and $\ell'$ is $\pi/4$.
\item[(d)] The order of $r_\ell r_{\ell'}$ is $5$ if and only if the angle between $\ell$ and $\ell'$ is either $\pi/5$ or $2 \pi /5$. 
\end{itemize}
\end{lemma}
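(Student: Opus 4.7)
The plan is to reduce the computation of the order of $r_\ell r_{\ell'}$ to the classical Euclidean fact that the product of two real reflections in lines meeting at angle $\alpha$ is a rotation by $2\alpha$. First I would pick $v_1 \in \ell$ and $v_2 \in \ell'$ with $(v_1,v_1)=(v_2,v_2)=2$. Since $\ell'$ is stable under right multiplication by $\HH^\times$, I may replace $v_2$ by $v_2 u$ for a suitable unit quaternion $u$ so that $c := (v_1, v_2) = 2\cos\alpha$ lies in $\RR_{\geq 0}$. Because both reflections act trivially on the Hermitian orthogonal complement of $U := v_1\HH + v_2\HH$, the order of $r_\ell r_{\ell'}$ is determined by its restriction to $U$.

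The key step is to decompose the $8$-dimensional real vector space $U$ into four mutually orthogonal real $2$-planes $P_q := \RR v_1 q \oplus \RR v_2 q$, indexed by $q \in \{1,i,j,k\}$, where the real inner product is $\langle x,y\rangle := \mathrm{Re}(x,y)$. Using the identity $(v_a q_1, v_b q_2) = \overline{q_1}(v_a,v_b) q_2$ together with the reality of $c$, all cross terms $(v_a q_1, v_b q_2)$ for distinct $q_1, q_2 \in \{1,i,j,k\}$ are purely imaginary, so the four planes are mutually $\RR$-orthogonal. Each $P_q$ is preserved by the two reflections because they are $\HH$-linear and hence commute with right multiplication by $q$. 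On $P_q$, one checks directly that $r_\ell$ fixes the real line $\RR(v_2 - \tfrac{c}{2}v_1)q$ and negates $\RR v_1 q$, while $r_{\ell'}$ fixes $\RR(v_1 - \tfrac{c}{2} v_2)q$ and negates $\RR v_2 q$; a brief computation using $c = 2\cos\alpha$ shows that these two fixed lines meet at angle $\alpha$ inside $P_q$.

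Hence $r_\ell r_{\ell'}$ acts on each $P_q$ as a planar rotation by $2\alpha$, so its order equals the order of rotation by $2\alpha$, namely the smallest positive $n$ with $n\alpha \in \pi\ZZ$. Writing $\alpha = k\pi/n$ with $\gcd(k,n)=1$ and enumerating under the constraint $\alpha \in [0,\pi/2]$ yields all four parts of the lemma: (a) is the case $\alpha = \pi/2$, together with the observation that two involutions commute if and only if their product has order at most $2$; (b)-(d) are the cases $n = 3,4,5$, with the admissible values $k = 1$ and $k = 2$ in (d) yielding both angles $\pi/5$ and $2\pi/5$. The one genuinely quaternionic step is the orthogonal decomposition $U = \bigoplus_q P_q$; once the reduction to $c \in \RR_{\geq 0}$ has set it up, the rest is classical planar Euclidean geometry.
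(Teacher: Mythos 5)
Your argument is correct, and it fills in a step the paper itself does not write out: the paper dismisses this lemma with the remark that it is ``a straightforward check,'' so there is no proof to compare against line by line. Your reduction is exactly the right way to make the check rigorous. The normalization $(v_1,v_2)=2\cos\alpha\in\RR_{\geq 0}$ (possible because a line is a right $\HH$-subspace, so $v_2$ may be replaced by $v_2u$ with $|u|=1$, and $(v_1,v_2u)=(v_1,v_2)u$) is the key move: once the inner product is real, the identity $(v_aq_1,v_bq_2)=\overline{q_1}(v_a,v_b)q_2$ shows the four real planes $P_q=(\RR v_1\oplus\RR v_2)q$ are mutually orthogonal for $\mathrm{Re}(\cdot,\cdot)$, and each is preserved since $r_\ell$, $r_{\ell'}$ preserve $P_1$ and commute with right multiplication by $q$. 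On each $P_q$ the two maps are genuine Euclidean reflections whose mirror lines meet at angle $\alpha$ (one computes $\cos$ of the angle between $v_2-\tfrac{c}{2}v_1$ and $v_1-\tfrac{c}{2}v_2$ to be $-c/2=-\cos\alpha$, i.e.\ the unoriented angle is $\alpha$), so the product is rotation by $2\alpha$ on every $P_q$ and the identity on $U^\perp$; the order is therefore the least $n$ with $n\alpha\in\pi\ZZ$, and enumerating $\alpha=k\pi/n\in[0,\pi/2]$ gives all four parts, with the two admissible values $k=1,2$ for $n=5$ producing both angles in (d). The only caveat worth recording is the implicit assumption $\ell\neq\ell'$ in part (a) (if $\ell=\ell'$ the reflections trivially commute without being perpendicular), but that degenerate case is excluded by the context in which the lemma is used.
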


\subsection{Line systems for reflection groups}

Given a reflection group $W$ with the property that each reflection has order $2$, we may recover $W$ from its line system: $W$ is the group generated by the reflection $r_\ell$ as $\ell$ ranges over all lines perpendicular to a reflecting hyperplane of $W$. Conversely, given a star-closed line system $L$ in $V$, the group generated by the reflections $r_\ell$ is a finite reflection group $W$. It turns out that for the line system $L$ of one of the exceptional groups of rank at least $3$ specified by Cohen \cite{Coh}, every reflection in the resulting group is the reflection with respect to some line in $L$. But this is not immediately obvious and we do not know a proof that does not use the classification and a computer check.

\subsection{Star-closed line systems}

Let $L$ be a finite set of lines in $V$ (via the correspondence $\ell \mapsto \ell^\perp$, specifying $L$ is the same thing as specifying a finite collection of hyperplanes in $V$). Then $L$ is \emph{star-closed} if for all pairs $\ell,\ell' \in L$, we have 
$$r_\ell(\ell') \in L.$$ Assuming $\ell \neq \ell'$, the lines $\ell$ and $\ell'$ span a plane (a $2$-dimensional subspace of $V$). By the formula for $r_\ell$, the line $r_\ell(\ell')$ lies in this plane. 

\subsection{$k$-systems} Let $L$ be a star-closed line system in $V$ that contains at least $2$ lines and let $k \geq 2$ be an integer. We say that $L$ is a $k$-system if the product $r_\ell r_{\ell'}$ is of order at most $k$ for all $\ell,\ell' \in L$, and there exist some $\ell,\ell' \in L$ so that the order of $r_\ell r_{\ell'}$ is exactly $k$. It is straightforward to check that $L$ is a $2$-system if and only if all the lines in $L$ are orthogonal to one another, and that $L$ is a $3$ system if and only if each pair of lines in $L$ is either orthogonal to one another or at angle $\pi/3$ to one another, and there is at least one pair at angle $\pi/3$.

\subsection{$3$-systems} Because of part (e) of Lemma \ref{reflections lemma}, the star-closed line systems such that every pair of lines is at angle $\pi/2$ or $\pi/3$ play a special role. Those that arise in real vector spaces are classified by the simply-laced Dynkin diagrams. In $\CC$-vector spaces, one additional infinite family arises: for the groups of type $G(3,3,n)$, the corresponding set of root lines gives a $3$-system. Moreover, there are two additional exceptional groups: $W(K_5)$ and $W(K_6)$ (the groups $G_{33}$ and $G_{34}$ in the Shephard-Todd notation), corresponding to lines systems $K_5$ and $K_6$. Passing to quaternionic line systems produces, according to the classification theorem from Cohen \cite{Coh}, no additional infinite family of $3$-systems and two new exceptional line systems of ranks $4$ and $5$, denoted $S_1$ and $U$ in Cohen's notation.

\subsection{The Goethals-Seidel decomposition for a $3$-star in a $3$-system}

Here we describe the quaternionic analog of some of the material from chapter 7 of \cite{LeTa}, and which is especially useful in our analysis of the flats for the groups of type $S_1$ and $U$. 

Suppose that $L$ is a $3$-system in a right $\HH$-vector space $V$ equipped with a positive definite Hermitian form $(\cdot,\cdot)$, and that $a,b,c \in L$ are lines at angle $\pi/3$ to one another and together spanning a $2$-dimensional subspace of $V$. It follows that there are no other lines of $L$ in this subspace, which we will call a $3$-star of $L$. We define the following subsets of $L$:

\begin{itemize}
\item[(a)] The set $\Delta$ consists of the lines of $L$ that are perpendicular to all three of $a,b,$ and $c$. 
\item[(b)] The set $\Lambda$ consists of the lines of $L \setminus \{a,b,c \}$ that are not perpendicular to any of $a,b,$ and $c$.
\item[(c)] The set $\Gamma_a$ consists of the lines of $L$ that are perpendicular to $a$ but not to $b$ or $c$; the sets $\Gamma_b$ and $\Gamma_c$ are defined analogously.
\end{itemize} It is clear that $L$ is the disjoint union $$L=\{a,b,c \} \cup \Delta \cup \Lambda \cup \Gamma_a \cup \Gamma_b \cup \Gamma_c.$$ Evidently the cardinalities of these sets depend only on the $W(L)$ orbit of the $3$-star $\{a,b,c \}$. But they may well vary across such orbits, as the examples we compute next show. As a first very simple example we consider the line system of type $A_3$, consisting of the six lines spanned by the six vectors

$$a=(1,-1,0,0), \ b=(1,0,-1,0), \ c=(0,1,-1,0), \ d=(1,0,0,-1), \ e=(0,1,0,-1), \ \text{and} \ f=(0,0,1,-1).$$ Taking our $3$-star to be $\{a,b,c \}$ we evidently have $\Delta=\emptyset=\Lambda$, while $\Gamma_a=\{ f \}$, $\Gamma_b=\{ e \}$, and $\Gamma_c=\{d \}$.

Now we show that the GS decomposition determines the $3$-flats of $L$ containing the $3$-star $\{a,b,c\}$.

\begin{lemma} Let $L$ be a $3$-system and let $\{a,b,c\}$ be a $3$-star of $L$.
\item[(a)] The $3$-flat generated by $\{a,b,c\}$ and an element $\ell$ of $\Delta$ is of type $A_2 \times A_1$, and the number of flats of $L$ of type $A_2 \times A_1$ containing $\{a,b,c\}$ is the cardinality of $\Delta$.

\item[(b)] The $3$-flat generated by $\{a,b,c\}$ and an element $\ell$ of $\Lambda$ is of type $G(3,3,3)$, and the number of flats  of $L$ of type $G(3,3,3)$ containing $\{a,b,c\}$ is the cardinality of $\Lambda$ divided by $6$ (in particular, the cardinality of $\Lambda$ is a multiple of $6$). 

\item[(c)] The $3$-flat generated by $\{a,b,c\}$ and an element $\ell$ of $\Gamma_a$ is of type $A_3$, and the number of flats of $L$ of type $A_3$ containing $\{a,b,c\}$ is the cardinality of $\Gamma_a$. Moreover, for each element $d \in \Gamma_a$ there are unique $e \in \Gamma_b$ and $f \in \Gamma_c$ such that the lines in the $3$-flat generated by $d$ and $\{a,b,c\}$ are precisely $a,b,c,d,e,$ and $f$.
\end{lemma}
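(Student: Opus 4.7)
The plan is to reduce each assertion to a calculation in the rank-three flat $X = \mathrm{span}_\HH\{a,b,c,\ell\}$, relying on the classification of star-closed 3-systems of rank at most three. By Cohen's classification \cite{Coh} (and the remark in the excerpt that no new infinite family of quaternionic 3-systems appears in rank three), the rank-three possibilities are $A_1^3$, $A_2 \times A_1$, $A_3$, and $G(3,3,3)$; since $L \cap X$ contains the 3-star $\{a,b,c\}$, only the last three occur.

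For (a), if $\ell \in \Delta$ then $\ell$ is orthogonal to $\mathrm{span}(a,b,c)$, so $X = \mathrm{span}(a,b,c) \oplus \ell$ is an orthogonal direct sum, the line subsystem is precisely $\{a,b,c,\ell\}$, and the flat is of type $A_2 \times A_1$. Any $\ell' \in \Delta$ lying in $X$ must lie in $\mathrm{span}(a,b,c)^\perp \cap X$, which equals $\ell$, so distinct elements of $\Delta$ generate distinct flats, giving the count $|\Delta|$.

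For (b), if $\ell \in \Lambda$ then the 3-system property forces $\ell$ to make angle $\pi/3$ with each of $a, b, c$. This rules out $A_2 \times A_1$ (whose extra line is perpendicular to the $A_2$-plane) and $A_3$ (in which, as we confirm in (c) below, any line at angle $\pi/3$ to two members of a 3-star is perpendicular to the third). Hence the flat is of type $G(3,3,3)$, which has nine lines. A direct check in the standard realization of $G(3,3,3)$ shows that the six lines outside the chosen 3-star all make angle $\pi/3$ with each of its members, and therefore all lie in $\Lambda$. Since each $G(3,3,3)$-flat containing $\{a,b,c\}$ contributes exactly these six lines to $\Lambda$, partitioning $\Lambda$ by the flat it generates yields $|\Lambda|/6$ flats and simultaneously forces $6 \mid |\Lambda|$.

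For (c), if $d \in \Gamma_a$ then $d \perp a$ but $d$ is at angle $\pi/3$ to $b$ and $c$, so the flat is neither $A_2 \times A_1$ (not orthogonal to the $A_2$-plane) nor $G(3,3,3)$ (since $d\perp a$); hence it is of type $A_3$, with six lines. Star-closure produces the remaining two lines as $r_b(d)$ and $r_c(d)$: by Lemma \ref{reflections lemma}(e), $r_b$ fixes $b$ and swaps $a$ with $c$, so $r_b(d)$ is perpendicular to $c$ and at angle $\pi/3$ to $a$ and $b$, i.e.\ $r_b(d) \in \Gamma_c$; symmetrically $r_c(d) \in \Gamma_b$. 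In the $A_3$ model these are the only lines with the prescribed orthogonality pattern, so the pair $(e, f) = (r_c(d), r_b(d))$ is unique and the number of $A_3$-flats through $\{a,b,c\}$ equals $|\Gamma_a|$. The main obstacle is justifying the rank-three classification in the quaternionic setting, which we handle by invoking \cite{Coh}; alternatively each case admits a direct verification from the Gram matrix of $\{a,b,c,\ell\}$.
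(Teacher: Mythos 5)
Your proof is correct and takes essentially the same route as the paper's: both reduce to the classification of rank-three $3$-systems (only $A_1^{\times 3}$, $A_2\times A_1$, $A_3$, and $G(3,3,3)$ occur) and then distinguish the three types containing a $3$-star by the angles the adjoined line makes with $a$, $b$, $c$. You fill in more detail than the paper's two-sentence proof (the line counts within each type and the explicit construction $e=r_c(d)$, $f=r_b(d)$ via star-closure), but the underlying argument is identical.
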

\begin{proof}
This follows from Cohen's classification: the only reflection groups of rank three containing only reflections of order two and such that the order of a product of two reflections is either $2$ or $3$ are the groups of types $A_1^{\times 3}$, $A_2 \times A_1$, $A_3$, and $G(3,3,3)$. The group of type $A_1^{\times 3}$ doesn't contain a copy of $A_2$, and the other three are distinguished by the angles their lines make with a given subgroup of type $A_2$.
\end{proof}

\section{First examples of $\mu$ and $e$-invariants}

\subsection{Structure of our counting techniques} In order to apply \eqref{line system cohomology formula} and \eqref{mu recursion} to compute the $\mu$-invariant we classify and count the flats in the line systems of the exceptional reflection groups. In our count of the flats of the exceptional reflection groups, we repeatedly employ slight variants the following strategy. Given that we have counted all flats of rank $d$, we count flats of rank $d+1$ by counting, in two ways, pairs $(X,Y)$ such that $X$ is a rank $d$ flat of a given type, $Y$ is a rank $d+1$ flat of a given type, and $X \subseteq Y$. To carry this out, we must know how many flats of a certain type contain a given flat $X$, and know, given a flat $Y$ of a certain type, how many flats of a certain type it contains. We collect some of this information here for a number of small examples that will be useful later on, while at the same time illustrating the strategy in some simple cases.

\subsection{Notation for line systems} We use the usual notation $A_n$ for the line system of the symmetric group $S_{n+1}$ acting on its irreducible $n$-dimensional reflection representation, and similarly use the notation $B_n$ for the line system of the type $B_n$ Weyl group. We recall that in \ref{infinite family} we have defined the complex reflection groups of type $G(\ell,m,n)$. We will also use this notation below.

\subsection{$A_1^{\times 3}$} We consider first the line system $L=A_1 \times A_1 \times A_1$: for this line system $S(L)$ is a boolean algebra on three generators, of cardinality $8$. There are three elements of $S(L)$ of rank two, all isomorphic to $A_1 \times A_1$. It thus follows from the definition that
$$\mu(L)=3-3+1=1$$ and $e(L)=1$, in accordance with the fact that $e(A_1)=\mu(A_1)=1$ and these functions are multiplicative.

\subsection{$A_1 \times A_2$} Next, for the line system $A_1 \times A_2$, the rank one elements of $S(L)$ are four lines, and the rank two elements are three line systems of type $A_1 \times A_1$ and one line system of type $A_2$. It follows that
$$e(L)=\mu(L)=1-4+3+2=2.$$

\subsection{$A_3$} For the line system $A_3$, the rank one elements of $S(L)$ are six lines and the rank two elements are three line systems of type $A_1 \times A_1$ and four line systems of type $A_2$. Thus
$$e(L)=\mu(L)=1-6+3+4\cdot2=6.$$

\subsection{$B_3$} For the line system $B_3$, the rank one elements of $S(L)$ are the nine lines and the rank two elements come in three types. Firstly, there are six subsystems of type $A_1 \times A_1$. Secondly, there are three subsystems of type $B_2$. And finally, there are four subsystems of type $A_2$. Thus
$$e(L)=\mu(L)=1-9+6+3 \cdot 3+4 \cdot 2=15.$$

\subsection{$G(3,3,3)$} For the line system of the group $G(3,3,3)$, the rank one elements of $S(L)$ are the nine lines, on which the group acts transitively. Fixing one of these lines, every other line is at angle $\pi/3$ to it, and the other eight lines are therefore partitioned into four sets of two lines each, forming together with our fixed line a system of type $A_2$. It follows by counting pairs $(\ell,X)$ consisting of a line $\ell$ of $L$ and a $2$-flat $X$ containing it that there are $12$ of these $2$-flats of type $A_2$. Thus
$$\mu(L)=12 \cdot 2-9+1=16.$$ To compute $e(L)$ we observe that $|G(3,3,3)|=54$, while each of the twelve rank two reflection subgroup contributes $2$ elements with fixed space a line, and the $9$ reflections account for the elements with fixed space of dimension $2$. This leaves $$e(L)=54-24-9-1=20$$ elliptic elements in $G(3,3,3)$.

\subsection{$G(4,4,3)$} For the line system of the group $G(4,4,3)$, the rank one elements of $S(L)$ are the twelve lines, on which the group acts transitively. Fixing one of these lines, say the line spanned by $(1,-1,0)$, the remaining lines may be classified according to the angle they make with this one:
\begin{enumerate}
\item[(a)] The line spanned by $(1,1,0)$ is perpendicular to it.
\item[(b)] The lines spanned by $(1,i,0)$ and $(1,-i,0)$ each make an angle $\pi/4$ with it.
\item[(c)] The lines spanned by vectors of the form $(1,0,\zeta)$ and $(0,1,\zeta)$ make angle $\pi/3$ with it.
\end{enumerate} The lines of the first two sorts make up the unique system of type $B_2$ containing the given line. The remaining eight lines are partitioned into four pairs each forming a $2$-flat of type $A_2$ containing the given line. It follows that there are three $2$-flats of type $B_2$ and sixteen $2$-flats of type $A_2$. Thus
$$\mu(L)=1-12+3\cdot 3+16\cdot 2=30.$$ To compute $e(L)$ we observe that $|G(4,4,3)|=96$ so that the above calculations show
$$e(L)=96-1-12-3 \cdot 3-16\cdot 2=42.$$

\subsection{$W_3(Q,\pm 1)$} For the line system of the group $W_3(Q,\{\pm 1\})$, we have two orbits of lines: one consists of the lines spanned by the three vectors $(1,0,0)$, $(0,1,0)$, and $(0,0,1)$, and the other contains the lines spanned by the vectors of the form $(1,p,0)$, $(1,0,p)$, and $(0,1,p)$ for $p \in Q$. There are then $3$ types of $2$-flat: $3$ of type $W_2(Q,\{ \pm 1\})$ with $\mu$-invariant $9$ and $e$-invariant $21$, $64$ of type $A_2$ with $\mu$-invariant $2$, and $24$ of type $A_1 \times A_1$ with $\mu$-invariant $1$. It follows that the $\mu$-invariant of $W_3(Q,\{\pm 1\})$ is $$\mu(L)=153$$ (in agreement with our topological calculation below) and since $|W_3(Q,\pm 1)|=768$, the $e$-invariant is
$$e(L)=768-3\cdot 21-64\cdot 2-24-27-1=525.$$

\subsection{$G(3,3,4)$} Finally we consider $G(3,3,4)$. This is a $3$-system with one conjugacy class of reflections, containing $18$ roots lines. Fixing the line spanned by $(1,-1,0,0)$ we find that there are $3$ lines orthogonal to it and $14$ at angle $\pi/3$ to it. It follows that there are $3$ flats of type $A_1 \times A_1$ and $7$ flats of type $A_2$ containing it, and then by counting pairs that there are $27$ flats of type $A_1 \times A_1$ and $42$ flats of type $A_2$ in the arrangement of $G(3,3,4)$. The flats of type $A_1 \times A_1$ form a single $G(3,3,4)$-orbit, while those of type $A_2$ are divided into two orbits. One of these orbits contains the flat spanned by $(1,-1,0,0)$ and $(0,1,-1,0)$ while the other contains the flat spanned by $(1,-1,0,0)$ and $(1,-\zeta,0,0)$. There are $36$ flats in the first orbit and $6$ flats in the second.

We now describe the GS decomposition for each type of flat. First, for $a=(1,-1,0,0)$, $b=(1,0,-1,0)$, and $c=(0,1,-1,0)$ we have $\Delta=\emptyset$, $\Lambda$ of cardinality $6$ and $\Gamma_a$ of cardinality $3$. Thus this $A_2$ is contained in $3$ flats of type $A_3$ and one of type $G(3,3,3)$. Second, for $a=(1,-1,0,0)$, $b=(1,-\zeta,0,0)$, and $c=(1,-\zeta^2,0,0)$ we have $\Delta$ of cardinality $3$, $\Lambda$ of cardinality $12$ and $\Gamma_a=\emptyset$. Thus this $A_2$ is contained in $3$ flats of type $A_2 \times A_1$ and $2$ flats of type $G(3,3,3)$. 

It's straightforward to see that there are no flats of type $A_1 \times A_1 \times A_1$ in this arrangement. To count flats of type $G(3,3,3)$ we count pairs $X \subseteq Y$ where $X$ is of type $A_2$ and $Y$ is of type $G(3,3,3)$ in two ways. Thus if there are $x$ flats of type $G(3,3,3)$ we have $12 x=6 \cdot 2+36$ so that $x=4$. In other words, there are only the four obvious flats of type $G(3,3,3)$, and these form a single conjugacy class. 

To count flats of type $A_3$ we use the same argument. If there are $x$ flats of this type then we have $4x=3 \cdot 36$ so that there are $x=27$ flats of type $A_3$ in the arrangement for $G(3,3,4)$. These flats of type $A_3$ are a single conjugacy class.

Finally, to count flats of type $A_2 \times A_1$ we use the usual argument. Thus if $x$ is the number of these flats we have $x=6\cdot 3=18$. There is a single conjugacy class of flats of this type.

Now we can compute the $\mu$ invariant of $G(3,3,4)$: it is $$\mu(L)=-1+18-27-2 \cdot 42+4 \cdot 16+27 \cdot 6+18 \cdot 2=168.$$ Likewise the $e$-invariant is $$e(L)=240.$$

\section{The groups $W_n(\Gamma,\Delta)$}

\subsection{A product decomposition of $W_n(\Gamma)$} \label{c product formula}
The following lemma is the key point in our first computation of $c_W(t)$ for the groups $W_n(\Gamma)$, and explains the factorization we obtain. It seems likely that for the exceptional groups $W$, the existence of similar product decompositions explains the limited factorizations occurring for $c_W(t)$. 

\begin{lemma}
Each element $w \in W_n(\Gamma)$ may be written uniquely as a product $w=x v$, where $v \in W_{n-1}(\Gamma)$ and 
$$x \in \{ 1 \} \cup \{ \gamma_n (mn) \gamma_n^{-1} \ | \ 1 \leq m < n, \ \gamma \in \Gamma \} \cup \{ \gamma_n \ | \ \gamma \in \Gamma \setminus \{1 \} \}.$$ Moreover the fixed space of $w$ is the intersection of the fixed spaces of $v$ and $x$, and its codimension is the sum of their codimensions.
\end{lemma}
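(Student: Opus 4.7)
The plan is to identify the given set of permitted $x$'s, call it $\Sigma$, as a system of coset representatives for $W_n(\Gamma)/W_{n-1}(\Gamma)$ via the orbit map $x \mapsto x(e_n)$, and then verify the fixed-space claim by a direct coordinate computation on the decomposition $V = U \oplus e_n \HH$, where $U = e_1 \HH \oplus \cdots \oplus e_{n-1} \HH$.

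First I would check that $|\Sigma| = 1 + (n-1)m + (m-1) = nm$ agrees with the index $|W_n(\Gamma)|/|W_{n-1}(\Gamma)| = n!\,m^n / ((n-1)!\,m^{n-1}) = nm$, so it suffices to show $\Sigma$ is a transversal. Since $W_{n-1}(\Gamma)$ is the stabilizer of $e_n$ in $W_n(\Gamma)$, this amounts to checking that the values $x(e_n)$ for $x \in \Sigma$ are distinct and exhaust the full orbit $\{e_j \gamma : 1 \leq j \leq n,\ \gamma \in \Gamma\}$ of cardinality $nm$. Direct computation yields $1(e_n) = e_n$, $\gamma_n(e_n) = e_n\gamma$ (for $\gamma \neq 1$), and $\gamma_n(jn)\gamma_n^{-1}(e_n) = e_j \gamma^{-1}$ (for $j < n$); these are distinct and fill up the orbit. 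Given $w \in W_n(\Gamma)$, the unique $x \in \Sigma$ with $x(e_n) = w(e_n)$ then gives $v := x^{-1} w \in W_{n-1}(\Gamma)$, establishing the existence and uniqueness of the factorization $w = xv$.

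For the fixed-space claim, the inclusion $\mathrm{fix}(x) \cap \mathrm{fix}(v) \subseteq \mathrm{fix}(xv)$ is immediate. For the reverse, I would write $u = u' + e_n u_n$ with $u' \in U$ and use that $v$ preserves $U$ and fixes $e_n$, so $v(u) = v(u') + e_n u_n$. The equation $xv(u) = u$ rewrites as $v(u) = x^{-1}(u)$. In the case $x = \gamma_n$ with $\gamma \neq 1$, one has $x^{-1}(u) = u' + e_n \gamma^{-1} u_n$, so matching coordinates forces $u_n = 0$ (using $\gamma \neq 1$) and $v(u') = u'$, i.e., $u \in \mathrm{fix}(x) \cap \mathrm{fix}(v)$. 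In the case $x = \gamma_n(jn)\gamma_n^{-1}$, an explicit calculation on basis vectors gives $x e_i = e_i$ for $i \notin \{j,n\}$, $x e_j = e_n \gamma$, and $x e_n = e_j \gamma^{-1}$; matching the $e_n$-coefficient in $v(u) = x^{-1}(u) = x(u)$ forces $u_n = \gamma u_j$ (so $u \in \mathrm{fix}(x)$), and the remaining coefficients then reduce to $v(u') = u'$.

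Finally, for the codimension statement, when $x = 1$ there is nothing to prove. Otherwise $\mathrm{fix}(x)$ is a hyperplane not containing $e_n$: in the diagonal case because $\gamma \cdot 1 \neq 1$, and in the conjugated-transposition case because $u_n = \gamma u_j$ fails at $u = e_n$. Since $\mathrm{fix}(v) \supseteq e_n \HH$, this forces $\mathrm{fix}(v) + \mathrm{fix}(x) = V$, so the standard dimension formula gives $\mathrm{codim}(\mathrm{fix}(v) \cap \mathrm{fix}(x)) = \mathrm{codim}(\mathrm{fix}(v)) + \mathrm{codim}(\mathrm{fix}(x))$. The main bookkeeping is the action of $x$ on the standard basis in the conjugated-transposition case, but once that is written out the rest of the argument is purely mechanical.
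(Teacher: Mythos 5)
Your proof is correct and follows essentially the same route as the paper: existence and uniqueness by matching $w(e_n)$ against the orbit $\{e_j\gamma\}$ and comparing cardinalities, and the fixed-space claim by comparing the $e_n$-coefficient (the paper compares the $e_m$-coefficient in the transposition case, which is equivalent). Your explicit verification of the codimension additivity via $\mathrm{fix}(v)\supseteq e_n\HH$ and $\mathrm{fix}(x)$ being a hyperplane not containing $e_n$ is a welcome detail that the paper leaves implicit.
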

\begin{proof}
For existence, if $e_1,\dots,e_n$ are the canonical basis elements of $\HH^n$, then $w(e_n)=\gamma e_m$ for some $\gamma \in \Gamma$ and $1 \leq m \leq n$. If $m=n$ then we have $\gamma_n^{-1} w \in W_{n-1}(\Gamma)$. Otherwise $\gamma_n^{-1} (mn) \gamma_n w \in W_{n-1}(\Gamma)$. Uniqueness follows by comparing cardinalities.

Evidently the intersection of the fixed spaces of $x$ and $v$ is contained in the fixed space of $w$. Conversely, if $e_1 a_1+e_2 a_2+\cdots+e_n a_n$ belongs to the fixed space of $w=xv$ and $x=\gamma_n$ for some $\gamma \in \Gamma$ then comparing coefficients of $e_n$ on both sides of $$e_1 a_1+\cdots+e_n a_n=xv(e_1a_1+\cdots+e_n a_n)=\cdots+e_n \zeta a_n$$ shows that $x$ fixes $e_1 a_1+\cdots e_n a_n$ and our claim follows. If on the other hand $x=\gamma_n (mn) \gamma_n^{-1}$ then comparing coefficients of $e_m$ on both sides of the same equation shows that $x$ fixes $e_1 a_1+\cdots e_n a_n$. 
\end{proof} 

\begin{corollary}
For $W=W_n(\Gamma)$ with $|\Gamma|=m$, the polynomial $c_W(t)$ is
$$c_W(t)=\prod_{p=1}^n (1+(pm-1) t).$$
\end{corollary}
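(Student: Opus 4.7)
The approach is induction on $n$, with the lemma above as the key input. That lemma provides a distinguished set $X$ of left coset representatives for $W_{n-1}(\Ga) \leq W_n(\Ga)$ with the essential additional feature that under the decomposition $w = xv$ the codimensions of the fixed spaces add. This immediately reduces $c_W(t)$ to a product
$$c_{W_n(\Ga)}(t) \;=\; \Big(\sum_{x \in X} t^{\mathrm{codim}\, \mathrm{fix}(x)}\Big) \cdot c_{W_{n-1}(\Ga)}(t).$$

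My next step is to evaluate the factor on the left by identifying the codimension of $\mathrm{fix}(x)$ for each $x \in X$. The non-identity elements come in two families, and I would verify that both consist of reflections. A diagonal element $\ga_n$ with $\ga \neq 1$ has fixed hyperplane $\{v_n = 0\}$, since $\ga - 1$ is invertible in $\HH$. A conjugate transposition $\ga_n (mn) \ga_n^{-1}$ fixes $e_j$ for $j \notin \{m,n\}$ and acts on the plane $\HH e_m \oplus \HH e_n$ with a one-dimensional fixed line spanned by $e_m + e_n \ga$, so it too has codimension-one fixed space. Counting yields $(n-1)m$ elements of the first family and $m-1$ of the second, for $nm - 1$ reflections in $X$; hence the $X$-sum is $1 + (nm-1)t$.

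The induction then starts from the base case $W_1(\Ga) = \Ga$, on which every non-identity scalar acts with fixed space $\{0\}$, giving $c_{W_1(\Ga)}(t) = 1 + (m-1)t$, matching the $p=1$ factor. The telescoping recursion $c_{W_n(\Ga)}(t) = (1 + (nm-1)t)\, c_{W_{n-1}(\Ga)}(t)$ then delivers the product formula. There is no serious obstacle here — the lemma carries the structural content — and the only step requiring explicit computation is the short check on the plane $\HH e_m \oplus \HH e_n$ that each conjugate transposition $\ga_n(mn)\ga_n^{-1}$ is a reflection.
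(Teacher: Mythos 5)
Your proposal is correct and follows the same route as the paper, which simply invokes the preceding lemma and induction on $n$; you have usefully filled in the details the paper leaves implicit (that every non-identity element of $X$ is a reflection, and the base case $c_{W_1(\Gamma)}(t)=1+(m-1)t$). The only blemish is that your counts $(n-1)m$ and $m-1$ are attached to the two families in the opposite order from how you introduced them, but the total $nm-1$ and hence the factor $1+(nm-1)t$ are right.
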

\begin{proof}
This follows from the previous lemma by induction on $n$.
\end{proof}

\subsection{The polynomial $c_W(t)$ for the groups $W_n(\Gamma,\Delta)$.} Here we observe that for the groups $W=W_n(\Gamma,\Delta)$, the polynomial $c_W(t)$ depends only on $n$ and the orders of $\Gamma$ and $\Delta$. Since $W$ is a complex group (for which the claimed formula is well-known) when $\Gamma$ is cyclic, we obtain the formula in general. First, each element $w \in W_n(\Gamma,\Delta)$ may be written uniquely as $w=\gamma^{(1)}_1 \cdots \gamma^{(n)}_n v$ for $v \in S_n$ and $\gamma^{(1)},\dots,\gamma^{(n)} \in \Gamma$ such that $\gamma^{(1)} \cdots \gamma^{(n)} \in \Delta$. The element $v$ may be decomposed uniquely as a product of disjoint cycles with lengths determining a partition $\lambda$ called the \emph{cycle type} of $v$. Fixing a cycle $c=(p_1 p_2 \cdots p_\ell)$ with $p_1 < p_2 < \cdots p_\ell$ appearing in this decomposition, we call the product $\gamma^{(p_1)} \cdots \gamma^{(p_\ell)}$ of the corresponding elements of $\gamma$ the \emph{cycle product} of $c$. Ordering the cycles of $v$ by length and then breaking ties according to the smallest element in each produces an ordered list $(\pi_1,\pi_2, \dots, \pi_{\ell(\lambda)})$ of the cycle products, and it is straightforward to check the following lemma:
\begin{lemma} 
The fixed space of $w$ is of dimension equal to the number of cycle products in the list $(\pi_1,\pi_2,\dots,\pi_{\ell(\lambda)})$ that are equal to the identity $1$ of $W$.
\end{lemma}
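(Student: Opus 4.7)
The plan is to split the fixed-space problem into one independent computation per cycle of $v$. Write $w = D v$ where $D = \gamma^{(1)}_1 \gamma^{(2)}_2 \cdots \gamma^{(n)}_n$ is the diagonal matrix with $i$-th diagonal entry $\gamma^{(i)}$. For each cycle $c$ of $v$ (including any fixed points, regarded as length-one cycles), the $\HH$-subspace $V_c = \bigoplus_{i \in c} e_i \HH$ is preserved by $v$, which permutes the indices in $c$ among themselves, and by $D$, which is diagonal and so preserves each coordinate line; hence $V_c$ is $w$-stable. Since $\HH^n = \bigoplus_c V_c$ decomposes as a direct sum of $w$-stable subspaces indexed by the cycles of $v$, we obtain $\mathrm{fix}_{\HH^n}(w) = \bigoplus_c \mathrm{fix}_{V_c}(w)$, and it suffices to compute $\dim_\HH \mathrm{fix}_{V_c}(w)$ for each cycle $c$ separately.

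For a fixed cycle $c$ of length $\ell$, written as $(p_1, p_2, \ldots, p_\ell)$ in cycle notation with $v(p_i) = p_{i+1}$ (indices modulo $\ell$), I would write a general vector in $V_c$ as $\sum_i e_{p_i} x_{p_i}$ and unwind the equation $w(\sum_i e_{p_i} x_{p_i}) = \sum_i e_{p_i} x_{p_i}$ using the explicit action of $D$ and $v$ on each basis vector. This produces a recurrence expressing each coordinate as a left $\HH$-multiple of its cyclic neighbor by the appropriate $\gamma^{(p_i)}$. Chaining this recurrence all the way around the cycle collapses the $\ell$ equations into a single closure equation of the form $\Pi \cdot x_{p_1} = x_{p_1}$, where $\Pi \in \Gamma$ is a product of the scalars $\gamma^{(p_1)}, \ldots, \gamma^{(p_\ell)}$ in an order dictated by the cycle structure. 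A bookkeeping check identifies $\Pi$, up to cyclic rotation of its factors, with the cycle product of $c$ as defined in the statement; since cyclic rotation of a product preserves the property of being equal to $1$, the identification is independent of which element of $c$ we take as the starting point $p_1$.

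Finally, the closure equation is rigid: because $\HH$ is a division ring and $\Pi \in \Gamma \subseteq \HH^\times$, either $\Pi = 1$, in which case $x_{p_1}$ is free and the recurrence determines the remaining $x_{p_i}$ uniquely, giving a one-dimensional contribution to the fixed space from $V_c$; or $\Pi \neq 1$, in which case $\Pi - 1 \neq 0$ is invertible in $\HH$ and the equation $(\Pi - 1) x_{p_1} = 0$ forces $x_{p_1} = 0$, whence the recurrence yields $x_{p_i}=0$ for every $i$ and so $\mathrm{fix}_{V_c}(w) = 0$. Summing these contributions over the cycles of $v$ gives the asserted formula. The main obstacle in executing this plan is the bookkeeping in the second paragraph: in the non-commutative quaternionic setting the order of the factors in $\Pi$ is essential, and one must carefully reconcile the order produced by the recurrence with the order used to define the cycle product — ultimately only a matter of verifying agreement up to a cyclic rotation of a product in $\Gamma$.
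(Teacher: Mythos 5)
The paper offers no proof of this lemma --- it is declared ``straightforward to check'' --- so yours is the only argument on the table. Its structure is exactly right: the decomposition $\HH^n=\bigoplus_c V_c$ into $w$-stable coordinate subspaces indexed by the cycles of $v$, the reduction of each block to a single closure equation $\Pi x = x$, and the rigidity step (either $\Pi=1$ and the block contributes an $\HH$-line to the fixed space, or $\Pi-1$ is invertible in the division ring and the block contributes nothing) are all correct and are the natural way to verify the statement.

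The deferred bookkeeping, however, does not resolve the way you predict, and it is the only real content of the lemma. With $w(e_i)=e_{v(i)}\gamma^{(v(i))}$ the fixed-point equations read $x_j=\gamma^{(j)}x_{v^{-1}(j)}$, so the closure product through $p$ is $\Pi=\gamma^{(p)}\gamma^{(v^{-1}(p))}\gamma^{(v^{-2}(p))}\cdots$: its factors appear in cycle-traversal order, and $\Pi$ is indeed well defined up to cyclic rotation of its factors. But this is in general \emph{not} a cyclic rotation of the product $\gamma^{(p_1)}\gamma^{(p_2)}\cdots\gamma^{(p_\ell)}$ taken over the increasing enumeration $p_1<\cdots<p_\ell$ of the support of $c$, which is what the text's definition of the cycle product literally says; and since $\Gamma$ is non-abelian, the property of equalling $1$ is not invariant under reordering or reversal. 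Concretely, in $W_3(Q,\{\pm 1\})$ with $Q$ the quaternion group of order $8$ and $\gamma^{(1)}=i$, $\gamma^{(2)}=j$, $\gamma^{(3)}=k$, the two orientations of the $3$-cycle on $\{1,2,3\}$ give closure products $ikj=1$ and $ijk=-1$, hence fixed spaces of dimension $1$ and $0$ respectively, while the increasing-order product is $ijk=-1$ in both cases. So the reconciliation cannot be done ``up to cyclic rotation''; you must take the cycle product to \emph{mean} the product in traversal order (any starting point, by cyclic invariance). With that reading your argument closes completely, and the subsequent counting lemma is unaffected, since it only counts tuples whose ordered product is the identity and uses that $\Gamma/\Delta$ is abelian.
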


\begin{lemma}
Let $d$ be an integer with $0 \leq d \leq n$. The number of elements of $W_n(\Gamma,\Delta)$ with fixed space of dimension $d$ depends only on $d$ and the orders $|\Gamma|$ and $|\Delta|$ of the groups $\Gamma$ and $\Delta$.
\end{lemma}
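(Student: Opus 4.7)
The plan is to combine the previous lemma, which identifies the fixed-space dimension with the number of identity cycle products, with a counting argument whose only inputs are $n$, $|\Gamma|$, and $|\Delta|$. First I would parametrize: every $w \in W_n(\Gamma,\Delta)$ corresponds uniquely to a tuple $(v,\gamma^{(1)},\dots,\gamma^{(n)})$ with $v \in S_n$, $\gamma^{(i)} \in \Gamma$, and $\gamma^{(1)}\cdots\gamma^{(n)} \in \Delta$. I would fix a cycle type $\lambda$ of $v$ with $k = \ell(\lambda)$ parts of sizes $\ell_1,\dots,\ell_k$; the number of permutations $v \in S_n$ of cycle type $\lambda$ is a classical function of $n$ and $\lambda$ only.

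Next, for each such $v$, I would split the count of $\gamma$-assignments according to the cycle products $\pi_1,\dots,\pi_k \in \Gamma$. Given any prescribed sequence of cycle products, the number of compatible $\gamma$-tuples is $\prod_j |\Gamma|^{\ell_j - 1} = |\Gamma|^{n-k}$: within each cycle of length $\ell_j$, the first $\ell_j-1$ entries may be chosen freely in $\Gamma$, and the last is then forced by the prescribed product. This factor is the same regardless of which cycle products are prescribed, so the remaining question is purely combinatorial in $\Gamma$.

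The crucial step is to count tuples $(\pi_1,\dots,\pi_k) \in \Gamma^k$ satisfying (i) exactly $d$ of the entries are equal to $1$, and (ii) $\pi_1\cdots\pi_k \in \Delta$. The key observation making this tractable is that $\Gamma/\Delta$ is abelian, so the condition $\pi_1\cdots\pi_k \in \Delta$ is invariant under any reordering of the $\pi_j$ and coincides with the vanishing of $\prod \pi_j$ in $\Gamma/\Delta$. After choosing the $d$ identity positions in $\binom{k}{d}$ ways, the remaining $r=k-d$ entries must lie in $\Gamma \setminus \{1\}$ with product in $\Delta$. Calling this count $g(r)$, I would apply inclusion-exclusion on which of the $r$ positions is allowed to be $1$: if $f(s)$ denotes the number of $s$-tuples in $\Gamma^s$ with product in $\Delta$, then using the surjective homomorphism $\Gamma^s \to \Gamma/\Delta$ (its kernel has order $|\Gamma|^{s-1}|\Delta|$) yields $f(s) = |\Gamma|^{s-1}|\Delta|$ for $s \geq 1$ and $f(0)=1$, whence $g(r) = \sum_{j=0}^{r} (-1)^j \binom{r}{j} f(r-j)$. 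In particular $g(r)$ depends only on $r$, $|\Gamma|$, and $|\Delta|$.

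Summing over cycle types yields a count of the form $\sum_\lambda N(\lambda) \, |\Gamma|^{n-\ell(\lambda)} \binom{\ell(\lambda)}{d} g(\ell(\lambda)-d)$, each factor of which depends only on $n$, $d$, $|\Gamma|$, and $|\Delta|$, giving the lemma. I do not expect any serious obstacle; the only conceptual point is the appeal to the abelianness of $\Gamma/\Delta$ to reorder the cycle products, which is precisely what dissolves the potentially delicate noncommutativity of $\Gamma$ itself.
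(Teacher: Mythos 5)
Your proposal is correct and follows essentially the same route as the paper: decompose by cycle type, extract the factor $|\Gamma|^{n-\ell(\lambda)}$ and the $\binom{\ell(\lambda)}{d}$ choices of which cycle products are trivial, and reduce to counting tuples of non-identity elements of $\Gamma$ whose product lies in $\Delta$. The only (minor) divergence is in that last step: you evaluate the count in closed form by inclusion--exclusion via $f(s)=|\Gamma|^{s-1}|\Delta|$, whereas the paper proves by a recursion that the number of $k$-tuples of non-identity elements with product equal to a fixed $g$ depends only on $k$, $|G|$, and whether $g=1$; both are valid and yours is arguably the more direct computation.
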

\begin{proof}
We count the number $a(\Gamma,\Delta,n,d)$ of elements of $W_n(\Gamma,\Delta)$ with fixed space of dimension $d$ as follows: for each cycle type $\lambda \vdash n$, let $c_\lambda$ be the number of $v \in S_n$ with cycle type $\lambda$. We must count the number of sequences $(\gamma_1,\dots,\gamma_n)$ of elements of $\Gamma$ that have exactly $d$ cycle products (with respect to $v$) equal to the identity. We may count these sequences by first specifying what the cycle products are, and then counting then number of ways to factor each cycle product appropriately. For the first step, we must choose which $d$ of the $\ell(\lambda)$ cycle products are equal to the identity, and then count the number of ways to factor each element of $\Delta$ into $\ell(\lambda)-d$ non-identity elements of $\Gamma$. Finally there are $|\Gamma|^{n-\ell(\lambda)}$ ways to factor the cycle products into products of elements of $\Gamma$ of the appropriate length. We thus have
$$a(\Gamma,\Delta,n,d)=\sum_{\lambda \vdash n} c_\lambda {\ell(\lambda)\choose d} |\Gamma|^{n-\ell(\lambda)} \sum_{\delta \in \Delta} f(\delta,\ell(\lambda)-d), $$ where $f(\delta,k)$ is the number of sequences $(\gamma_1,\dots,\gamma_k)$ of elements of $\Gamma$ such that $\gamma_i \neq 1$ for all $i$ and $\gamma_1 \gamma_2 \cdots \gamma_k=\delta$. This lemma is now a consequence of the next one.
\end{proof}

\begin{lemma}
Fix an integer $m$ There exist functions $a,b: \ZZ_{>0} \to \ZZ_{\geq 0}$ with the following property: for any group $G$ with $|G|=m$, define a function $f(g,k): G \times \ZZ_{>0} \to \ZZ_{ \geq 0}$ by setting $f(g,k)$ equal to the number of sequences $(g_1,g_2,\dots,g_k)$ of elements of $G$ such that $g_1 g_2 \cdots g_k=g$ and $g_i \neq 1$ for all $1 \leq i \leq k$.  Then $f(g,k)=a(k)$ if $g \neq 1$ and $f(g,k)=b(k)$ if $g=1$.
\end{lemma}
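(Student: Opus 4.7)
The plan is to prove this by induction on $k$, after rewriting $f(g,k)$ via a single-step recursion in which the group $G$ enters only through its cardinality $m$.

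For the base case $k=1$, the sequence $(g_1)$ with $g_1 \neq 1$ and $g_1 = g$ exists in exactly one way if $g \neq 1$ and zero ways if $g=1$, so we set $a(1)=1$ and $b(1)=0$.

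For the inductive step, I would condition on the first coordinate: fixing $g_1 \in G\setminus\{1\}$, the remaining tuple $(g_2,\dots,g_k)$ must have product $g_1^{-1}g$ with all entries non-identity, so
\begin{equation*}
f(g,k) \;=\; \sum_{g_1 \in G \setminus \{1\}} f(g_1^{-1}g,\,k-1).
\end{equation*}
The key observation is that as $g_1$ ranges over $G\setminus\{1\}$, the element $g_1^{-1}g$ ranges bijectively over $G\setminus\{g\}$. Thus the sum splits into two cases. If $g=1$, every term has $g_1^{-1}g = g_1^{-1} \neq 1$, so by induction each term equals $a(k-1)$ and we get $f(1,k)=(m-1)\,a(k-1)$. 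If $g \neq 1$, the values $g_1^{-1}g$ cover $\{1\}$ once and the $m-2$ non-identity elements of $G\setminus\{1,g\}$ once each, giving $f(g,k)=b(k-1)+(m-2)\,a(k-1)$ by induction. Neither expression depends on $g$ beyond whether $g=1$, and neither depends on the internal structure of $G$ beyond $m=|G|$, so we may define
\begin{equation*}
a(k) \;=\; b(k-1) + (m-2)\,a(k-1), \qquad b(k) \;=\; (m-1)\,a(k-1),
\end{equation*}
completing the induction.

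No step is truly hard; the only thing to watch is the bookkeeping in the case $g \neq 1$, namely that $\{g_1^{-1}g : g_1 \in G\setminus\{1\}\} = G\setminus\{g\}$ contains the identity exactly once (corresponding to $g_1=g$) and then each non-identity element other than $g$ itself exactly once. This is what forces the $b(k-1)$ summand to appear in the recursion for $a(k)$ and is the reason that $f(g,k)$ is constant on $G\setminus\{1\}$ rather than merely depending on some finer invariant of $g$.
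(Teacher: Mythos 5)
Your proof is correct and follows essentially the same route as the paper: both establish the base case $k=1$ and then peel off one coordinate to obtain the recursion $b(k)=(m-1)a(k-1)$ and $a(k)=b(k-1)+(m-2)a(k-1)$, which visibly depends only on $m$. The only cosmetic difference is that you condition on the first entry of the sequence while the paper conditions on the last.
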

\begin{proof}
We must prove that $f(g,k)$ depends only on $k$ and whether or not $g$ is the identity, and not on the specific group $G$. We have $f(1,1)=0$ and $f(g,1)=1$ for all $g \in G$ with $g \neq 1$, establishing the claim for $k=1$. Next observe that the inductive hypothesis implies $$f(1,k)=\sum_{h \in G \setminus \{1 \}} f(h^{-1},k-1)=(|G|-1) a(k-1)$$ and for $g \neq 1$
$$f(g,k)=f(1,k-1)+\sum_{h \in G \setminus \{1,g \}} f(gh^{-1},k-1)=b(k-1)+(|G|-2) a(k-1),$$ establishing the lemma by induction.
\end{proof}

\begin{proposition}
For the group $W=W_n(\Gamma,\Delta)$ with $|\Gamma|=m$ and $|\Delta|=p$, the codimension generating function is 
$$c_W(t)=(1+(m-1)t)(1+(2m-1)t)\cdots(1+((n-1)m-1)t) (1+(np-1)t).$$
\end{proposition}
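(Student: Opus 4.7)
The plan is to combine the two preceding lemmas---which together assert that the number of elements of $W = W_n(\Gamma,\Delta)$ with fixed space of a given dimension depends only on $n$, $m = |\Gamma|$, and $p = |\Delta|$---with the classical formula for the codimension generating function of a complex reflection group. Once we know $c_W(t)$ depends only on the triple $(n,m,p)$, we are free to evaluate it on any convenient representative.

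Concretely, I would take $\Gamma$ to be the cyclic group of order $m$, realized inside $\CC^\times \subseteq \HH^\times$, and $\Delta \leq \Gamma$ its unique subgroup of order $p$; the divisibility $p \mid m$ is automatic. With this choice $\Gamma/\Delta$ is cyclic, hence abelian, so the hypotheses of \ref{infinite family} are met, and as recorded there the resulting group $W_n(\Gamma,\Delta)$ is the complex reflection group $G(m, m/p, n)$ in Shephard-Todd notation.

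For any complex reflection group one has $c_W(t) = \prod_{i=1}^n (1+(d_i-1)t)$, and the degrees of $G(\ell, k, n)$ are classically known to be $\ell, 2\ell, \ldots, (n-1)\ell, n\ell/k$. Substituting $\ell = m$ and $k = m/p$ gives degrees $m, 2m, \ldots, (n-1)m, np$, and hence
$$c_W(t) = (1+(m-1)t)(1+(2m-1)t)\cdots(1+((n-1)m-1)t)(1+(np-1)t),$$
which is precisely the asserted formula.

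There is no real obstacle: once the two preceding lemmas are granted, the proof reduces in one step to the cyclic case, where the answer is standard (and the subcase $p = m$, giving the subgroup $W_n(\Gamma) = G(m,1,n)$, was already treated directly by induction in the corollary of \ref{c product formula}). A self-contained alternative---mimicking the product-decomposition argument of \ref{c product formula} but enforcing the constraint $\gamma_1 \cdots \gamma_n \in \Delta$ on the diagonal entries, thereby replacing the final factor $1+(nm-1)t$ by $1+(np-1)t$---is possible, but it would essentially reprove the classical complex-reflection-group formula and so seems unnecessary.
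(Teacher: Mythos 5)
Your proposal is correct and follows essentially the same route as the paper: the text preceding the proposition states exactly that the two lemmas reduce the computation to the case of cyclic $\Gamma$, where $W_n(\Gamma,\Delta)$ is a complex reflection group and the formula $c_W(t)=\prod(1+(d_i-1)t)$ with degrees $m,2m,\dots,(n-1)m,np$ is well known. Your write-up merely makes the choice of cyclic representative and the degree computation explicit, which the paper leaves implicit.
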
 Although this generating function also factors, it is not true that every element of $W_n(\Gamma,\Delta)$ can be written as a product of at most $n$ reflections, so the proof technique used above for $W_n(\Gamma)$ has no chance of working here.

\subsection{A fibration implies the product formula for $p_W(t)$} Here we fix a finite subgroup $\Gamma$ of $\HH^\times$ and let $n$ vary. Write $X_n$ for the hyperplane complement corresponding to the group $W_n(\Gamma)$, which is also the hyperplane complement of $W_n(\Gamma,\Delta)$ when $\Delta \neq 1$ (which is always the case if $\Gamma$ is not abelian). Thus 

$$X_n=\{(x_1,\dots,x_n) \in \HH^n \ | \ x_i \neq 0 \ \text{and} \ x_p \neq \gamma x_q \ \hbox{for all $1 \leq p \neq q \leq n$ and all $\gamma \in \Gamma$} \}.$$

The projection onto the last coordinate defines a map $\pi_n:X_n \to X_{n-1}$. The fiber of $\pi_n$ over $(x_1,\dots,x_{n-1}) \in X_{n-1}$ consists of all points $(x_1,\dots,x_{n-1},x)$ of $X_n$ such that $x \neq 0$ and $x \neq \gamma x_p$ for all $\gamma \in \Gamma$ and all $1 \leq p \leq n-1$. Thus the fiber is the complement in $\HH$ of a set of $1+(n-1)|\Gamma|$ points. In fact it is a fiber bundle, as follows, \emph{mutatis mutandis}, from the proof of Theorem 5.111 in \cite{OrTe}. 

Now we consider the Serre spectral sequence of the fibration $\pi_n$. Since the base is simply connected, the local system of cohomology of the fibers is the constant sheaf $H^q(C)$, where $C$ is the complement in $\HH$ of a set of $1+(n-1)|\Gamma|$ points. In particular $H^q(C)=0$ unless $q=0$ or $q=3$. Moreover, the cohomology of the base $X_{n-1}$ is zero except in degrees divisible by $3$, which implies that the differential $d_2:H^p(X_{n-1},H^q(C)) \to H^{p+2}(X_{n-1},H^{q-1}(C))$ of the $E_2$ page of the spectral sequence is zero and thus that the spectral sequence degenerates. This implies that the Poincar\'e polynomial $p(t)$ of $X_n$ is that of $X_{n-1}$ times $(1+(1+(n-1)|\Gamma|) t)$. It therefore follows by induction on $n$ that the Poincar\'e polynomial of $X_n$, or in other words of $W=W_n(\Gamma,\Delta)$, is 

$$p_W(t)=(1+t)(1+(1+|\Gamma|) t)(1+(1+2 |\Gamma|)t) \cdots (1+(1+(n-1) |\Gamma|) t).$$

\section{The line system of type $Q$} 

\subsection{The line system of $Q$} In the remaining sections we deal with the seven exceptional quaternionic reflection groups of rank at least $3$. We follow Cohen's \cite{Coh} notation, from table II of page 318. Thus to describe the line system of type $Q$, let $$\alpha=\frac{1}{2}(1-i-j-\sqrt{5}k).$$ We consider the group $G(4,2,3)$, which acts on $\HH^3$ by matrix multiplication. The line system for this reflection group consists of the the three coordinate axes (spanned by the vectors $(1,0,0),(0,1,0),$ and $(0,0,1)$) together with the lines spanned by the $12$ vectors
$$(1,1,0),(1,i,0),(1,-1,0),(1,-i,0),(1,0,1),(1,0,i),(1,0,-1),(1,0,-i),(0,1,1),(0,1,i),(0,1,-1),(0,1,-i).$$ Put $v=(1,1,\alpha)$ and consider the $G(4,2,3)$-orbit of the line spanned by $v$. The stabilizer of this line in $G(4,2,3)$ is the group of order four generated by the scalar matrix $-1$ and the transposition interchanging the first two coordinates, and so we obtain $48$ lines in its orbit. We define the line system $Q$ to be this set of $48$ lines together with the $15$ root lines for $G(4,2,3)$. Direct computation shows that the only angles between pairs of lines in $Q$ are $\pi/2$, $\pi/3$, and $\pi/4$. Moreover computing with the vector $(1,0,0)$ shows that there are
\begin{itemize}
\item[(a)] $32$ lines at angle $\pi/3$ to $(1,0,0)$ (all the lines spanned by vectors of the form $(p,q\alpha,r)$ and $(p,q,r\alpha)$ for $p,q,r \in C_4$ with $pqr=-1$),
\item[(b)] $24$ lines at angle $\pi/4$ to $(1,0,0)$ (the remaining $16$ lines in the $G(4,2,3)$-orbit of $v$ together with $8$ of the root line of $G(4,2,3)$), and
\item[(c)] $6$ lines at angle $\pi/2$ to $(1,0,0)$. 
\end{itemize} 

The line spanned by $v$ and the line spanned by $(0,1,-1)$ are at angle $\pi/3$ with one another, and together with the line spanned by $(1,\alpha,1)$ span a $3$-star. Moreover, it follows from (a) above that $(1,0,0)$ is in the same $W=W(Q)$-orbit as the line spanned by $(1,1,\alpha)$. Thus there is only one $W$-orbit of lines in $Q$.

\subsection{The $2$-flats of $Q$} 

The preceding paragraph shows that the $2$-flats containing $(1,0,0)$ are of two sorts:
\begin{itemize}
\item[(a)] Those spanned by $(1,0,0)$ and a line at angle $\pi/4$ to it. The lines in such a $2$-flats form a line system of type $G(4,2,2)$ and there are therefore six of them total, four at angle $\pi/4$ with $(1,0,0)$ and one at angle $\pi/2$. There are $6$ of these containing $(1,0,0)$. 

\item[(b)] Those spanned by $(1,0,0)$ and a line at angle $\pi/3$ to it. These are $3$-stars, and there are $16$ of them containing $(1,0,0)$. 
\end{itemize}

\begin{lemma}
The $2$-flats of $Q$ consist of the following types and no others:
\begin{itemize}
\item[(a)] There are $63$ flats of type $G(4,2,2)$ forming a single $W(Q)$-orbit.

\item[(b)] There are $336$ flats of type $A_2$ forming a single $W(Q)$-orbit.
\end{itemize} The sum of the $\mu$-invariants of these flats is $987$ and the sum of their $e$-invariants is $1239$. Moreover, the flats of type $G(4,2,2)$ are precisely the orthogonal complements of the lines of $Q$.
\end{lemma}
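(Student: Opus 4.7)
The plan is to combine the preceding paragraph's enumeration of the $2$-flats through the line $\ell_0$ spanned by $(1,0,0)$---six of type $G(4,2,2)$ and sixteen $3$-stars (i.e., flats of type $A_2$)---with the transitivity of $W(Q)$ on $Q$ just established, and then perform a standard double counting.

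First I would confirm that no other types of $2$-flat occur. Any $2$-flat is spanned by two distinct lines of $Q$, and the only possible pairwise angles are $\pi/2$, $\pi/3$, and $\pi/4$. Two lines at angle $\pi/3$ generate an $A_2$ flat by star-closedness (Lemma \ref{reflections lemma}(e)), and two lines at angle $\pi/4$ generate a $G(4,2,2)$ flat by the direct calculation in the preceding paragraph. For perpendicular pairs, $\ell_0$ has exactly six perpendicular partners in $Q$ while it lies in exactly six $G(4,2,2)$ flats, each of which contributes one perpendicular partner of $\ell_0$; by pigeonhole these contributions are all distinct, and so every perpendicular pair already lies in a common $G(4,2,2)$ flat.

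Next I would count flats of each type by double counting incident pairs $(\ell, X)$. Since each line lies in six $G(4,2,2)$ flats (each containing six lines) and sixteen $A_2$ flats (each containing three lines), the totals are $63 \cdot 6 / 6 = 63$ and $63 \cdot 16 / 3 = 336$. For the single $W(Q)$-orbit assertions, transitivity on lines reduces the problem to showing that the stabilizer of $\ell_0$ acts transitively on each of the two sets of flats through $\ell_0$, which I would verify directly from the explicit generators of $W(Q)$. This orbit verification is the step I expect to be the main obstacle, though it amounts to a concrete finite computation.

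Finally I would compute $\mu$ and $e$ on each flat type and sum. The elementary recursion gives $\mu(A_2) = e(A_2) = 2$; part (a) of Theorem \ref{main theorem} together with a short enumeration of the $16$ elements of $G(4,2,2)$ yields $N^* = N = 6$ and hence $\mu(G(4,2,2)) = 5$ and $e(G(4,2,2)) = 16 - 6 - 1 = 9$. These produce $336 \cdot 2 + 63 \cdot 5 = 987$ and $336 \cdot 2 + 63 \cdot 9 = 1239$, as required. For the last assertion, the six lines of $Q$ perpendicular to a fixed $\ell \in Q$ all lie in the two-dimensional subspace $\ell^\perp$ and so must span it, making $\ell^\perp$ a $2$-flat of $S(L)$; having six lines, it cannot be of type $A_2$ (which has only three) and so is of type $G(4,2,2)$. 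The map $\ell \mapsto \ell^\perp$ is manifestly injective between two $63$-element sets, hence a bijection.
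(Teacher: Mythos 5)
Your proposal is correct and follows essentially the same route as the paper: transitivity of $W(Q)$ on lines plus double counting of incident pairs gives the counts $63$ and $336$, the $\mu$- and $e$-invariants $5,9$ for $G(4,2,2)$ and $2,2$ for $A_2$ give the stated sums, and the map $\ell \mapsto \ell^\perp$ gives the bijection with the $G(4,2,2)$ flats. The only differences are cosmetic: you make explicit why no $A_1\times A_1$ flats occur (which the paper leaves implicit in its enumeration of the flats through $(1,0,0)$), and for the single-orbit claim on $A_2$ flats you defer to a direct finite computation where the paper notes it can be done explicitly or deduced from the conjugacy results of Bellamy--Schmitt--Thiel.
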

\begin{proof}
Since every line in $Q$ is $W$-conjugate to the span of $(1,0,0)$, the previous computation holds for all lines in $Q$. Letting $x$ be the total number of $2$-flats of type (a) and $y$ the number of type (b), we have
$$6x=63 \cdot 6 \quad \implies \quad x=63 \quad \text{and} \quad 3y=63 \cdot16 \quad \implies \quad y=336.$$ Thus there are $63$ flats of type $G(4,2,2)$, each with $\mu$-invariant $5$, and $336$ flats of type $A_2$, each with $\mu$-invariant $2$, contained in the arrangement of $Q$. The orthogonal complement to $(1,0,0)$ is evidently a flat of type $G(4,2,2)$, so the orthogonal complement to any line of $Q$ is a $G(4,2,2)$ and this proves the last statement as well as the claim that these form a single conjugacy class. That the flats of type $A_2$ form a single conjugacy class can be worked out explicitly, but it follows from \cite{BST} as well. Finally, the statements about $\mu$ and $e$-invariants are immediate from what we have proved.
\end{proof}

Since $|W(Q)|=12096$ the $\mu$ and $e$-invariants of $Q$ are
$$\mu(Q)=1-63+5 \cdot 63+2 \cdot 336=925 \quad \text{and} \quad e(Q)=12096-9 \cdot 63-2 \cdot 336-63-1=10793,$$ implying that $p_W(t)$ and $c_W(t)$ are as claimed in the introduction.

\section{The quaternionic reflection group of type $R$} 

\subsection{The line system} We follow the beautiful paper \cite{Tit} of J. Tits. Our aim here to compute $c_W(t)$ and $p_W(t)$. To achieve this we first compute, for a fixed line $\ell$ in the line system $R$, the set of all $2$-dimensional subspaces there are spanned by $\ell$ and another $\ell'$ in $R$, and for each of them compute the number of lines it contains. According to Tits \cite{Tit}, Corollary 2 on page 65, the line system of type $R$ contains a single orbit of lines consisting of: 
\begin{itemize}
\item[(a)] Three lines spanned by the vectors $(2,0,0)$, $(0,2,0)$, and $(0,0,2)$,
\item[(b)] $24$ lines spanned by vectors $(x,y,z)$ with $\{|x|, |y|,|z| \}=\{0,\sqrt{2},\sqrt{2} \}$,
\item[(c)] $192$ lines spanned by vectors $(x,y,z)$ with $\{|x|, |y|,|z| \}=\{1,1,\sqrt{2} \}$,
\item[(d)] $96$ lines spanned by vectors $(x,y,z)$ with $\{|x|, |y|,|z| \}=\{1,\tau,\tau^{-1} \}$, where $\tau=\frac{1+\sqrt{5}}{2}$.
\end{itemize}

\subsection{The $2$-flats of $R$} Fixing the line $\ell_0$ spanned by $(2,0,0)$, computing using the preceding shows that the other lines of $R$ are organized into the following types according to their angle with $\ell_0$:
\begin{itemize}
\item[(a)] $10$ lines orthogonal to $\ell_0$,
\item[(b)] $160$ lines at angle $\pi/3$ to $\ell_0$,
\item[(c)] $80$ lines at angle $\pi/4$ to $\ell_0$,
\item[(d)] $32$ lines at angle $\pi/5$ to $\ell_0$, and
\item[(e)] $32$ lines at angle $2 \pi /5$ to $\ell_0$.
\end{itemize} Moreover, the explicit description of $R$ in \cite{Coh} shows that the line system orthogonal to a line of $R$ is the line system of $W_2(Q,\{\pm 1\})$. In particular there is a $W(R)$-orbit of $2$-flats consisting of $315$ flats of type $W_2(Q,\{ \pm 1\})$. Hence if $x$ is the number of these containing $\ell_0$ we have $315 x=315 \cdot 10$ so $x=10$. These flats account for all $10$ lines orthogonal to $\ell_0$, proving that these are the only flats of type $W_2(Q,\{\pm 1 \})$ arising for $R$. Moreover they account for all $10 \cdot 8=80$ of the lines at angle $\pi/4$ to $\ell_0$. Now by the results of \cite{BST}, the only other maximal parabolic subgroups of $W(R)$ are of type $A_2$ and $G(5,5,2)$. It follows that the $160$ lines at angle $\pi/3$ to $\ell_0$ are organized into $80$ flats of type $A_2$ containing it, implying that there are $105 \cdot 80$ flats of type $A_2$, and the remaining lines at angle $\pi/5$ and $2 \pi /5$ are accounted for by $16$ flats of type $G(5,5,2)$ containing $\ell_0$. It then follows that if $x$ is the number of flats of type $G(5,5,2)$ we have $5x=315 \cdot 16$, so that $x=63 \cdot 16$. Thus in $R$ there are:
\begin{itemize}
\item[(a)] $2^4 \cdot 3 \cdot 5^2 \cdot 7$ flats of type $A_2$ with $\mu$ and $e$-invariants $2$,
\item[(b)] $2^4 \cdot 3^2 \cdot 7$ flats of type $G(5,5,2)$ with $\mu$ and $e$-invariants $4$, and
\item[(c)] $3^2 \cdot 5 \cdot 7$ flats of type $W_2(Q,\{\pm 1\})$, with $\mu$ invariant $9$ and $e$-invariant $21$.  
\end{itemize} There are no other flats, \cite{BST} implies that each of these types is a single conjugacy class, and the sum of their $\mu$-invariants is 
$$2 \cdot 2^4 \cdot 3 \cdot 5^2 \cdot 7+ 4 \cdot 2^4 \cdot 3^2 \cdot 7+9 \cdot 3^2 \cdot 5 \cdot 7=23667$$ while the sum of their $e$-invariants is $27447$.  Finally, the $\mu$-invariant of $R$ is
$$\mu(R)=23667-315+1=23353$$ and since $|W(R)|=1209600$ its $e$-invariant is
$$e(R)=1209600-2 \cdot 2^4 \cdot 3 \cdot 5^2 \cdot 7-4 \cdot 2^4 \cdot 3^2 \cdot 7-21 \cdot 3^2 \cdot 5 \cdot 7-315-1=1181837.$$ This shows that $p_W(t)$ and $c_W(t)$ are as claimed in the introduction.

\section{The quaternionic reflection group of type $S_1$}

\subsection{The line system of type $S_1$}

Let $V=\HH^4$. $S_1$ is the set of all lines spanned by vectors of the form $\pm e_i \pm e_j$ for $1 \leq i < j \leq 4$ together with the $G(2,2,4)$-orbit of the vector $\frac{1}{\sqrt{2}}(1,i,j,k)$. This orbit consists of all vectors of the form $\frac{1}{\sqrt{2}}(\pm \epsilon_1,\pm \epsilon_2,\pm \epsilon_3,\pm \epsilon_4)$, where $(\epsilon_1,\epsilon_2,\epsilon_3,\epsilon_4)$ is a permutation of $(1,i,j,k)$ and there are an even number of $-$ signs. All these vectors are of square norm $2$, and it is straightforward to check that for each pair of vectors in this set, the angle between them is either $\pi/2$ or $\pi/3$, so $S_1$ is a $3$-system. 

\subsection{The nine frames in $S_1$} The lines in $S_1$ may be partitioned into nine \emph{frames}, by which we mean a set of $4$ mutually orthogonal lines. It follows incidentally that the property of being orthogonal is an equivalence relation on $S_1$, and that the transitive permutation action of $W(S_1)$ on the set of lines is imprimitive with these nine blocks. The frames may be described as follows: they are

$$\{(1,-1,0,0) \HH ,(1,1,0,0) \HH ,(0,0,1,-1) \HH ,(0,0,1,1) \HH \} $$ and the three permutations of this, as well as
$$\{(1,i,j,k) \HH, (1,-i,-j,k) \HH, (1,-i,j,-k) \HH, (1,i,-j-k) \HH \}$$ and the six permutations of this set. The group $W(S_1)$ acts transitively on the set of frames (this produces a nine-dimensional permutation representation, on which the reflections act via conjugates of $(12)(34)(56)(78)$).

\subsection{The $3$-stars containing $(1,i,j,k)$} Here we simply list the $3$-stars of $S_1$ that contain the line spanned by $(1,i,j,k)$. Since this line belongs to each of them, we list these by giving two vectors that span the other two lines in the $3$-star. They are
\begin{align*}
&\{(1,-1,0,0),(1,-i,k,-j)\},\{(1,1,0,0),(1,-i,-k,j)\},\{(1,0,-1,0),(1,-k,-j,i)\},\{(1,0,1,0),(1,k,-j,-i)\}, \\ 
&\{(1,0,0,-1),(1,j,-i,-k)\},\{(1,0,0,1),(1,-j,i,-k)\},\{(0,1,-1,0),(1,j,i,k)\},\{(0,1,1,0),(1,-j,-i,k)\}, \\
&\{(0,1,0,-1),(1,k,j,i) \}, \{(0,1,0,1),(1,-k,j,-i)\},\{(0,0,1,-1),(1,i,k,j)\},\{(0,0,1,1),(1,i,-k,-j)\}, \\
&\{(1,j,k,i),(1,k,i,j)\}, \{(1,-j,-k,i),(1,k,-i,-j)\},\{(1,-j,k,-i),(1,-k,-i,j)\},\{(1,j,-k,-i),(1,-k,i,-j)\}.
\end{align*} To verify, for instance, that $\{(1,i,j,k),(1,j,k,i),(1,k,i,j)\}$ is a $3$-star, apply the reflection in the line $(1,j,i,k)$ to the star $(0,1,-1,0),(0,0,1,-1),(0,1,0,-1)$.

\subsection{The two-dimensional spaces spanned by pairs of lines in $S_1$} Each pair of distinct lines in $S_1$ produces a two-dimensional $\HH$-subspace of $V=\HH^4$. If the two lines are orthogonal to one another, they are the only two lines in the subspace they generate, while if they are at angle $\pi/3$ to one another then there is a third line in the subspace at angle $\pi/3$ to both of them. Since there are $9$ frames, each with $6$ possible choices of two orthogonal vectors, there are $54$ subspaces of the first type. To count the subspaces of the second type we observe that we may count pairs $(\ell,X)$, where $\ell$ is a line of $S_1$ and $X$ is a subspace containing $\ell$ and two other lines at angle $\pi/3$ to it in two ways: one the one hand, each line is contained in precisely one frame consisting of $4$ lines, and it follows that the other $32$ lines are partitioned into $16$ pairs of lines spanning, together with $\ell$, a space $X$ of the desired type. Thus there are $16$ possible $X$'s for each $\ell$, and the number of pairs is therefore $36 \cdot 16$. On the other hand, it is $3$ times the number of such subspaces $X$, so there must be $12 \cdot 16=192$ such subspaces $X$. This proves all the statements in the following lemma except the one about the conjugacy classes of flats of type $A_2$, which one verifies directly using the description of the $3$-stars given above.
\begin{lemma}
There are $54$ two-dimensional subspaces of $V=\HH^4$ spanned by two orthogonal lines of $S_1$ and not containing any other lines of $S_1$, thus producing flats of type $A_1 \times A_1$, and $192$ two-dimensional subspaces of $V$ spanned by three lines of $S_1$ at angle $\pi/3$ to one another (and not containing any other line of $S_1$), thus producing flats of type $A_2$. There is only one $W(S_1)$-orbit of flats of type $A_1 \times A_1$, but there are four $W(S_1)$-orbits of flats of type $A_2$, each of cardinality $48$, with representatives given by the following four flats: the flat spanned by $(1,-1,0,0)$ and $(0,1,-1,0)$, the flat spanned by $(0,0,1,-1)$ and $(1,i,j,k)$, the flat spanned by $(0,0,1,-1)$ and $(1,j,i,k)$, and the flat spanned by $(0,0,1,-1)$ and $(1,k,i,j)$. 
\end{lemma}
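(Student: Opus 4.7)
The plan is to follow the outline given in the paragraph immediately preceding the lemma, which already establishes most of the claims, and to spend the bulk of the effort on the four-orbit structure for the $A_2$-flats. I would first formalize the $54 = 9\cdot 6$ count of $A_1\times A_1$-flats by invoking the equivalence-relation statement for orthogonality: any two orthogonal lines of $S_1$ lie in a unique frame, each frame contributes $\binom{4}{2}=6$ unordered orthogonal pairs, and distinct pairs span distinct $2$-subspaces. The absence of a third line of $S_1$ in any such subspace follows because it would have to be orthogonal to both chosen lines and hence lie in the same frame, a contradiction. The parabolic type $A_1\times A_1$ then follows because the two generators $r_\ell,r_{\ell'}$ commute by orthogonality.

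For the $192 = 36\cdot 16/3$ count of $A_2$-flats, I would make the double count rigorous: fixing $\ell \in S_1$, the $32$ lines of $S_1$ outside the frame of $\ell$ meet $\ell$ at angle $\pi/3$ (since $S_1$ is a $3$-system), and by Lemma~\ref{star uniqueness} they pair up via $\ell'\mapsto r_\ell(\ell')$ into $16$ three-stars through $\ell$; each such three-star spans a flat of type $A_2$ because pairwise angles $\pi/3$ produce reflections whose products have order $3$. The single $W(S_1)$-orbit on $A_1\times A_1$-flats would then follow from transitivity of $W(S_1)$ on frames (already noted) together with transitivity of the stabilizer of a frame on its six unordered orthogonal pairs, which one witnesses using elements of $W(D_4)\subset W(S_1)$ such as $(a,b,c,d)\mapsto(c,d,a,b)$ combined with the evident coordinate swaps and sign changes.

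The main work is to establish the four $W(S_1)$-orbits on $A_2$-flats. I would fix the line $\ell = (1,i,j,k)\HH$ and analyze the action of the stabilizer $W(S_1)_\ell$ on the $16$ three-stars through $\ell$ that were listed explicitly in the preceding subsection. These divide a priori into $12$ mixed stars (each containing a real line) and $4$ purely quaternionic stars, and the goal is to refine this into exactly four stabilizer-orbits, all of size four. Each such local orbit then lifts, by transitivity of $W(S_1)$ on the $36$ lines, to a global orbit of $4\cdot 36/3 = 48$ flats, summing to the required $192$. The four listed representatives are placed in distinct local orbits by tracking which frames the other two lines of each star belong to; in particular, the first representative is the unique one not involving any line of the quaternionic frames, while the remaining three are distinguished by the combinatorial pattern of the two quaternionic frames meeting each star. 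The main obstacle is precisely this refinement: verifying that the $16$ three-stars split $4+4+4+4$ rather than, say, $12+4$. This is a finite, explicit check — either by a careful hand computation using generators of $W(S_1)_\ell$ implicit in the coordinate description above, or by a brief computer calculation — and is the only genuinely non-routine step in the proof.
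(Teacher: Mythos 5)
Your proposal follows the paper's own route: the $54=9\cdot 6$ count via the nine frames, the $192$ count by double counting pairs consisting of a line and a $3$-star through it (using the $16$ stars through each line), and a deferral of the four-orbit claim for the $A_2$-flats to an explicit finite verification against the listed $3$-stars --- which is exactly what the paper does (``one verifies directly using the description of the $3$-stars given above''). Your reduction of that verification to the action of the stabilizer of a single line on the $16$ stars through it is legitimate, since the setwise stabilizer of an $A_2$-flat contains a copy of $S_3$ acting transitively on its three lines, so $W(S_1)$-conjugacy of two stars through a fixed line implies conjugacy under the stabilizer of that line; the orbit count $4\cdot 36/3=48$ is then correct.

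Two of your intermediate justifications do not hold up as stated, although both conclusions are true (and are likewise asserted without detailed proof in the paper). First, a third line of $S_1$ in the span of two orthogonal lines $\ell_1,\ell_2$ cannot ``have to be orthogonal to both'': a line in that plane orthogonal to $\ell_1$ is $\ell_2$ itself. The correct argument is that a third line would have to make angle $\pi/3$ with both (since $S_1$ is a $3$-system), and writing a unit spanning vector as $v_1a+v_2b$ forces $|a|^2+|b|^2=1$ while $|a|=|b|=1/2$, a contradiction. Second, and more substantively, the elements of $W(D_4)$ you invoke do not act transitively on the six orthogonal pairs in a frame: on the four lines of the real frame $\{(1,\pm 1,0,0)\HH,(0,0,1,\pm 1)\HH\}$, the subgroup of $W(D_4)$ stabilizing the frame induces only the Klein four-group $\{e,(12)(34),(13)(24),(14)(23)\}$ (an even sign change such as flipping coordinates $1$ and $3$ gives $(12)(34)$, the block swap $(a,b,c,d)\mapsto(c,d,a,b)$ gives $(13)(24)$, and no signed permutation with an even number of sign changes induces a transposition or a $3$-cycle on these four lines). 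That group has three orbits of size two on the six pairs, not one. Fusing them requires elements of $W(S_1)$ outside the monomial subgroup --- for instance one can use the transitive action of the $S_4$ inside a flat of type $A_3$ on the three $A_1\times A_1$'s it contains, combined with the incidence structure between $A_1\times A_1$'s and $A_3$'s, or exhibit an explicit product of reflections in quaternionic lines stabilizing the frame. As written, your argument for the single $W(S_1)$-orbit of $A_1\times A_1$-flats is incomplete.
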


The sum of the $\mu$-invariants (or $e$-invariants, since all $2$-flats are real) for the $2$-flats of $S_1$ is therefore
$$\sum_{\mathrm{rk}(X)=2} \mu(X)=54+2 \cdot 192=438.$$

\subsection{The three-dimensional flats of $S_1$} Suppose now that $X$ is a $3$-flat of $S_1$ (a three dimensional subspace of $V=\HH^4$ that is spanned by three lines of $S_1$). There are three mutually exclusive possibilities for $X$: 

\begin{itemize}
\item[Type I] $X$ contains only lines orthogonal to one another, is of type $A_1 \times A_1 \times A_1$. There are $36$ flats of this type, all conjugate to one another since they are the orthogonal complements of the $36$ lines of $S_1$.
\item[Type II] $X$ contains at least one pair of orthogonal lines and at least one pair of lines at angle $\pi/3$ and is of type $A_3$; 
\item[Type III] $X$ contains only lines at angle $\pi/3$ to one another, is of type $G(3,3,3)$. 
\end{itemize} For type I, we may take the span of any three lines belonging to the same frame. There are therefore $36=4 \cdot 9$ subspaces of type I, each of which contains three $A_1 \times A_1$'s. Before describing the type II and III $3$-flats in detail we give an example of each:
\begin{itemize}
\item[(a)] The six vectors $\{(1,-1,0,0),(1,0,-1,0),(1,0,0,-1),(0,1,-1,0),(0,1,0,-1),(0,0,1,-1)\}$ span six distinct lines belonging to $S_1$, and the space generated by these six lines is a $3$-flat of type II containing precisely these six lines of $S_1$. 
\item[(b)] The nine vectors $$\{(0,1,-1,0),(0,1,0,-1),(0,0,1,-1),(1,i,j,k),(1,j,i,k),(1,i,k,j),(1,k,j,i),(1,j,k,i),(1,k,i,j)\}$$ span nine distinct lines belonging to $S_1$, and the space generated by these $9$ lines is a $3$-flat of type III containing precisely these nine lines of $S_1$.
\end{itemize}
\begin{lemma}
\item[(a)] Each $3$-flat of $S_1$ of type $A_3$ contains precisely six lines, partitioned into three disjoint $A_1 \times A_1$'s, and contains precisely four $A_2$'s. Each $A_1 \times A_1$ of $S_1$ is contained in precisely eight $3$-flats of type II, and there are exactly $144$ total $3$-flats of type II. These are divided into four $W(S_1)$-orbits according to the four $W(S_1)$-orbits on the flats of type $A_2$ (all flats of type $A_2$ in a given flat of type $A_3$ are conjugate to one another). 
\item[(b)] Each $3$-flat of $S_1$ of type $G(3,3,3)$ contains precisely nine lines and precisely twelve $A_2$'s. Each $3$-star of $S_1$ is contained in precisely four such $3$-flats, and there are exactly $64$ $3$-flats of type $G(3,3,3)$. These form a single $W(S_1)$-orbit.
\end{lemma}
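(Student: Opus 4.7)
The plan is to apply the Goethals-Seidel (GS) decomposition lemma stated just above to the four representative 3-stars coming from the four $W(S_1)$-orbits of $A_2$-flats listed previously, and then combine the resulting local data with pair-counting.

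First I would pick, for each of the four $A_2$-orbit representatives, the associated 3-star $\{a,b,c\}$ and run through the 36 lines of $S_1$ (the 12 root lines $\pm e_i\pm e_j$ together with the 24 lines in the $G(2,2,4)$-orbit of $(1,i,j,k)/\sqrt{2}$), classifying each line according to which of $a$, $b$, $c$ it is orthogonal to. This determines $|\Delta|$, $|\Gamma_a|=|\Gamma_b|=|\Gamma_c|$ (equality forced by the visible $S_3$-symmetry of the 3-star), and $|\Lambda|$, subject to $|\Delta|+3|\Gamma_a|+|\Lambda|+3=36$. The case analysis shows uniformly that $|\Delta|=0$, $|\Gamma_a|=3$, and $|\Lambda|=24$ in all four cases. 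By the preceding lemma this means every 3-star lies in exactly $3$ flats of type $A_3$, in exactly $|\Lambda|/6=4$ flats of type $G(3,3,3)$, and in no flats of type $A_2\times A_1$.

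For part (a), the internal structure of an $A_3$-root system (six root lines, three orthogonal pairs which are the maximal $A_1\times A_1$-subsystems, four $A_2$-subsystems indexed by the three-element subsets of $\{1,2,3,4\}$) is classical. Double counting pairs $(X,Y)$ with $X\subseteq Y$, $X$ of type $A_2$ and $Y$ of type $A_3$, yields $4\cdot(\text{number of }A_3\text{-flats})=192\cdot 3$, so there are $144$ flats of type $A_3$; double counting pairs of types $(A_1\times A_1, A_3)$ gives $3\cdot 144=54\cdot k$ and therefore $k=8$. By \cite{BST} the parabolic subgroup $W(A_3)\cong S_4$ sits inside $W(S_1)$ and acts transitively on the four $A_2$-subsystems of a given $A_3$-flat, so these four $A_2$'s all lie in a single $W(S_1)$-orbit; this gives a well-defined map from $A_3$-orbits to $A_2$-orbits, visibly surjective. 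To conclude that there are exactly four $A_3$-orbits (each therefore of size $48\cdot 3/4=36$), one verifies that the stabilizer in $W(S_1)$ of a fixed $A_2$-flat permutes the three $A_3$-flats containing it transitively.

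For part (b), a $G(3,3,3)$-root system has 9 lines and 12 $A_2$-subsystems, as already seen in the analysis of $G(3,3,4)$. Double counting pairs $(X,Y)$ with $X$ of type $A_2$ and $Y$ of type $G(3,3,3)$ gives $12\cdot(\text{number of }G(3,3,3)\text{-flats})=192\cdot 4=768$, so there are 64 such flats. The single-orbit assertion is the most delicate remaining point: it can be obtained either by showing the setwise stabilizer of the reference $G(3,3,3)$-flat in $W(S_1)$ has order $|W(S_1)|/64=6912/64=108=2\cdot|W(G(3,3,3))|$ (with the extra involution preserving the flat but not lying in the parabolic), or by constructing, for any two $G(3,3,3)$-flats, an explicit element of $W(S_1)$ (built from reflections $r_\ell$ with $\ell\in S_1$) taking one to the other. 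This transitivity claim is the main obstacle in the whole argument; everything else reduces, via the GS lemma and standard pair-counting, to the uniform elementary computation in the first step that $|\Delta|=0$, $|\Gamma_a|=3$, and $|\Lambda|=24$ for every 3-star.
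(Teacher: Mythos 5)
Your proposal is correct and follows essentially the same route as the paper: the Goethals--Seidel decomposition of a $3$-star (which does come out uniformly as $|\Delta|=0$, $|\Gamma_a|=3$, $|\Lambda|=24$, since orthogonality in $S_1$ is an equivalence relation with the nine frames as classes) combined with double counting of incidences. The only cosmetic differences are that the paper obtains the count of $144$ via the $A_1\times A_1\subset A_3$ incidence rather than the $A_2\subset A_3$ one, and it disposes of the conjugacy statements (the point you rightly flag as the delicate step) by citing Bellamy--Schmitt--Thiel rather than arguing them directly.
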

\begin{proof}
This count follows the same pattern as usual. First consider the flat of type $A_1 \times A_1$ spanned by the vectors $(0,1,-1,0)$ and $(0,1,1,0)$. Of the $34$ lines of $S_1$ not contained in this flat precisely two are perpendicular to both vectors $(0,1,-1,0)$ and $(0,1,1,0)$ and the remaining $32$ lines have angle $\pi/3$ with both. It follows that these $32$ lines are partitioned into $8$ subsets corresponding to the flats of type $A_3$ containing our given $A_1 \times A_1$. Now counting pairs $(X,Y)$ where $X$ is of type $A_1 \times A_1$ and $Y$ is of type $A_3$ in two ways shows that $54 \cdot 8=3x$ where $x$ is the total number of flats of type $A_3$, implying $x=18 \cdot 8=144$ as claimed. 

The count of the flats of type $G(3,3,3)$ we compute the GS decomposition corresponding to a flat of type $A_2$. We illustrate this with the flat spanned by $(0,1,-1,0)$ and $(0,0,1,-1)$. Since $(x,y,z,w)$ is orthogonal to this flat if and only if $y=z=w$, there are no lines of $S_1$ orthogonal to this flat. The lines that are orthogonal to $(0,1,-1,0)$ are precisely the other three lines in its frame, spanned by $(0,1,1,0)$, $(1,0,0,-1)$, and $(1,0,0,1)$. Similarly there are precisely $3$ lines orthogonal to each of the other lines in our $A_2$. The remaining $24$ lines must therefore be partitioned into $4$ sets of $6$ lines each, corresponding to $5$ flats of type $G(3,3,3)$ containing the given $A_2$. Exactly the same reasoning holds for the other orbits of flats of type $A_2$, so if $x$ is the number of flats of type $G(3,3,3)$ we have
$$12x=4\cdot 192 \quad \implies \quad x=64$$ as claimed. The results of \cite{BST} imply the claims about orbits.
\end{proof}

It follows that the sum of the $\mu$-invariants of the $3$-flats is
$$\sum_{\mathrm{rk}(X)=3} \mu(X)=36+6\cdot144+16 \cdot 64=1924$$ while the sum of the $e$-invariants is
$$\sum_{\mathrm{rk}(X)=3} e(X)=36+6 \cdot 144+20 \cdot 64=2180.$$ Hence the $\mu$- and $e$-invariants of $S_1$ are
$$\mu(S_1)=-1+36-438+1924=1521 \quad \text{and} \quad e(S_1)=6912-2180-438-36-1=4257$$ and hence $p_W(t)$ and $c_W(t)$ are as claimed in the introduction.

\section{The quaternionic reflection group of type $S_2$} 

\subsection{The line system} The line system for this group is the union of the line system for the Weyl group of type $F_4$ and the orbit, under $W(F_4)$, of the line spanned by $(1,i,j,k)$. The resulting group $W(S_2)$ has one orbit of $72$ root lines and hence one conjugacy class of reflections. Fixing the line spanned by $(1,0,0,0)$, the remaining lines may be divided into classes according to the angle they make with this line. There are:
\begin{enumerate}
\item[(a)] Nine lines perpendicular to $(1,0,0,0)$, spanned by the vectors $\epsilon_i$ and $\epsilon_i \pm \epsilon_j$ for $2 \leq i < j \leq 4$.

\item[(b)] Six lines at angle $\pi/4$ to $(1,0,0,0)$, spanned by the vectors $\epsilon_1 \pm \epsilon_i$ for $2 \leq i \leq 4$.

\item[(c)] Fifty-six lines at angle $\pi/3$ to $(1,0,0,0)$, spanned by the vectors $(1,\pm 1,\pm 1, \pm 1)$ and $(1, \pm p, \pm q, \pm r)$ with $\{p,q,r \}=\{i,j,k\}$. 
\end{enumerate}

\subsection{The $2$-flats} We fix a root line, which up to the action of $W(S_2)$ we may assume to be spanned by $\epsilon_1=(1,0,0,0)$. By direct calculation we find that the span of $\epsilon_1$ and another line in the line system $S_2$ is of one of the following four types:
\begin{enumerate}
\item[(a)] The direct product $A_1 \times A_1$ for the lines spanned by vectors of the form $\epsilon_i \pm \epsilon_j$ with $2 \leq i < j \leq 4$. There are six of these.
\item[(b)] A line system of type $B_2$ for the lines spanned by vectors of the form $\epsilon_1 + \epsilon_j$ with $2 \leq j \leq 4$. There are three of these.
\item[(c)] A line system of type $A_2$ for the lines spanned by vectors of the form $(1,\pm 1,\pm 1, \pm 1)$  There are four such $2$-flats.
\item[(d)] A line system of type $A_2$ for the lines spanned by vectors of the form $(1, \pm p,\pm q,\pm r)$ with $\{p,q,r\}=\{i,j,k\}$. There are twenty-four such $2$-flats.
\end{enumerate} Moreover, except for the flats of type $A_2$, each type of $2$-flat is a single orbit for the action of $W(S_2)$. The flats of type $A_2$ fall into two orbits. The flat containing the lines spanned by $(1,0,0,0),(1,i,j,k)$, and $(1,-i,-j,-k)$ has the property that its orthogonal complement does not contain a root line for $S_2$. The flat containing the lines spanned by $(1,0,0,0),(1,1,1,1)$, and $(1,-1,-1,-1)$ has orthogonal complement consisting of vectors of the form $(0,x,y,z)$ with $x+y+z=0$, and hence is another flat of type $A_2$. Since the stabilizer of $(1,0,0,0)$ in $W(S_2)$ contains at least all signed permutations of the last three coordinates, every $2$-flat of type $A_2$ that contains the line spanned by $(1,0,0,0)$ is conjugate to one of these two. Since every line is conjugate to $(1,0,0,0)$, it follows that there are precisely these two orbits of $2$-flats of type $A_2$.

Now by counting pairs $(X,Y)$ where $X$ is a root line and $Y$ is a $2$-flat of a given type containing $X$ we obtain the following:

\begin{enumerate}
\item[(a)] There are $216$ $2$-flats of type $A_1 \times A_1$.
\item[(b)] There are $54$ $2$-flats of type $B_2$.
\item[(c)] There are $96$ $2$-flats of type $A_2$ whose orthogonal complement is another flat of type $A_2$.
\item[(d)] There are $576$ $2$-flats of type $A_2$ whose orthogonal complement does not contain a root line.
\end{enumerate}

The sum of the $\mu$-invariants (equivalently, $e$-invariants) for these two flats is therefore
$$\sum_{\mathrm{rk}(X)=2} \mu(X)=216+3\cdot54+2\cdot96+2\cdot 576=1722$$

\subsection{The $3$-flats} We will see below that there are the following types of $3$-flats in $S_2$:
\begin{enumerate}
\item[(a)] $288$ $3$-flats of type $A_1 \times A_2$,
\item[(b)] $864$ $3$-flats of type $A_3$,
\item[(c)] $72$ $3$-flats of type $B_3$, 
\item[(d)] $256$ $3$-flats of type $G(3,3,3)$, and
\item[(e)] $108$ $3$-flats of type $G(4,4,3)$.
\end{enumerate}

Given this, the sum of their $\mu$-invariants is
$$\sum_{\mathrm{rk}(X)=3} \mu(X)=2 \cdot 288+6\cdot864+15\cdot72+16\cdot256+30\cdot108=14176.$$ And finally, the $\mu$-invariant of $\HH^4$ for this line system is
$$-1+72-1722+14176=12525.$$ Thus the Poincar\'e polynomial for $S_2$ is
$$p(t)=1+72t+1722t^2+14176t^3+12525t^4=(1+t)(1+25t)(1+46t+501t^2).$$

We begin by analyzing which $3$-flats arise by adjoining a line to a $2$-flat of type $B_2$. Since all such $2$-flats are conjugate, we may assume that our $B_2$ is the one spanned by $\epsilon_1=(1,0,0,0)$ and $\epsilon_2=(0,1,0,0)$. 
\begin{enumerate}
\item[(a)] Adjoining the line spanned by a vector of the form $(1,\pm p,\pm q,\pm r)$ gives a $3$-flat of type $G(4,4,3)$ (this can be checked by observing that the equation for the resulting subspace is of the form $x_4=\pm r q^{-1} x_3$). There are six $3$-flats of this form containing the given $B_2$.
\item[(b)] Adjoining $\epsilon_3,\epsilon_4$, $\epsilon_3-\epsilon_4$, or $\epsilon_3+\epsilon_4$ gives a $3$-flat of type $B_3$. There are four of these.
\end{enumerate}

Now counting pairs $(X,Y)$ consisting of a $2$-flat $X$ of type $B_2$ and a  $3$-flat $Y$ of type $G(4,4,3)$ such that $X \subseteq Y$ implies that there are $108$ $3$-flats of type $G(4,4,3)$ in $S_2$. Similar reasoning shows that there are $72$ $3$-flats of type $B_3$ in $S_2$. 

Next we consider which $3$-flats arise by adjoining a line to a $2$-flat of type $A_1 \times A_1$. Again, all such $2$-flats are conjugate, so we may assume that our $A_1 \times A_1$ is the one spanned by $\epsilon_1$ and $\epsilon_2-\epsilon_3$. The remaining $70$ lines of $S_2$ are then partitioned according to the $3$-flat that they, together with $A_1 \times A_1$, generate. The result is:
\begin{enumerate}
\item[(a)] Adjoining a line spanned by a vector of the form $(1,\pm p ,\pm q, \pm r)$ for $\{p,q,r \}=\{i,j,k\}$ produces a $3$-flat of type $A_3$. There are $12$ of these containing the given $A_1 \times A_1$.
\item[(b)] Adjoinining a line with last coordinate $x_4=0$ or with $x_2+x_3=0$ gives a $3$-flat of type $B_3$. There are two of these containing the given $A_1 \times A_1$.
\item[(c)] The remaining lines produce four $A_2 \times A_1$'s containing the given $A_1 \times A_1$.
\end{enumerate} 

Now we consider the $2$-flats of type $A_2$ with the property that the orthogonal complement is another $2$-flat of type $A_2$. Since these are all conjugate we may as well assume that we are working with the $2$-flat spanned by $(0,1,-1,0)$ and $(0,0,1,-1)$. 
\begin{enumerate}
\item[(a)] Adjoining a line spanned by a vector of the form $(1,\pm p , \pm q, \pm r)$ gives a $2$-flat containing precisely the lines of the original $A_2$ plus the six permutations of $(1,\pm p,\pm q,\pm r)$. This is a $3$-flat of type $G(3,3,3)$, and there are therefore $8$ three-flats of type $G(3,3,3)$ containing the given $2$-flat.
\item[(b)] There are three root lines perpendicular to the given $A_2$, and adjoining any one of them produces an $A_2 \times A_1$. There are therefore three subspaces of this type containing the given $2$-flat.
\item[(c)] The remaining $18$ lines are accounted for by three $3$-flats of type $B_3$ containing the given $2$-flat of type $A_2$. 
\end{enumerate}

Finally we consider the $2$-flats of type $A_2$ whose orthogonal complement does not contain any line. We will consider the result of adjoining a line to the span of $(1,0,0,0)$ and $(1,i,j,k)$. 

\begin{enumerate}
\item[(a)] Adjoining one of $(0,1,0,0)$, $(0,0,1,0)$, or $(0,0,0,1)$ produces a $3$-flat with twelve lines, probably of type $G(4,4,3)$. There are three of these containing the given $2$-flat.
\item[(b)] Adjoining a lines of the form $(1,\pm 1,\pm 1,\pm 1)$ produces a $3$-flat with nine lines, probably of type $G(3,3,3)$. There are four of these containing the given $2$-flat.
\item[(c)] The remaining $18$ lines are accounted for by six $3$-flats of type $A_3$ containing the given $2$-flat.
\end{enumerate}

\section{The quaternionic reflection group of type $S_3$}

\subsection{The line system of type $S_3$} The set of root lines consists of those for the group $W_4(Q,\{\pm 1\})$ together with the $W_4(Q,\{\pm 1\})$-orbit of the line spanned by $(1,1,1,1)$. The lines for $W_4(Q,\{\pm 1\})$ are spanned by the vectors $(1,0,0,0)$, $(0,1,0,0)$, $(0,0,1,0)$, and $(0,0,0,1)$ together with all lines spanned by vectors of the form $e_i+e_j q$ where $1 \leq i < j \leq 4$ and $q \in Q$. The angle between any two of these lines is one of the three numbers $\pi/2$, $\pi/3$, and $\pi/4$. According to Cohen \cite{Coh} these lines form a single $W(S_3)$-orbit.

\subsection{The $2$-flats of $S_3$} We fix the line $\ell_0$ spanned by $(1,0,0,0)$ and consider the span of this line together with one more line of $S_3$. We note that the orthogonal complement of $\ell_0$ is a flat of type $W_3(Q,\{ \pm 1\})$, which acts in the obvious fashion on the last three coordinates. 

Firstly, taking the span of $\ell_0$ and a line spanned by a vector of the form $(0,1,p,0)$ or $(0,1,0,p)$ or $(0,0,1,p)$ gives a flat of type $A_1 \times A_1$ containing only these two lines. There are $24$ such flats containing $\ell_0$, forming a single $W_3(Q,\{\pm 1\}$-orbit. It follows that there are $24 \cdot 90$ flats of type $A_1 \times A_1$ in $S_3$, forming a single $W(S_3)$-orbit.

Secondly, taking the span of $\ell_0$ and any one of the lines spanned by the vectors $(0,1,0,0)$, $(0,0,1,0)$, or $(0,0,0,1)$ produces a $2$-flat of type $W_2(Q,\{\pm 1\}$. There are $3$ such flats containing $\ell_0$, all conjugate by its stabilizer, and it follows that there is a single $W(S_3)$-orbit of flats of type $W_2(Q,\{\pm 1 \})$ containing $54$ flats.

Finally, taking the span of $\ell_0$ and a line spanned by a vector of the form $(1,p,q,r)$ with $p,q,r \in Q$ and $pqr=\pm 1$ gives a $2$-flat of type $A_2$. There are $128$ such lines, producing $64$ flats of type $A_2$ containing $\ell_0$, all conjugate by the stabilizer $W_3(Q,\{\pm 1\})$ of $(1,0,0,0)$. Hence there is a single $W(S_3)$-orbit of flats of type $A_2$, containing precisely $60 \cdot 64$ flats. 

\begin{lemma}
For the line system of type $S_3$, there are the following $2$-flats and no others, each type comprising a singe $W(S_3)$-orbit:
\begin{itemize}
\item[(a)] $2 \cdot 3^3$ flats of type $W_2(Q,\{\pm1 \})$,
\item[(b)] $2^8 \cdot 3 \cdot 5$ flats of type $A_2$, and
\item[(c)]  $2^4 \cdot 3^3 \cdot 5$ flats of type $A_1 \times A_1$.
\end{itemize}
The sum of the $\mu$-invariants of these flats is $10326$ and the sum of their $e$-invariants is $10974$. Moreover, the orthogonal complement of a flat of a given type is another flat of the same type.
\end{lemma}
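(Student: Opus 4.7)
The plan is to consolidate the per-line case analyses of the paragraphs immediately above into global counts and then settle the orbit, invariant-sum, and orthogonal-complement claims by checking one representative per type. The key leverage throughout is the transitivity of $W(S_3)$ on the $180$ lines of $S_3$.

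First I will establish completeness and count. Fixing $\ell_0 := e_1 \HH$, the preceding paragraphs already partition the $179$ remaining lines of $S_3$ into those producing, together with $\ell_0$, a flat of type $A_1 \times A_1$ ($24$ flats), of type $W_2(Q, \{\pm 1\})$ ($3$ flats), or of type $A_2$ ($64$ flats). Since $W(S_3)$ is line-transitive, every $2$-flat is $W(S_3)$-conjugate to one containing $\ell_0$, and hence is of one of the three listed types. A double count of pairs $(\ell, X)$ with $\ell \subseteq X$ divided by the number of lines per flat ($2$, $3$, and $10$ respectively) yields
\[
\frac{180 \cdot 24}{2} = 2^4 \cdot 3^3 \cdot 5, \qquad \frac{180 \cdot 64}{3} = 2^8 \cdot 3 \cdot 5, \qquad \frac{180 \cdot 3}{10} = 2 \cdot 3^3.
\]
Each single-orbit statement will follow because the stabilizer of $\ell_0$ in $W(S_3)$ acts on $\ell_0^\perp \cong \HH^3$ as $W_3(Q, \{\pm 1\})$, which by direct inspection of the explicit parametrizations above acts transitively on the $24$, $3$, and $64$ flats through $\ell_0$ of each type.

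Next, the invariant sums will drop out from the per-type values recorded in Section 4, namely $\mu = e = 1$ for $A_1 \times A_1$, $\mu = e = 2$ for $A_2$, and $\mu = 9$, $e = 21$ for $W_2(Q, \{\pm 1\})$, giving $2160 + 2 \cdot 3840 + 9 \cdot 54 = 10326$ and $2160 + 2 \cdot 3840 + 21 \cdot 54 = 10974$. Orthogonal complementation is a $W(S_3)$-equivariant involution on the set of $2$-flats (since $W(S_3) \subseteq U(V)$), so it preserves the orbit structure and it suffices to check one representative of each class. I propose the following: the complement of $e_1 \HH \oplus e_2 \HH$ is $e_3 \HH \oplus e_4 \HH$, again of type $W_2(Q,\{\pm 1\})$; the complement of the $A_2$ spanned by $e_1$ and $(1,1,1,1)$ contains exactly the three root lines $e_2 - e_3, e_3 - e_4, e_2 - e_4$, again an $A_2$; and for a representative $A_1 \times A_1$ such as the span of $e_1$ and $(0, 1, i, 0)$, a direct computation identifies the complement as the span of $e_4$ and $(0, 1, -i, 0)$, once more of type $A_1 \times A_1$.

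The main obstacle is ensuring the trichotomy in the first step is genuinely exhaustive, since a priori some other rank-$2$ parabolic could appear (for example a $B_2$, which shares the angle set with $W_2(Q,\{\pm 1\})$). This is secured by combining the Bellamy-Schmitt-Thiel theorem \cite{BST}, which forces every $2$-flat to be the support of a rank-$2$ parabolic reflection subgroup, with Cohen's \cite{Coh} rank-$2$ classification; the explicit line-by-line enumeration through $\ell_0$ verifies that among the admissible candidates only the three listed types are actually realized, because (for example) each coordinate $2$-flat of $S_3$ carries all $8$ lines $e_i + e_j q$, $q \in Q$, ruling out $B_2$ in favor of $W_2(Q,\{\pm 1\})$.
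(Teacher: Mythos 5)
Your proposal is correct and follows essentially the same route as the paper: fix the line $\ell_0=e_1\HH$, use the explicit description of $S_3$ to sort the remaining $179$ lines into the $24+3+64$ flats through $\ell_0$ of the three types, double-count pairs $(\ell,X)$ to get the global counts, use transitivity of the stabilizer $W_3(Q,\{\pm1\})$ for the single-orbit claims, and verify the orthogonal-complement statement on one representative of each type. Your representatives for the complement check differ slightly from the paper's (e.g.\ the $A_2$ spanned by $e_1$ and $(1,1,1,1)$ rather than by $(0,1,-1,0)$ and $(0,0,1,-1)$), and you make the exhaustiveness of the trichotomy more explicit, but the argument is the same.
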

\begin{proof}
All but the last assertion follows from what we have done above. The last assertion follows from the previous ones by noting that it is true for the particular flats spanned by: $(1,0,0,0)$ and $(0,1,0,0)$; $(0,1,-1,0)$ and $(0,0,1,-1)$; $(1,0,0,0)$ and $(0,1,i,0)$.
\end{proof}

\subsection{The $3$-flats of $S_3$} 

\begin{lemma}
For the line system of type $S_3$, there are the following $3$-flats and no others, each type comprising a single $W(S_3)$-orbit:
\begin{itemize}
\item[(a)] $2^2 \cdot 3^2 \cdot 5$ flats of type $W_3(Q,\{\pm 1\})$, 
\item[(b)] $2^9 \cdot 5$ flats of type $G(3,3,3)$,
\item[(c)] $2^7 \cdot 3^3 \cdot 5$ flats of type $A_3$, and
\item[(d)] $2^8 \cdot 3^2 \cdot 5$ flats of type $A_2 \times A_1$.
\end{itemize} The sum of the $\mu$-invariants of these flats is $195220$ and the sum of their $e$-invariants is $272420$. Moreover, the flats of type $W_3(Q,\{\pm 1\})$ are precisely the orthogonal complements of the lines of $S_3$.
\end{lemma}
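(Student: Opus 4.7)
The plan is to follow the strategy used in the preceding sections: fix a representative $2$-flat of each type (all single $W(S_3)$-orbits, by the previous lemma), enumerate the lines of $S_3$ not in the fixed $2$-flat, and classify each by the rank-$3$ flat it generates together with the $2$-flat; then double counting pairs $(X,F)$ with $X$ a $2$-flat and $F$ a rank-$3$ flat containing $X$ produces the global count of $3$-flats of each type. The first step is to restrict the possible rank-$3$ flat types: by Cohen's classification, restricted to rank-$3$ reflection groups generated by order-$2$ reflections whose pairwise products have order at most $4$ (the only angles occurring in $S_3$ are $\pi/2$, $\pi/3$, and $\pi/4$), the candidates are $W_3(Q,\{\pm1\})$, $G(4,4,3)$, $B_3$, $A_3$, $G(3,3,3)$, $A_2\times A_1$, and $A_1^{\times 3}$. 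The groups $G(4,4,3)$ and $B_3$ each contain a $B_2$-parabolic, but the previous lemma shows $S_3$ has no $B_2$-flats, so these types do not occur. For $A_1^{\times 3}$, a direct check on the representative $A_1\times A_1$-flat spanned by $e_1$ and $(0,1,1,0)$ shows that extending by either line in its orthogonal complement yields a $W_3$-flat (i.e., produces $27$ lines, not $3$), so $A_1^{\times 3}$ also cannot arise.

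Next I would count the $W_3(Q,\{\pm1\})$-flats: for $\ell=\langle e_1\rangle$, the lines of $S_3$ orthogonal to $\ell$ are precisely the $27$ lines of a $W_3(Q,\{\pm1\})$-arrangement on the last three coordinates, since no element of the $W(S_3)$-orbit of $(1,1,1,1)$ has a zero coordinate. Transitivity of $W(S_3)$ on the $180$ lines then shows that every orthogonal complement is a $W_3$-flat and that these form a single orbit of $180$ flats; this also establishes the final assertion of the lemma, since conversely each $W_3$-flat is the orthogonal complement of its orthogonal line, which must lie in $S_3$ as a conjugate of $\langle e_1\rangle$.

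For the remaining three types I would fix a representative $A_2$-flat and a representative $A_1\times A_1$-flat and classify each line of $S_3$ outside the chosen flat by the rank-$3$ flat it generates. Using the local incidence data (each $W_3$-flat contains $3$ $W_2$'s, $64$ $A_2$'s, and $24$ $A_1\times A_1$'s from the subsection on $W_3(Q,\{\pm1\})$; each $G(3,3,3)$ contains $12$ $A_2$'s; each $A_3$ contains $4$ $A_2$'s and $3$ $A_1\times A_1$'s; and each $A_2\times A_1$ contains $1$ $A_2$ and $3$ $A_1\times A_1$'s) double counting with the totals from the previous lemma produces the stated counts $2^9\cdot 5$, $2^7\cdot 3^3\cdot 5$, and $2^8\cdot 3^2\cdot 5$. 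The $\mu$- and $e$-invariant sums then follow from these counts multiplied by the known values $\mu(W_3(Q,\{\pm1\}))=153$, $e(W_3(Q,\{\pm1\}))=525$, $\mu(G(3,3,3))=16$, $e(G(3,3,3))=20$, $\mu(A_3)=e(A_3)=6$, and $\mu(A_2\times A_1)=e(A_2\times A_1)=2$, giving $27540+40960+103680+23040=195220$ and $94500+51200+103680+23040=272420$ as claimed; the single-orbit assertion within each type follows from \cite{BST}. The main obstacle is the local analysis at the $A_2$-representative: classifying each of the $177$ external lines by its angle pattern with the three lines of the $A_2$ and by the resulting $3$-flat is straightforward but tedious, requiring careful quaternionic arithmetic (and in particular a case analysis separating lines orthogonal to the $A_2$, lines at angle $\pi/4$ to at least one line of the $A_2$, and lines at angle $\pi/3$ to all three, with the last case splitting further into $A_3$ and $G(3,3,3)$ configurations).
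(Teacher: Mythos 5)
Your overall strategy is the same as the paper's (double counting pairs of nested flats against representative $2$-flats, with orbit statements outsourced to \cite{BST}), and every number you state is correct. But the proposal defers precisely the step that constitutes the paper's proof: the local incidence data at a representative $A_2$-flat. Without knowing how many $3$-flats of each type contain a given $A_2$, none of the double counts in (b), (c), (d) can be run. The paper carries this out for the $A_2$ spanned by $(0,1,-1,0)$ and $(0,0,1,-1)$: it lies in $3$ flats of type $W_3(Q,\{\pm 1\})$ (accounting for $72$ external lines), $8$ of type $G(3,3,3)$ (the lines $(1,p,q,r)$ with $p,q,r \in \{\pm i,\pm j,\pm k\}$, accounting for $48$), $18$ of type $A_3$ (the lines $(1,q,0,0)$, $(1,q,q,\pm 1)$ for $q \neq \pm 1$, accounting for $54$), and $3$ of type $A_2 \times A_1$ (the lines $(1,\pm 1,\pm 1, \pm 1)$ with equal signs and $(1,0,0,0)$), which exhausts all $177$ external lines. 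These multiplicities $8$, $18$, $3$ are what feed the equations $12x = 8 \cdot 2^8 \cdot 3 \cdot 5$, $4x = 18 \cdot 2^8 \cdot 3 \cdot 5$, $x = 3 \cdot 2^8 \cdot 3 \cdot 5$. Calling this ``straightforward but tedious'' and omitting it leaves the lemma unproved; it is the content of the argument.

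Two smaller gaps. First, in (a) your converse is circular: you assert that a flat of type $W_3(Q,\{\pm 1\})$ is the orthogonal complement of a line ``which must lie in $S_3$,'' but there is no a priori reason the orthogonal line of such a flat belongs to $S_3$. The paper closes this by showing that for the $W_2(Q,\{\pm 1\})$-flat spanned by $(1,0,0,0)$ and $(0,1,0,0)$, the span of that flat with \emph{any} further line of $S_3$ sits inside the orthogonal complement of some line of $S_3$; since every $W_3(Q,\{\pm 1\})$-type flat contains a $W_2(Q,\{\pm 1\})$-flat, every such flat is an orthogonal complement of a line, and the count $3x = 2 \cdot 3^3 \cdot 10$ follows. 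Second, your list of candidate rank-$3$ types from Cohen's classification is incomplete: the quaternionic types $Q$ and $W_3(\Gamma',\Delta')$ for other $\Gamma' \leq \HH^\times$ also have all angles in $\{\pi/2,\pi/3,\pi/4\}$ and must be excluded (your $2$-flat obstruction does dispose of them, e.g.\ $Q$ contains $G(4,2,2)$-flats, but they need to appear on the list). The paper sidesteps classification entirely by verifying that the known flat types account for all $177$, $178$, and $170$ external lines at a representative $A_2$, $A_1 \times A_1$, and $W_2(Q,\{\pm 1\})$ respectively, which is both complete and independent of any enumeration of candidates; you would do well to adopt that check in place of the classification argument.
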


\begin{proof}
To prove (a), we note that given the flat spanned by the vectors $(1,0,0,0)$ and $(0,1,0,0)$, the orthogonal complement consists of all vectors of the form $(0,0,a,b)$. Now given any line $\ell$ of $S_3$, there is always a line of $S_3$ spanned by a vector of the form $(0,0,a,b)$ that is orthogonal to $\ell$, as can be checked case-by-case. It follows that the span of $\ell$ and the vectors $(1,0,0,0)$ and $(0,1,0,0)$ is contained in the orthogonal complement of some line of $S_3$, which is a flat of type $W_3(Q,\{ \pm 1 \})$. Thus the $170$ lines of $S_3$ that are not contained in the flat spanned by $(1,0,0,0)$ and $(0,1,0,0)$ are partitioned into $10$ sets of $17$ lines each, corresponding to $10$ flats of type $W_3(Q,\{\pm 1\}$ containing our given flat of type $W_2(Q,\{\pm 1\})$. Since all $W_2(Q,\{\pm 1\})$'s of $S_3$ are conjugate by $W(S_3)$, if there are $x$ total flats of type $W_3(Q,\{\pm 1\})$ then we have $3x=2 \cdot 3^3 \cdot 10$ so that $x=2^2 \cdot 3^2 \cdot 5$ as claimed. It also follows from this argument that each flat of type $W_3(Q,\{\pm 1\})$ is the orthogonal complement of a line of $S_3$, and in particular there is one $W(S_3)$-orbit of such flats.

We now fix the flat of type $A_2$ spanned by $(0,1,-1,0)$ and $(0,0,1,-1)$. It follows from our count in part (a) and the results of the previous lemma that this is contained in $3$ flats of type $W_3(Q,\{\pm 1\})$. Adjoining any line spanned by a vector of the form $(1,p,q,r)$ for $p,q,r$ equal to $i,j,k$ up to a sign produces a flat of type $G(3,3,3)$; there are $8$ of these. Adjoining a vector of the form $(1,q,0,0)$ for $q \in Q$ with $q \neq \pm 1$ gives a flat of type $A_3$, as does adjoining a line spanned by a vector of the form $(1,q,q,1)$ or $(1,q,q,-1)$ for $q \in Q$ with $q \neq \pm 1$; there are thus $18$ flats of type $A_3$ containing our given $A_2$. Finally, each of the three lines spanend by $(1,1,1,1)$, $(1,-1,-1,-1)$ and $(1,0,0,0)$ produces an $A_2 \times A_1$, and we have now accounted for all $177$ lines of $S_3$ not contained in our given $A_2$. The counts in cases (b), (c) and (d) now follow by the same idea as above. 

To see that there are no other flats, one computes using the preceding calculation the number of each type of flat counted so far containing a given $A_1 \times A_1$, and observes that this accounts for all $178$ lines of $S_3$ not contained in the given $A_1 \times A_1$. Finally, the claim about conjugacy follows from the calculation in \cite{BST}, and it is routine to check that the sum of the $\mu$-invariants is as claimed. 
\end{proof}

\subsection{The $\mu$- and $e$-invariants of $S_3$} Combining the results proved above shows that the $\mu$-invariant of $S_3$ is
$$\mu(S_3)=195220-10326+180-1=185073$$ and since $|W(S_3)|=3317760$ Its $e$-invariant is
$$e(S_3)=3317760-272420-10974-180-1=3034185.$$ This establishes that $p_W(t)$ and $c_W(t)$ are as claimed in the introduction.

\section{The quaternionic reflection group of type $T$}

\subsection{The line system of type $T$}

The line system of type $T$ is the union of the line system of type $H_4$ and the $W(H_4)$-orbit of the line spanned by the vector $(1,i,j,k)$. Somewhat more explicitly, it consists of the lines spanned by the vectors
$$(1,0,0,0),(0,1,0,0),(0,0,1,0),(0,0,0,1),(1,\pm 1, \pm 1, \pm 1),$$ together with all vectors obtained from $(\tau^{-1},1,\tau,0)$ where $\tau=\frac{1+\sqrt{5}}{2}$  by even permutations of the coordinates and arbitrary sign changes (this is the line system $H_4$), together with the lines spanned by the vectors 
$$(1,i^p,j^p,k^p) \quad \text{and} \quad (1,(-i)^p,(-j)^p,(-k)^p)$$ for $p \in I$ in the binary icosahedral group $I$ (this is the orbit of the line spanned by $(1,i,j,k)$ under $W(H_4)$). There are a total of $180$ lines in $T$, forming a single $W(T)$-orbit.

\subsection{The $2$-flats} 

\begin{lemma}
The $2$-flats of $T$ consist of:
\begin{itemize}
\item[(a)] $2 \cdot 3^3 \cdot 5^2$ flats of type $A_1 \times A_1$, forming a single $W(T)$-orbit,
\item[(b)] a $W(T)$-orbit consisting of $2^3 \cdot 3 \cdot 5^2$ flats of type $A_2$, characterized by the property that the orthogonal complement of one is another flat of type $A_2$ (in this same $W(T)$-orbit),
\item[(c)] a $W(T)$-orbit consisting of $2^4 \cdot 3^2 \cdot 5^2$ flats of type $A_2$, characterized by the property that the orthogonal complememnt of one contains no line of $T$,
\item[(d)] $2^3 \cdot 3^3$ flats of type $G(5,5,2)$, comprising a single $W(T)$-orbit. The orthogonal complement of one of these is another flat of type $G(5,5,2)$. 
\end{itemize} The sum of the $\mu$ invariants of these is equal to the sum of their $e$-invariants,
$$2 \cdot 3^3 \cdot 5^2 + 2^3 \cdot 3 \cdot 5^2 \cdot 2+2^4 \cdot 3^2 \cdot 5^2 \cdot 2+2^3 \cdot 3^3 \cdot 4=10614.$$
\end{lemma}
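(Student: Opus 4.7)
My plan is to reduce the problem to an analysis of the $2$-flats containing a single fixed line, exploiting the transitivity of $W(T)$ on the $180$ lines of $T$ established in the previous subsection. I would fix $\ell_0 = (1,0,0,0)\HH$ and classify the remaining $179$ lines of $T$ according to the angle each makes with $\ell_0$. Since $H_4 \subseteq T$ makes $T$ a $5$-system, the lemma relating the order of $r_\ell r_{\ell'}$ to the angle between $\ell$ and $\ell'$ restricts the possible angles to $\pi/2$, $\pi/3$, $\pi/5$, and $2\pi/5$; the span of $\ell_0$ with a line at one of these angles then produces a $2$-flat of type $A_1 \times A_1$, $A_2$, or $G(5,5,2)$ respectively (both $\pi/5$ and $2\pi/5$ give $G(5,5,2)$). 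Thus the list (a)--(d) is exhaustive.

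The bulk of the work is the explicit enumeration in Cohen's coordinates of how many lines of $T$ lie at each angle to $\ell_0$, and in particular a further subdivision of the lines at angle $\pi/3$ according to whether the $A_2$ they span with $\ell_0$ has orthogonal complement containing a line of $T$ or not. For example, the span of $(1,0,0,0)$ and $\tfrac{1}{2}(1,1,1,1)$ has orthogonal complement $\{(0,y,z,w) : y + z + w = 0\}$ which contains the three $H_4$-lines through $(0,1,-1,0)$, $(0,1,0,-1)$, $(0,0,1,-1)$, while the orthogonal complement of the span of $(1,0,0,0)$ and $(1,i,j,k)$ contains no line of $T$. Once the $\pi/3$-lines through $\ell_0$ are separated into these two classes and the orthogonal and $\pi/5$, $2\pi/5$ neighbors have been tallied, a pair-counting argument produces all four totals: if $k$ flats of a given type contain $\ell_0$ and each such flat contains $r$ lines of $T$, then the total number of flats of that type is $180 k / r$.

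The conjugacy-class assertions then follow from \cite{BST}, which guarantees that parabolic subgroups of the same type form a single $W(T)$-orbit; combined with the orthogonal-complement invariant this also shows that the two orbits in (b) and (c) are genuinely distinct. The orthogonal-complement statements in (b), (c), and (d) are $W(T)$-equivariant, so it suffices to verify each on one representative. The claimed sum of $\mu$-invariants then follows from $\mu(A_1 \times A_1) = 1$, $\mu(A_2) = 2$, and $\mu(G(5,5,2)) = 4$, and coincides with the sum of $e$-invariants because all the $2$-flats here are real reflection groups. The principal obstacle will be the bookkeeping needed to separate the two $W(T)$-orbits of $A_2$ flats: the quaternionic lines $(1, i^p, j^p, k^p)$ for $p$ in the binary icosahedral group contribute many more cases than the $H_4$ calculation alone, and it is tracking them correctly that produces the split $600 + 3600$ rather than a single orbit of $A_2$ flats.
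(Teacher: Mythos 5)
Your proposal follows essentially the same route as the paper: fix the line $\ell_0=(1,0,0,0)\HH$, sort the remaining $179$ lines by their angle to $\ell_0$, split the $\pi/3$-neighbors into the $60$ giving $A_2$'s with no line in their orthogonal complement and the $20$ giving the other type, and then count pairs $(\ell, X)$ to get the totals, with the orbit claims checked on representatives. One small caution: being a $5$-system does not by itself exclude angle $\pi/4$ (that would give $G(4,4,2)$ flats), so the absence of $\pi/4$-pairs must come from the explicit coordinate computation you already plan to carry out, which is exactly what the paper does.
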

\begin{proof}
The line spanned by $(1,0,0,0)$ is at angle $\pi/3$ to the lines spanned by the vectors $(1,(\pm i)^p,(\pm j)^p, (\pm k)^p)$ for $p \in I$, the vectors $(1,\pm 1, \pm 1, \pm 1)$, and the vectors 
$$(1,\pm \tau, \pm \tau^{-1},0) \quad (1, \pm \tau^{-1},0,\pm \tau), \quad (1,0, \pm \tau,\pm \tau^{-1}).$$

It is orthogonal to the lines spanned by the vectors $(0,1,0,0)$, $(0,1,0,0)$, $(0,0,0,1)$, and the vectors 
$$(0,\tau^{-1}, \pm \tau, \pm 1), \quad (0,\tau, \pm 1, \pm \tau^{-1}), \quad \text{and} \quad (0,1,\pm \tau^{-1}, \pm \tau).$$ Finally, it is at angle $\pi/5$ to the lines spanned by the vectors
$$(\tau, \pm \tau^{-1}, \pm 1, 0), \quad (\tau, \pm 1, 0, \pm \tau^{-1}), \quad \text{and} \quad (\tau,0,\pm \tau^{-1}, \pm 1)$$ and at angle $2 \pi /5$ to the lines spanned by the vectors
$$(\tau^{-1}, \pm 1, \pm \tau, 0), \quad (\tau^{-1}, \pm \tau, 0 , \pm 1), \quad \text{and} \quad (\tau^{-1},0, \pm 1, \pm \tau).$$ It follows that the only $2$-flats in $T$ are of type $A_2$, $A_1 \times A_1$, and $G(5,5,2)$. 

We begin by analyzing the $A_2$'s. First we observe that the span of $(1,0,0,0)$ and a vector of the form $(1,(\pm i)^p,(\pm j)^p,(\pm k)^p)$ has the property that its orthogonal complement does not contain a line of $T$. This produces $60$ flats of type $A_2$ containing $(1,0,0,0)$ with this property. The remaining $20$ lines of $T$ at angle $\pi/3$ to $(1,0,0,0)$ account for $10$ flats of type $A_2$ with the property that the orthogonal complement is another flat of type $A_2$. The counts of these two types of flats in (b) and (c) follow the usual argument. The fact that the $A_2$ flats of one of these two types are all conjugate to one another follows by using the $W(H_3)$-action on the set of lines orthogonal to $(1,0,0,0)$. 

Now we analyze the $A_1 \times A_1$'s. There is one of these for each line orthogonal to $(1,0,0,0)$, and since the set of them spans a flat of type $H_3$ they are all conjugate to one another. So each line of $T$ is contained in $15$ flats of type $A_1 \times A_1$, all such flats are conjugate to one another, and the count in part (a) follows the usual pattern.

Finally we deal with the flats of type $G(5,5,2)$ containing $(1,0,0,0)$. There are $6$ of these, accounting for the $24$ lines of $T$ at angle $\pi/5$ or $2 \pi /5$ with $(1,0,0,0)$. They are all conjugate by the action of $W(H_3)$, and the claims in (d) follow.
\end{proof}

\subsection{The $3$-flats}

\begin{lemma}
The $3$-flats of $T$ consist of
\begin{itemize}
\item[(a)] $2^2 \cdot 3^2 \cdot 5$ flats of type $H_3$, which are the orthogonal complements of the lines of $T$ and form a single $W(T)$-orbit,
\item[(b)] $2^3 \cdot 3^2 \cdot 5^2$ flats of type $A_2 \times A_1$, comprising a single $W(T)$-orbit,
\item[(c)] $2^3 \cdot 3^3 \cdot 5$ flats of type $G(5,5,2) \times A_1$, comprising a single $W(T)$-orbit,
\item[(d)] $2^5 \cdot 3^3$ flats of type $G(5,5,3)$, comprising a single $W(T)$-orbit,
\item[(e)] $2^5 \cdot 5^3$ flats of type $G(3,3,3)$, comprising a single $W(T)$-orbit,
\item[(f)] a $W(T)$-orbit consisting of $2^2 \cdot 3^2 \cdot 5^2$ flats of type $A_3$, characterized by the property that an $A_2$ contained in one of them has orthogonal complement another flat of type $A_2$,
\item[(g)] a $W(T)$-orbit consisting of $2^2 \cdot 3^3 \cdot 5^3$ flats of type $A_3$, characterized by the property that an $A_2$ contained in one of them does not contain any line of $T$ in its orthogonal complement.
\end{itemize} The sum of the $\mu$-invariants of these flats is
$$2^2 \cdot 3^2 \cdot 5 \cdot 45+2^3 \cdot 3^2 \cdot 5^2 \cdot 2+2^3 \cdot 3^3 \cdot 5 \cdot 4+2^5 \cdot 3^3 \cdot 48 +2^5 \cdot 5^3 \cdot 16 + 2^2 \cdot 3^2 \cdot 5^2 \cdot 6+2^2 \cdot 3^3 \cdot 5^3 \cdot 6=207892$$ and the sum of their $e$-invariants is
$$2^2 \cdot 3^2 \cdot 5 \cdot 45+2^3 \cdot 3^2 \cdot 5^2 \cdot 2+2^3 \cdot 3^3 \cdot 5 \cdot 4+2^5 \cdot 3^3 \cdot 72+2^5 \cdot 5^3 \cdot 20+2^2 \cdot 3^2 \cdot 5^2 \cdot 6+2^2 \cdot 3^3 \cdot 5^3 \cdot 6=244628.$$
\end{lemma}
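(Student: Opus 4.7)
The plan is to apply the pair-counting strategy used repeatedly throughout the paper. For each of the four $W(T)$-orbits of $2$-flats established in the preceding lemma, I fix a representative $X$, classify the lines of $T$ not contained in $X$ according to the type of $3$-flat they generate together with $X$, and then convert per-$X$ counts into total $3$-flat counts using the standard incidence identity
\begin{equation*}
N_S \cdot a_{S,T} = N_T \cdot b_{S,T},
\end{equation*}
where $N_S, N_T$ are the total numbers of $2$-flats of type $S$ and $3$-flats of type $T$, $a_{S,T}$ is the number of type-$T$ $3$-flats containing a fixed $2$-flat of type $S$, and $b_{S,T}$ is the number of type-$S$ $2$-flats contained in a fixed $3$-flat of type $T$.

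I would begin with item (a). The orthogonal complement in $\HH^4$ of the line $(1,0,0,0)\HH$ is the hyperplane $x_1=0$, and direct inspection shows the $45$ lines of $T$ lying in it form an $H_3$ subsystem of $H_4 \subset T$. By $W(T)$-transitivity on lines, the orthogonal complement of every line of $T$ is an $H_3$-flat, yielding a single orbit of $180$ such flats; since the angles $\pi/5$ and $2\pi/5$ needed to generate $H_3$ occur only among the $H_4$-type lines of $T$, these are all the $H_3$-flats. For items (c) and (d), I use the $G(5,5,2)$ $2$-flats: adjoining a line in the orthogonal complement (which is itself a $G(5,5,2)$ by the preceding lemma) produces the flats of type $G(5,5,2) \times A_1$ in (c), while the remaining non-orthogonal lines partition into flats of types $H_3$ and $G(5,5,3)$, contributing to items (a) and (d). For items (b), (e), (f), and (g) I analyze each of the two $W(T)$-orbits of $A_2$-flats separately: by computing the GS decomposition of a representative of each orbit, the lines outside the given $A_2$ are partitioned into those generating flats of type $H_3$, $G(3,3,3)$, $A_3$, or $A_2 \times A_1$. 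The $A_2$-orbit whose orthogonal complement is another $A_2$ yields the $A_3$-flats of type (f), and the $A_2$-orbit whose orthogonal complement contains no line of $T$ yields the $A_3$-flats of type (g). Finally, adjoining a line to an $A_1 \times A_1$ representative provides a consistency check that no $3$-flat type has been missed.

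The main obstacle is verifying the partition of $A_3$-flats into the two $W(T)$-orbits (f) and (g): one must check that within any single $A_3$-flat of $T$, all four of its $A_2$-subflats lie in a common $W(T)$-orbit, so that the distinguishing property is well-defined on $A_3$-flats and not merely on incident pairs. This follows from the Bellamy--Schmitt--Thiel characterization of parabolic subgroups, which also yields the single-orbit statements in items (b)--(e). The count of $3$-flats of each type is then obtained mechanically from the classification data above by the double-counting identity. Summing $\mu$- and $e$-invariants uses their multiplicativity and the standard values $\mu(H_3)=e(H_3)=45$, $\mu(A_3)=e(A_3)=6$, $\mu(G(3,3,3))=16$, $e(G(3,3,3))=20$, $\mu(G(5,5,3))=48$, $e(G(5,5,3))=72$, $\mu(A_2 \times A_1)=e(A_2 \times A_1)=2$, and $\mu(G(5,5,2) \times A_1)=e(G(5,5,2) \times A_1)=4$, which reproduce the sums stated in the lemma.
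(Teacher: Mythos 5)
Your proposal follows essentially the same strategy as the paper's proof: fix a representative of each $W(T)$-orbit of $2$-flats, classify the $3$-flats generated by adjoining each remaining line of $T$, convert to global counts by double-counting incidences, and settle the orbit statements (including the well-definedness of the $A_3$ dichotomy in (f)/(g)) by appeal to \cite{BST}. The only differences are organizational: the paper extracts the $H_3$ and $A_3$ counts from the lines adjoined to the $A_1\times A_1$ spanned by $(1,0,0,0)$ and $(0,1,0,0)$ (finding two $H_3$'s, two type-(f) and thirty type-(g) $A_3$'s through it) rather than from the $A_2$-incidences you propose, but your invariant values $\mu(H_3)=45$, $\mu(G(5,5,3))=48$, $e(G(5,5,3))=72$, etc., and the resulting sums all agree with the paper's.
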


\begin{proof}
We begin by determining the number of flats of type $A_2 \times A_1$. Each of these contains exactly one flat of type $A_2$ whose orthogonal complement contains at least one line of $T$, so it follows from the previous lemma that there are $3 \cdot 2^3 \cdot 3 \cdot 5^2=2^3 \cdot 3^2 \cdot 5^2$ of them, as claimed in (b). Since the orthogonal complement of an $A_2$ is another line system of type $A_2$, the $A_2 \times A_1$'s containing a given $A_2$ are a single orbit under the group generated by reflections in lines orthogonal to it, and it follows that these are a single $W(T)$-orbit. This gives (b).

The assertions about flats of type $G(5,5,2) \times A_1$ in (c) follow the same pattern: the flats of type $G(5,5,2) \times A_1$ containing a given $G(5,5,2)$ are in bijection with the lines of $T$ contained in its orthogonal complement; there are $5$ of these spanning another flat of type $G(5,5,2)$ and hence there are $2^3 \cdot 3^3 \cdot 5$ flats of type $G(5,5,2) \times A_1$ forming a single $W(T)$-orbit. 

Now we analyze the $3$-flats containing the $A_1 \times A_1$ of $T$ spanned by $(1,0,0,0)$ and $(0,1,0,0)$. First, each $H_3$ containing it has a unique line orthogonal to the given $A_1 \times A_1$. There are only two such lines in $T$: those spanned by $(0,0,1,0)$ and $(0,0,0,1)$, and each of them does indeed produce an $H_3$ containing our $A_1 \times A_1$. Thus there are precisely two $H_3$'s containing a given $A_1 \times A_1$. From the preceding counts there are four flats of each of the type $A_2 \times A_1$ and $G(5,5,2) \times A_1$ containing our $A_1 \times A_1$. The $A_2 \times A_1$'s are spanned by our $A_1 \times A_1$ together with the vectors of the form $(0,1,\pm \tau^{-1},\pm \tau)$ (accounting for two of them) and of the form $(1,0,\pm \tau,\pm \tau^{-1})$ (accounting for the other two). The $G(5,5,2) \times A_1$'s are spanned by our $A_1 \times A_1$ together with the vectors of the form $(\tau,0,\pm \tau^{-1}, \pm 1)$ and $(\tau^{-1},0,\pm 1,\pm \tau)$ (accounting for two of them) and of the form $(0,\tau,\pm 1,\pm \tau^{-1})$ and $(0,\tau^{-1},\pm \tau, \pm 1)$ (accounting for the other two).

The remaining lines of $T$ that are not in any of the $3$-flats containing $(1,0,0,0)$ and $(0,1,0,0)$ accounted for so far are those spanned by the vectors of the form $(1,\pm 1,\pm 1,\pm 1)$ and $(1,(\pm i)^p,(\pm j)^p,(\pm k)^p)$ for $p \in I$. The first sort account for two $A_3$'s with the property that an $A_2$ contained in one of them has orthogonal complement another $A_2$, and the second sort account for $30$ flats of type $A_3$ with the property that an $A_2$ contained in one of them is not orthogonal to any line of $T$. 

Now the counts of the flats in (a), (f), and (g) follows, as well as the fact that the flats of type $H_3$ are a single $W(T)$-orbit, and that each of them is the orthogonal complement of a line of $T$. Moreover, it also follows that the flats of type $A_3$ with the property that the $A_2$'s they contain have orthogonal complement another flat of type $A_2$ are a single $W(T)$-orbit.

Now we consider the flat of type $A_2$ spanned by $(1,0,0,0)$ and $(1,1,1,1)$. The orthogonal complement of this is another flat of type $A_2$. The counts we have already established imply that this flat is contained in $3$ flats of type $H_3$, $3$ flats of type $A_2 \times A_1$, and $6$ flats of type $A_3$. Together these account for all the lines of $T$ except for the $120$ lines of the form $(1,(\pm i)^p,(\pm j)^p, (\pm k)^p)$ for $p \in I$, which produce $20$ flats of type $G(3,3,3)$ containing our $A_2$. 

Let $X$ be the flat of type $G(5,5,2)$ spanned by $(1,0,0,0)$ and $(\tau^{-1},1,\tau,0)$. The orthogonal complement of this flat is another flat of type $G(5,5,2)$, spanned by the vectors $(0,0,0,1)$ and $(0,\tau,-1,\tau^{-1})$. It follows that this flat is contained in five flats of type $G(5,5,2) \times A_1$, all conjugate to one another, and five flats of type $H_3$. These account for all the lines of $T$ except for those spanned by vectors of the form $(1,(\pm i)^p,(\pm j)^p,(\pm k)^p)$. There are $120$ such lines, partitioned into $12$ subsets each producing a $G(5,5,3)$ containing our $G(5,5,2)$. The count in part (d) follows from this.

Finally we consider the flat of type $A_2$ spanned by $(1,0,0,0)$ and $(1,i,j,k)$, which does not contain any line of $T$ in its orthogonal complement. This flat is therefore not contained in any flat of type $A_2 \times A_1$ or $H_3$. It follows from our count of the flats of type $A_3$ in (g) that it contained in $15$ such $A_3$'s, and from our count of the flats of type $G(5,5,3)$ that it is contained in $6$ of them. Together these account for $117$ lines of $T$, leaving $60$ lines partitioned into $10$ subsets each producing a flat of type $G(3,3,3)$. From this and our calculation of the number of $G(3,3,3)$'s containing the other sort of $A_2$, it follows that if $x$ is the number of flats of $T$ of type $G(3,3,3)$ then we have
$$12x=10 \cdot 2^4 \cdot 3^2 \cdot 5^2+20 \cdot 2^3 \cdot 3 \cdot 5^2 \quad \implies \quad x=4000.$$ This establishes the count in (e). Finally, all the remaining statements about conjugacy follow from the results of \cite{BST} (though we could, with a bit more effort, establish them in a computer-free fashion), and the calculation of the sum of the $\mu$-invariants is a consequence of the counts we have achieved.

\end{proof}

\subsection{The $\mu$-invariant and Poincar\'e polynomial} 

The preceding results together with $|W(T)|=2592000$ imply that the $\mu$- and $e$-invariants of $T$ are
$$\mu(T)=207892-10614+180-1=197457 \quad \text{and} \quad e(T)=2592000-244628-10614-180-1=2336577.$$ This shows $p_W(t)$ and $c_W(t)$ are as claimed in the introduction.

\section{The quaternionic reflection group of type $U$}

\subsection{} The largest dimension in which an exceptional quaternionic (non-complex) reflection group exists is $5$. Here we will describe this group. First, we will abbreviate by writing

$$\omega=\frac{1}{2}(-1+i+j+k).$$

According to Cohen, the group $W(U)$ is generated by reflections in the hyperplanes orthogonal to the $165$ lines spanned by the following vectors:

$$(1,0,0,0,0), \ (0,1,0,0,0), (0,0,1,0,0), \ (0,0,0,1,0), \ (0,0,0,0,1),$$

$$(0,1,\omega,\omega,1), \ (0,1, i \omega i^{-1},k \omega k^{-1},i), \ (0,1, j \omega j^{-1}, i \omega i^{-1},j), \ (0,1,k \omega k^{-1},j \omega j^{-1},k)$$ and all cyclic permutations and arbitrary sign changes of these. 

We remind the reader that we are writing vectors as row vectors only to save space. With this in mind, the unitary matrix

$$\left( \begin{matrix} \frac{1}{\sqrt{2}} & -\omega^2/\sqrt{2} & 0 & 0 & 0 \\ 
- \frac{1}{\sqrt{2}} & -\omega^2/\sqrt{2} & 0 & 0 & 0 \\  
 0& 0  & -\omega^2/\sqrt{2} &  \frac{1}{\sqrt{2}} & 0 \\
 0& 0  & \omega^2/\sqrt{2} &  \frac{1}{\sqrt{2}} & 0 \\ 
 0 & 0 & 0 & 0 & -\omega^2 \end{matrix} \right) $$ transforms the set $U$ of lines into the set of lines spanned by the following collection of vectors: the line system $D_4$, embedded in the first four coordinates, the line spanned by the vector $\left( \begin{matrix} 0 & 0 & 0 & 0 & 1 \end{matrix} \right)$, and the images by $W(D_4)$ of the lines spanned by the following vectors:
 
 $$\left( \begin{matrix} 1, & i, & j, & k, & 0 \end{matrix} \right), \quad \left( \begin{matrix} 1, & 1, & 1, & \omega-\omega^2, & \sqrt{2} \end{matrix} \right),$$
 
 $$\left( \begin{matrix} 1, & 1, & 1, & i-j-k, & i \sqrt{2} \end{matrix} \right), \quad \left( \begin{matrix} 1, & 1, & 1, & -i+j-k, & j \sqrt{2} \end{matrix} \right), \quad \text{and} \quad \left( \begin{matrix} 1, & 1, & 1, & -i-j+k, & k \sqrt{2} \end{matrix} \right). $$ In most of this section we will use this realization of $U$. It has the advantage of clearly displaying $S_1$ as the orthogonal complement of a line of $U$, and at the same time clearly displaying the complex line system $K_5$ as a subsystem of $U$ (see for instance chapter 7 of Lehrer and Taylor \cite{LeTa} for the relevant realization of $K_5$).

 \subsection{The flats of $U$} We note that the orthogonal complement to the vector $(0,0,0,0,1)$ is the line system of type $S_1$. Thus the $W(U)$ orbit of this system consists of $165$ four-dimensional flats all of type $S_1$. We now begin our calculation of the Poincar\'e polynomial with a description of the set of $2$-flats of the arrangement of $U$.
 
\begin{lemma}
There is a single $W(U)$-orbit of flats of $U$ of type $A_2$, consisting of $5 \cdot 11 \cdot 64$ flats, and a single $W(U)$-orbit of flats of type $A_1 \times A_1$, consisting of $2 \cdot 27 \cdot 5 \cdot 11$ flats. The sum of the $\mu$-invariants of these is equal to the sum of their $e$-invariants, $$2 \cdot 5 \cdot 11 \cdot 64+2 \cdot 27 \cdot 5 \cdot 11=2 \cdot 5 \cdot 11 \cdot 91=10010.$$
\end{lemma}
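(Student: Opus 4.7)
The plan is to exploit the fact that $U$ is a $3$-system (as asserted in the section introducing $3$-systems, via Cohen's classification), so every pair of distinct lines of $U$ makes angle $\pi/2$ or $\pi/3$. Consequently every $2$-flat of $U$ is either of type $A_1\times A_1$ (spanned by two orthogonal lines and containing no other lines of $U$) or of type $A_2$ (three lines at pairwise angle $\pi/3$); no other $2$-flat type is possible. This already settles the classification of types; the remaining work is the enumeration and the orbit count.

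For the enumeration I would fix the reference line $\ell_0$ spanned by $(0,0,0,0,1)$ in the coordinate realization introduced at the beginning of this section. By the identification made there, $\ell_0^\perp$ carries the line system of type $S_1$, which contains $36$ lines. Hence exactly $36$ of the $164$ other lines of $U$ are perpendicular to $\ell_0$ and the remaining $128$ are at angle $\pi/3$ to it. Each $A_1\times A_1$ through $\ell_0$ contributes one perpendicular partner, and each $A_2$ through $\ell_0$ contributes two partners at angle $\pi/3$; therefore $\ell_0$ lies in $36$ flats of type $A_1\times A_1$ and in $128/2=64$ flats of type $A_2$. Since $W(U)$ acts transitively on the $165$ lines of $U$ (a feature of the line system built into Cohen's description and already used in the introduction of this section), these local counts hold at every line. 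Double-counting incident pairs $(\ell,X)$ with $\ell\subset X$ then gives $165\cdot 36/2=2\cdot 27\cdot 5\cdot 11$ flats of type $A_1\times A_1$ and $165\cdot 64/3=5\cdot 11\cdot 64$ flats of type $A_2$.

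For the single-orbit claims I would invoke \cite{BST}, exactly as in the $S_3$ and $T$ sections: two rank-$2$ parabolic subgroups of $W(U)$ of the same isomorphism type are $W(U)$-conjugate. Alternatively one can argue directly: the pointwise stabilizer of $\ell_0$ acts on $\ell_0^\perp$ as $W(S_1)$, which is transitive on the $36$ lines of $S_1$, so the $36$ flats of type $A_1\times A_1$ through $\ell_0$ form one orbit under this stabilizer; transitivity on the $64$ flats of type $A_2$ through $\ell_0$ can be handled in the same spirit by tracking the induced action of $W(S_1)$ on the lines of $U$ at angle $\pi/3$ to $\ell_0$. Either route establishes that each type forms a single $W(U)$-orbit.

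Finally, using $\mu(A_1\times A_1)=e(A_1\times A_1)=1$ and $\mu(A_2)=e(A_2)=2$ (from the $A_2$ and $A_1^{\times 2}$ examples computed earlier), the sum of $\mu$-invariants of the $2$-flats works out to
\[
1\cdot 2\cdot 27\cdot 5\cdot 11+2\cdot 5\cdot 11\cdot 64=2\cdot 5\cdot 11\cdot(27+64)=2\cdot 5\cdot 11\cdot 91=10010,
\]
agreeing with the stated value, and the same computation gives the $e$-invariant sum since all the $2$-flats are real. The main obstacle is the transitivity statement for the $A_2$-flats: once one accepts the \cite{BST} conjugacy result this is immediate, but a self-contained proof requires a direct analysis of how $W(S_1)$ permutes the $128$ lines of $U$ at angle $\pi/3$ to $\ell_0$, which is the kind of explicit computation the paper typically replaces by a citation to \cite{BST}.
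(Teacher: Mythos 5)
Your proposal is correct and follows essentially the same route as the paper: fix the line $\ell_0=(0,0,0,0,1)$, use the $S_1$ structure of $\ell_0^\perp$ to split the remaining $164$ lines into $36$ orthogonal and $128$ at angle $\pi/3$, double-count incident pairs, and deduce the orbit claims from transitivity of $W(S_1)$ on the lines orthogonal to $\ell_0$ and on those at angle $\pi/3$ to it. The only difference is that the paper carries out the explicit reflection computation (applying the reflection in $(1,i,j,k,0)$ to $(1,1,1,\omega-\omega^2,\sqrt{2})$, plus cyclic symmetry and $W(D_4)\leq W(S_1)$) that you flag as the remaining obstacle, rather than citing \cite{BST} for this particular step.
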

\begin{proof}
First we count the number of each type of $2$-flat. We note that the angle between two lines of $U$ is either $\pi /2$ or $\pi /3$. Supposing now that $\ell_0$ is the line spanned by the vector $(0,0,0,0,1)$, there are $36$ lines orthogonal to it (the lines of $S_1$) and the remaining $128$ lines of $U$ are therefore at angle $\pi /3$ to it. It follows that there are $36$ flats of type $A_1 \times A_1$ containing $\ell_0$ and $64$ flats of type $A_2$ containing $\ell_0$. Letting $x$ be the number of flats of type $A_1 \times A_1$, it follows that the number of pairs $(\ell,X)$ consiting of a line $\ell$ of $U$ and a flat $X$ of type $A_1 \times A_1$ containing $\ell$ is $165 \cdot 36 = 2x$, so that $x=18 \cdot 165$ as claimed. Similarly, if we now let $x$ be the number of flats of type $A_2$, we have $165 \cdot 64=3x$, implying $x=64 \cdot 55$ as claimed.

The group $W(S_1)$ acts transitively on its set of lines. It follows from this that all the $A_1 \times A_1$'s containing $\ell_0$ are conjugate to one another, and since $W(U)$ acts transitively on its lines we conclude that all $A_1 \times A_1$'s are conjugate to one another. The group $W(S_1)$ also acts transitively on the lines at angle $\pi/3$ to $\ell_0$. To prove this, we compute the effect of the reflection $r$ with root line $(1,i,j,k,0)$ on the vector $(1,1,1,\omega-\omega^2,\sqrt{2})$. The result is
$$r(1,1,1,\omega-\omega^2,\sqrt{2})=(j,1-i+k,-j,j,\sqrt{2}),$$ which is proportional to $(-1,-i+j-k,1,-1,j\sqrt{2})$. It follows that the lines spanned by the vectors $(1,1,1,\omega-\omega^2,\sqrt{2})$ and $(1,1,1,-i+j-k,j\sqrt{2})$ are in the same $W(S_1)$-orbit. By cyclic symmetry in $i,j,k$ the lines spanned by the vectors $(1,1,1,i-j-k,i\sqrt{2})$ and $(1,1,1,-i-j+k,k\sqrt{2})$ also belong to this orbit, and our claim follows from the fact that $W(D_4) \leq W(S_1)$.
\end{proof}

Before moving on the the $3$-flats we record the GS decomposition of $U$ with respect to the $2$-flat of type $A_2$ spanned by the root lines $a=(1,-1,0,0,0)$, $b=(1,0,-1,0,0)$, and $c=(0,1,-1,0,0)$. The set $\Delta$ of lines of $U$ orthogonal to all three consists of lines spanned by vectors with first three coordinates equal. In other words $\Delta$ consists of the nine lines spanned by the vectors:
$$(1,1,1,\omega-\omega^2,\sqrt{2}), \quad (-1,-1,-1,-\omega+\omega^2,\sqrt{2}), \quad (1,1,1,i-j-k,i \sqrt{2}), \quad (-1,-1,-1,-i+j+k,i \sqrt{2}), $$

$$(1,1,1,-i+j-k,j \sqrt{2}), \quad (-1,-1,-1,i-j+k,j \sqrt{2}), \quad (1,1,1,-i-j+k,k \sqrt{2}), \quad (-1,-1,-1,i+j-k,k \sqrt{2}), $$ and $(0,0,0,0,1)$. This is a $3$-system in a $3$-dimensional space with the property that it contains a line $(0,0,0,0,1)$ not orthogonal to any other of its lines. It follows that this is a $G(3,3,3)$. 

Now we specify the set $\Lambda$ of $72$ lines of $U$ not orthogonal to any of $a,b,$ and $c$. This consists of the $W(D_4)$-orbit (acting on the first four coordinates) of the vector $(1,i,j,k,0)$ together with the lines spanned by the vectors in the $S_3$-orbits (acting by permuting the first three coordinates) of 
$$(1,-1,\omega-\omega^2,-1,\sqrt{2}), \quad (1,-1,-\omega+\omega^2,1,\sqrt{2}), \quad (1,-1,i-j-k,-1,i \sqrt{2}), \quad (1,-1,-i+j+k, 1, i \sqrt{2}), $$
$$(1,-1,-i+j-k,-1,j \sqrt{2}), \quad (1,-1,i-j+k,1,j \sqrt{2}), \quad (1,-1,-i-j+k,-1,k \sqrt{2}), \quad (1,-1,i+j-k,1,k \sqrt{2}) $$

Finally we observe that the set $\Gamma_a$ of lines of $U$ that are perpendicular to $a$ but not to $b$ or $c$ consists of the $3$ lines spanned by the vectors

$$(1,1,0,0,0), \quad (0,0,1,1,0), \quad (0,0,1,-1,0)$$ together with the $24$ lines spanned by
$$(1,1,x,1,y\sqrt{2}), \ (1,1,-1,-x,y \sqrt{2}), \ (1,1,-x,-1,y\sqrt{2}), \ (-1,-1,x,1,y\sqrt{2}), \ (-1,-1,1,x,y\sqrt{2}), \ (-1,-1,-x,-1,y \sqrt{2}),$$ where $x$ ranges over the set $\omega-\omega^2,i-j-k,-i+j-k,-i-j+k$ and $y=1$ if $x=\omega-\omega^2$, $y=i$ if $x=i-j-k$, $y=j$ if $x=-i+j-k$, and $y=k$ if $x=-i-j+k$.

For the $3$-flats we have the following:

\begin{lemma}
There is a unique $W(U)$-orbit of each of the following types of $3$-flats: 
\begin{itemize}
\item[(a)] $16 \cdot 27 \cdot 5 \cdot 11$ flats of type $A_3$ with $\mu$-invariant $6$,
\item[(b)] $64 \cdot 5 \cdot 11$ flats of type $G(3,3,3)$ with $\mu$-invariant $16$ and $e$-invariant $20$, 
\item[(c)] $64 \cdot 9 \cdot 5 \cdot 11$ flats of type $A_2 \times A_1$ with $\mu$-invariant $2$, and
\item[(d)] $2 \cdot 27 \cdot 5 \cdot 11$ flats of type $A_1 \times A_1 \times A_1$ with $\mu$-invariant $1$.
\end{itemize} Moreover, the flats of type $A_1 \times A_1 \times A_1$ are precisely the orthogonal complements of the flats of type $A_1 \times A_1$, and the flats of type $G(3,3,3)$ are precisely the orthogonal complements of the flats of type $A_2$. The sum of the $\mu$-invariants of these flats is
$$32 \cdot 81 \cdot 5 \cdot 11+1024\cdot 5 \cdot 11+128\cdot 9 \cdot 5 \cdot 11+2\cdot 27 \cdot 5 \cdot 11=265210$$ and the sum of their $e$-invariants is
$$32 \cdot 81 \cdot 5 \cdot 11+256\cdot 25 \cdot 11+128\cdot 9 \cdot 5 \cdot 11+2\cdot 27 \cdot 5 \cdot 11=279290.$$
\end{lemma}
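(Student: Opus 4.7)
The plan is to follow the pattern of the earlier rank-$3$ counts. Since $U$ is a $3$-system (all pairwise angles between its lines are $\pi/2$ or $\pi/3$), every rank-$3$ parabolic must be one of $A_1 \times A_1 \times A_1$, $A_2 \times A_1$, $A_3$, or $G(3,3,3)$ by Cohen's classification, so the list of types in the lemma is exhaustive. The counts of the last three types then follow directly from the GS decomposition recorded just before the lemma, combined with double-counting pairs $(X,Y)$ where $X$ is an $A_2$-flat and $Y$ is a $3$-flat of the prescribed type containing it.

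Specifically, the data $|\Delta|=9$, $|\Lambda|=72$, $|\Gamma_a|=27$ computed just above for the $A_2$ spanned by $a,b,c$, combined with the GS Lemma of Section~3, say that every $A_2$-flat lies inside exactly $|\Delta|=9$ flats of type $A_2 \times A_1$, exactly $|\Lambda|/6=12$ flats of type $G(3,3,3)$, and exactly $|\Gamma_a|=27$ flats of type $A_3$. Together with the count $64\cdot 5\cdot 11$ of $A_2$-flats from the previous lemma and the standard counts of $A_2$-subflats ($1$ in an $A_2 \times A_1$, $4$ in an $A_3$, $12$ in a $G(3,3,3)$, as computed earlier in the small-example section), double counting yields the cardinalities (a), (b), and (c) at one stroke. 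The set $\Delta$ itself spans the orthogonal complement of $\mathrm{span}(a,b,c)$ and was already identified in that discussion as a $G(3,3,3)$, so the orthogonal-complement map realizes the claimed bijection between $A_2$-flats and $G(3,3,3)$-flats.

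The count (d) of $A_1 \times A_1 \times A_1$-flats requires a separate argument, since these flats contain no $A_2$. I would fix the representative $A_1 \times A_1$ spanned by $(1,-1,0,0,0)$ and $(1,1,0,0,0)$, whose $3$-dimensional orthogonal complement is the subspace $\{(0,0,z,w,v)\}$; direct inspection shows that the lines of $U$ inside it are exactly those spanned by $(0,0,1,1,0)$, $(0,0,1,-1,0)$, $(0,0,0,0,1)$, three mutually orthogonal lines. Hence the chosen $A_1 \times A_1$ is contained in exactly three $A_1 \times A_1 \times A_1$-flats (one for each choice of a third line), and since the previous lemma puts all $A_1 \times A_1$-flats in a single $W(U)$-orbit the same count holds universally. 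Double-counting pairs $(X,Y)$ with $X$ an $A_1 \times A_1$, $Y$ an $A_1 \times A_1 \times A_1$, and $X \subset Y$ (using that each $A_1 \times A_1 \times A_1$ contains exactly three $A_1 \times A_1$-subflats) then yields $2 \cdot 27 \cdot 5 \cdot 11$ flats of type $A_1 \times A_1 \times A_1$. The orthogonal-complement map from $A_1 \times A_1 \times A_1$-flats to $A_1 \times A_1$-flats is then injective between sets of equal cardinality, hence a bijection, giving the remaining orthogonal-complement claim.

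Single-orbit status of each type is inherited from the classification of parabolic subgroups in \cite{BST}, and the tabulated sums of $\mu$- and $e$-invariants are then a routine arithmetic check. The main obstacle, and also the likeliest source of computational error, is the explicit GS decomposition itself: correctly tabulating $\Delta$, $\Lambda$, and $\Gamma_a$ against the long defining list of $165$ vectors of $U$ requires patient bookkeeping, but once that data is in hand the remainder of the argument is formal.
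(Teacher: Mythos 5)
Your proposal is correct and follows essentially the same route as the paper: the GS decomposition data $(|\Delta|,|\Lambda|,|\Gamma_a|)=(9,72,27)$ plus double counting for the three types containing an $A_2$, a separate orthogonal-line computation for the $A_1^{\times 3}$ count, and the equal-cardinality argument for the orthogonal-complement bijections. The only notable difference is that you outsource all the single-orbit claims to \cite{BST}, whereas the paper derives most of them directly (e.g.\ transitivity of the $G(3,3,3)$ generated by the reflections in $\Delta$ on the $27$ lines of $\Gamma_a$ for the $A_3$ case), but the paper itself invokes \cite{BST} for such claims elsewhere, so this is acceptable.
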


\begin{proof}
We first count the $3$-flats of each type, beginning with the $3$-flats of type $A_3$. To count these we count pairs $X \subseteq Y$ where $X$ is of type $A_2$ and $Y$ is of type $A_3$. If $x$ is the number of flats of type $A_3$ we obtain, using the preceding GS decomposition and the fact that each $A_3$ contains four flats of type $A_2$, 

$$4x=27 \cdot 5 \cdot 11 \cdot 64 \quad \implies \quad x=16 \cdot 27 \cdot 5 \cdot 11,$$ as claimed.

Next we count the $3$-flats of type $G(3,3,3)$. Now setting $x$ equal to the number of these the same argument yields 

$$12x=12  \cdot 5 \cdot 11 \cdot 64 \quad \implies \quad x=5 \cdot 11 \cdot 64,$$ as claimed.

Now letting $x$ be the number of $3$-flats of type $A_2 \times A_1$ we get
$$x=9 \cdot 5 \cdot 11 \cdot 64,$$ establishing the formula in (c).

And finally, letting $x$ be the number of $3$-flats of type $A_1 \times A_1 \times A_1$, we observe that the orthogonal complement to the $A_1 \times A_1$ spanned by $(1,0,0,0,0)$ and $(0,1,0,0,0)$ in Cohen's realization of $U$ given above contains precisely the three lines spanned by $(0,0,1,0,0),$ $(0,0,0,1,0)$, and $(0,0,0,0,1)$, so that
$$3x=3 \cdot 2 \cdot 27 \cdot 5 \cdot 11 \quad \implies \quad x=2 \cdot 27 \cdot 5 \cdot 11,$$ establishing the formula in (d). 

Observing that the number of flats of type $G(3,3,3)$ is equal to the number of flats of type $A_2$ and that, by our description of the GS decomposition for such an $A_2$, the orthogonal complement of a flat of type $A_2$ is a flat of type $G(3,3,3)$, it follows that the flats of type $G(3,3,3)$ are all conjugate to one another and are precisely the orthogonal complements of the flats of type $A_2$. Likewise the flats of type $A_1 \times A_1 \times A_1$ are precisely the orthogonal complements of the flats of type $A_1 \times A_1$ and hence are all conjugate to one another. That the $A_2 \times A_1$'s are all conjugate to one another follows from the fact that the $A_2$'s are all conjugate, and the orthogonal complement of an $A_2$ is a $G(3,3,3)$, which acts transitively on its set of lines.  

It remains to establish that the $A_3$'s are a single $W(U)$-orbit. This can be done by again considering again the GS decomposition of $U$ with respect to the $A_2$ spanned by $(1,-1,0,0,0)$ and $(0,1,-1,0,0)$ described above: it suffices to prove that the group $G(3,3,3)$ generated by the reflections in the lines of $\Delta$ (those orthogonal to the given $A_2$) acts transitively on the $27$ lines of $\Gamma_a$. This a straightforward calculation.
\end{proof}

Finally, we describe the $4$-flats of $U$:

\begin{lemma}
The arrangement of the line system $U$ contains the following $4$-flats and no others:
\begin{itemize}
\item[(a)] $3 \cdot 5 \cdot 11$ flats of type $S_1$ with $\mu$-invariant $1521$ and $e$-invariant $4257$,
\item[(b)] $128 \cdot 5 \cdot 11$ flats of type $G(3,3,4)$ with $\mu$-invariant $168$ and $e$-invariant $240$,
\item[(c)] $128 \cdot 27 \cdot 11$ flats of type $A_4$ with $\mu$-invariant $24$,
\item[(d)] $64 \cdot 3 \cdot 5 \cdot 11$ flats of type $G(3,3,3) \times A_1$ with $\mu$-invariant $16$ and $e$-invariant $20$, and
\item[(e)] $16 \cdot 27 \cdot 5 \cdot 11$ flats of type $A_3 \times A_1$ with $\mu$-invariant $6$.
\end{itemize} The sum of the $\mu$-invariants of these flats is
$$1521 \cdot 3 \cdot 5 \cdot 11+168 \cdot 128 \cdot 5 \cdot 11+24\cdot 128 \cdot 27 \cdot 11+16 \cdot 64 \cdot 3 \cdot 5 \cdot 11+6 \cdot 16 \cdot 27 \cdot 5 \cdot 11=2657589$$ and the sum of their $e$-invariants is
$$4257 \cdot 3 \cdot 5 \cdot 11+240 \cdot 128 \cdot 5 \cdot 11+24\cdot 128 \cdot 27 \cdot 11+20 \cdot 64 \cdot 3 \cdot 5 \cdot 11+6 \cdot 16 \cdot 27 \cdot 5 \cdot 11=3658149.$$ Moreover, each type of $4$-flat above is a single $W(U)$-orbit.
\end{lemma}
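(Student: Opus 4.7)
The plan is to continue the pair-counting strategy used throughout the preceding sections: from the classification of $3$-flats of $U$ in the previous lemma, together with the small-example tables of Section~4, determine which $4$-flats arise by extending each type of $3$-flat by a single outside line, count them via the identity $n_Y^X \cdot N(Y) = n_X^Y \cdot N(X)$, and identify the $W(U)$-orbits using \cite{BST} combined with the transitivity properties of $S_1$, $G(3,3,4)$, $A_4$, and the various reducible line systems on their own lines.

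Item (a) is immediate: by the opening of this section the orthogonal complement of any line of $U$ is of type $S_1$, and the $W(U)$-orbit of a line has size $165$. For (b)--(e) the starting point will be a fixed $3$-flat $X$ of type $A_3$, and the proof will partition the $165 - 6 = 159$ outside lines of $U$ according to the $4$-flat they span together with $X$. The single line orthogonal to $X$ will contribute an $A_3 \times A_1$; the $30$ remaining lines of the unique $S_1$ containing $X$ will contribute that $S_1$; and the remaining $128$ outside lines should split into $8$ copies of $A_4$ (contributing $4$ new lines each, hence $32$ in total) and $8$ copies of $G(3,3,4)$ (contributing $12$ new lines each, hence $96$ in total), accounting exactly for $1 + 30 + 32 + 96 = 159$.

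Once this partition is verified, the internal subflat counts $n_{A_3}^{S_1} = 144$, $n_{A_3}^{G(3,3,4)} = 27$, $n_{A_3}^{A_4} = 5$, $n_{A_3}^{A_3 \times A_1} = 1$ from the earlier sections, combined with $N(A_3) = 16 \cdot 27 \cdot 5 \cdot 11$, produce the claimed totals $N(Y)$ in (b)--(e). The next step is to cross-check the result by repeating the partitioning argument starting from a $3$-flat of type $G(3,3,3)$ (which should admit $S_1$, $G(3,3,4)$, and $G(3,3,3) \times A_1$ extensions but neither $A_4$ nor $A_3 \times A_1$), and similarly from $A_2 \times A_1$ and $A_1 \times A_1 \times A_1$ starting points; each such partition must exhaust precisely the correct number of outside lines, simultaneously confirming consistency and ruling out any further $4$-flat type. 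The single-orbit claims will then follow from \cite{BST} together with the observation that the pointwise stabilizer of a fixed $3$-flat $X$ acts on the lines of $U$ in $X^\perp$ as the reflection group of $X^\perp$, whose orbits on its own lines were computed in the small-example sections. The $\mu$- and $e$-invariants of the irreducible constituents ($\mu(S_1) = 1521$, $e(S_1) = 4257$; $\mu(G(3,3,4)) = 168$, $e(G(3,3,4)) = 240$; $\mu(A_n) = e(A_n) = n!$; $\mu(G(3,3,3)) = 16$, $e(G(3,3,3)) = 20$) are recorded earlier, and multiplicativity yields the reducible cases, with the stated sums being direct arithmetic.

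The main obstacle will be the explicit enumeration of the $128$ ``generic'' outside lines for the fixed $A_3$, distinguishing those belonging to an $A_4$-extension from those belonging to a $G(3,3,4)$-extension, and verifying that the split is precisely $32 + 96$. The realization of $U$ given at the start of this section, which displays $S_1$ as the orthogonal complement of a coordinate line and contains an explicit $D_4$, together with the GS decomposition of $U$ with respect to an $A_2$ recorded just before the preceding lemma, are precisely the tools that make this enumeration feasible; without them the bookkeeping among the $165$ lines would be prohibitive.
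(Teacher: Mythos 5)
Your overall strategy---pair-counting incidences between $3$-flats and $4$-flats---is the paper's strategy, but your ordering of the computation leaves two genuine gaps. First, the count in (a) is not immediate. The orthogonal complements of the $165$ lines give at least $165$ flats of type $S_1$, but nothing you have said rules out an $S_1$-flat whose orthogonal complement is not a line of $U$. The paper closes this by showing that the $S_1$'s containing a fixed flat of type $A_1\times A_1\times A_1$ are in bijection with the lines of $U$ orthogonal to it (using the frame structure of $S_1$), that there are exactly two such lines, and then counting pairs: $36x = 2\cdot(2\cdot 27\cdot 5\cdot 11)$ forces $x=165$ and simultaneously shows every $S_1$ is a line-complement, hence a single orbit. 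Your ``unique $S_1$ containing $X$'' for an $A_3$-flat $X$ likewise depends on already knowing this total (the paper gets uniqueness from $165\cdot 144 = (16\cdot 27\cdot 5\cdot 11)\cdot x$), so as written your item (a) and your $1+30+\cdots$ partition both rest on an unproved count.

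Second, the step you yourself flag as the main obstacle---verifying that the $128$ generic lines outside a fixed $A_3$ split as $32$ lines in eight $A_4$'s and $96$ lines in eight $G(3,3,4)$'s---is exactly the step the paper arranges never to have to do by brute force. The paper first counts the $G(3,3,4)$ flats by partitioning the $156$ lines outside a fixed $G(3,3,3)$ (where the only possible extensions are $S_1$, $G(3,3,3)\times A_1$, and $G(3,3,4)$, and the first two incidences are already known, leaving $72=8\cdot 9$ lines and hence $4x=8\cdot 64\cdot 5\cdot 11$). Only then does it return to the $A_3$ partition, where the numbers $1$ (copies of $S_1$) and $8$ (copies of $G(3,3,4)$) containing a given $A_3$ are forced by the established totals, so the remaining $32$ lines must form eight $A_4$'s with no enumeration of individual lines required. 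If you reorder your argument this way---$A_1^{\times 3}$ first for $S_1$ and $A_3\times A_1$, then $G(3,3,3)$ for $G(3,3,4)$ and $G(3,3,3)\times A_1$, then $A_3$ for $A_4$, and finally $A_2\times A_1$ to exclude $A_2\times A_2$ (together with the observations that $A_1^{\times 4}$ and $A_2\times A_1^{\times 2}$ cannot occur because the span of an $A_1^{\times 3}$ and an orthogonal line is an $S_1$, and the complement of an $A_2$ is a $G(3,3,3)$ with no two orthogonal lines)---the ``prohibitive bookkeeping'' disappears and your proof becomes the paper's.
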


\begin{proof}
We begin with the count of the flats of type $S_1$. We have seen above that a flat of type $S_1$ contains $36$ flats of type $A_1 \times A_1 \times A_1$. On the other hand, we will check that each flat of $U$ of type $A_1 \times A_1 \times A_1$ is contained in precisely two flats of $U$ of type $S_1$. It follows that if the arrangement of type $U$ contains $x$ flats of type $S_1$, then $36x=2 \cdot 2 \cdot 27 \cdot 5 \cdot 11$ so that $x=165$ as claimed. To see that each $A_1 \times A_1 \times A_1$ is contained in precisely two $S_1$'s we observe that the $S_1$'s containing an $A_1 \times A_1 \times A_1$ are in bijection with the lines of $U$ that are orthogonal to it. In one direction, given an $S_1$ containing our $A_1 \times A_1 \times A_1$, there is a unique fourth line in $S_1$ that is orthogonal to the given triple of orthogonal lines. In the other directino, in Cohen's realization it is enough to check that for the span of $(1,0,0,0,0), (0,1,0,0,0),$ and $(0,0,1,0,0)$, each line of $U$ orthogonal to these three is contained in an $S_1$ that contains them, which is easy. This also proves that every $S_1$ is the orthogonal complement of a line of $U$, and hence that the $S_1$'s in $U$ form a single $W(U)$-orbit.

The other $3$-systems of rank $4$ that contain $A_1 \times A_1 \times A_1$ are $A_2 \times A_1 \times A_1$, $A_1^{\times 4}$, and $A_3 \times A_1$. We note that $A_1^{\times 4}$ does not occur in $U$ since the span of $A_1^{\times 3}$ together with either of the lines orthogonal to it is a copy of $S_1$. Likewise, the orthogonal complement of an $A_2$ of $U$ is a flat of type $G(3,3,3)$, which does not contain two orthogonal lines, so $A_2 \times A_1 \times A_1$ does not occur as a flat of $U$. This leaves only $A_3 \times A_1$. Since we have seen above that a $A_1 \times A_1 \times A_1$ is contained in two $S_1$'s accounting for a total of $66$ lines, the remaining $96$ lines must be accounted for by $24$ flats of type $A_3 \times A_1$ containing a given $A_1 \times A_1 \times A_1$. Thus if $x$ is the number of $A_3 \times A_1$'s in $U$, then $3x=24 \cdot 2 \cdot 27 \cdot 5 \cdot 11$ so that $x=16 \cdot 27 \cdot 5 \cdot 11$. Since this is equal to the number of $A_3$'s, it follows that each $A_3$ is contained in a unique $A_3 \times A_1$, and since there is one $W(U)$-orbit of $A_3$'s there is also a unique $W(U)$-orbit of $A_3 \times A_1$'s. 

Next we consider the $4$-flats in $U$ that contain a $G(3,3,3)$. The only $3$-systems of rank $4$ that contain a flat of type $G(3,3,3)$ are $S_1$, $G(3,3,4)$, and $G(3,3,3) \times A_1$. By using the usual counting argument we find that if $x$ is the number of $S_1$'s of $U$ containing a given $G(3,3,3)$ then we have $64 \cdot 5 \cdot 11 x=165 \cdot 64$ so that $x=3$. There is one $G(3,3,3) \times A_1$ containing a given $G(3,3,3)$ for each line of $L$ in its orthogonal complement; since this complement is a flat of type $A_2$ there are $3$ of these. Thus the total number of $G(3,3,3) \times A_1$'s is $3 \cdot 64 \cdot 5 \cdot 11$. Moreover, of the $156$ lines of $U$ in the complement of a given flat $X$ of type $G(3,3,3)$, the preceding calculation shows that $81$ of them are accounted for by $S_1$'s containing $X$ and $3$ are accounted for by $G(3,3,3) \times A_1$'s. The remaining $72$ lines must therefore be partitioned into $8$ sets of $9$ lines each, corresponding to $8$ flats of type $G(3,3,4)$ containing $X$. It follows that if there are $x$ total flats of type $G(3,3,4)$, then we have $4x=8 \cdot 64 \cdot 5 \cdot 11$, so that $x=128 \cdot 5 \cdot 11$. 

We have now achieved the desired count of the $4$-flats in the arrangement of $U$ in all cases except $A_4$ and $A_2 \times A_2$. We first analyze $A_4$. To do this we will first count the number of $A_4$'s containing a given flat $X$ of type $A_3$. There are a total of $159$ lines not in the given $A_3$. We have already seen that there is a unique flat of type $A_3 \times A_1$ containing each $A_3$. Since we know how many flats of type $G(3,3,4)$ and $S_1$ there are we can deduce the number of each that contains a given $A_3$: each flat of type $G(3,3,4)$ contains $27$ flats of type $A_3$ so $27 \cdot 128 \cdot 5 \cdot 11=16 \cdot 27 \cdot 5 \cdot 11 x$ where $x$ is the number of $G(3,3,4)$'s containing a given $A_3$. This shows that there are $x=8$ such $G(3,3,4)$'s. Likewise each flat of type $S_1$ contains $144$ flats of type $A_3$, so $165 \cdot 144=16 \cdot 27 \cdot 5 \cdot 11 x$ where now $x$ is the number of $S_1$'s containing a given $A_3$. Hence there is a unique $S_1$ containing each $A_3$. The $S_1$ accounts for $30$ additional lines, the $G(3,3,4)$'s account for $8 \cdot 12=96$ lines, and the $A_3 \times A_1$ for one additional line. Therefore the remaining $32$ lines must come from $8$ flats of type $A_4$ containing the given $A_3$. Finally it follows that the number $x$ of flats of type $A_4$ is determined by $8 \cdot 16 \cdot 27 \cdot 5 \cdot 11=5x$, so that $x=128 \cdot 27 \cdot 11$, achieving the count for the flats of type $A_4$. 

To finish the count we must show that there are no flats of type $A_2 \times A_2$. Any such flat contains a flat of type $A_2 \times A_1$. On the other hand, we can count the number of flats of each of the other types containing $A_2 \times A_1$, using the fact that we know how many there are of each in total. The results are: each $A_2 \times A_1$ is contained in $4$ flats of type $G(3,3,4)$, in $12$ flats of type $A_4$, in $3$ flats of type $A_3 \times A_1$, and in $4$ flats of type $G(3,3,3) \times A_1$. This accounts for all $161$ lines of $U$ that are not in the given $A_2 \times A_1$, so it cannot be contained in any additional $4$-flats and there are therefore no flats of type $A_2 \times A_2$.

According to \cite{BST}, page 13, each of the types of flats of dimension $4$ is a single conjugacy class, establishing the last claim.

\end{proof}

By combining these lemmas we obtain the $\mu$-invariant of $U$:

$$\mu(U)=1-165+10010-265210+2657589=2402225=5^2 \cdot 7^2 \cdot 37 \cdot 53.$$

Hence the Poincar\'e polynomial is

$$1+165t+10010t^2+265210 t^3+2657589t^4+2402225t^5=(1+t)(1+25t)(1+37t)(1+49t)(1+53t).$$ By page 320 of Cohen \cite{Coh}, the order of $W(U)$ is $27371520$, so the preceding calculations also imply that its codimension generating function is
$$c_W(t)=1+165t+10010t^2+279290t^3+3658149t^4+23423905t^5.$$

\def\cprime{$'$} \def\cprime{$'$}


\begin{thebibliography}{GGOR}


 \bibitem[BST]{BST} G. Bellamy, J. Schmitt, and U. Thiel, \emph{On parabolic subgroups of symplectic reflection groups}
Glasg. Math. J. 65 (2023), no. 2, 401–413.

\bibitem[CaGr]{CaGr}  L. Cartaya and S. Griffeth, \emph{Zero fibers of quaternionic quotient singularities},
Rev. Mat. Iberoam. 41 (2025), no. 4, 1461--1487.

\bibitem[Coh]{Coh}
 A. Cohen, \emph{Finite quaternionic reflection groups}, 
J. Algebra 64 (1980), no. 2, 293--324.
 
\bibitem[GRS]{GRS} L. Giordani, G. Röhrle, and J. Schmitt, \emph{Invariants in the cohomology of the complement of quaternionic reflection arrangements}, preprint
 
 \bibitem[LeTa]{LeTa} G. Lehrer and D. Taylor, \emph{Unitary reflection groups}, Austral. Math. Soc. Lect. Ser., 20,
Cambridge University Press, Cambridge, 2009, viii+294 pp.
ISBN: 978-0-521-74989-3
 
 \bibitem[OrSo]{OrSo} P. Orlik and L. Solomon, \emph{Unitary reflection groups and cohomology}, Invent. Math., 59, 77--94, 1980
 
 \bibitem[OrTe]{OrTe} P. Orlik and H. Terao, \emph{Arrangements of hyperplanes},
Grundlehren Math. Wiss., 300 [Fundamental Principles of Mathematical Sciences]
Springer-Verlag, Berlin, 1992, xviii+325 pp.
ISBN: 3-540-55259-6
 
\bibitem[Sch]{Sch} W. Schlieper, \emph{The cohomology of quaternionic hyperplane complements}, arXiv:1508.05418

\bibitem[Tay]{Tay} D. Taylor, \emph{Systems of imprimitivity for rank two quaternionic reflection groups}, preprint

\bibitem[Tit]{Tit} J. Tits, \emph{Quaternions over $\QQ(\sqrt{5})$, Leech’s lattice and the sporadic group of Hall-Janko}, J. Algebra 63 (1980), 56--75.

\bibitem[Wal]{Wal} S. Waldron, \emph{An elementary classification of the quaternionic reflection groups of rank two}, arxiv:2509.01849

 \end{thebibliography}
\end{document}